\documentclass[11pt,reqno,a4paper]{amsart}
\usepackage{amsfonts,amsmath,amsthm}
\usepackage{tikz}
\usepackage{times}

\definecolor{blau}{rgb}{0,0,0.75} %color for in-document links
\usepackage[pdftex]{hyperref}
\hypersetup{colorlinks,linkcolor=blau,citecolor=blue,urlcolor=blau}

\allowdisplaybreaks

\newtheorem{theorem}{Theorem}
\newtheorem{lemma}[theorem]{Lemma}
\newtheorem{coroll}{Corollary}
\newtheorem{prop}{Proposition}

\theoremstyle{definition}
\newtheorem{remark}{Remark}
\newtheorem{example}{Example}

%fallende Faktorielle
\newcommand{\fallfak}[2]{\ensuremath{#1^{\underline{#2}}}}

%natuerliche Zahlen
\newcommand{\N}{\ensuremath{\mathbb{N}}}
%rationale Zahlen

%komplexe Zahlen
\newcommand{\C}{\ensuremath{\mathbb{C}}}
%reelle Zahlen
\newcommand{\R}{\ensuremath{\mathbb{R}}}
%ganze Zahlen
\newcommand{\Z}{\ensuremath{\mathbb{Z}}}
\newcommand{\ith}[1]{\ensuremath{ {#1}\textsuperscript{th}}}

\DeclareMathOperator{\lemsine}{sl}

\DeclareMathOperator{\odeg}{deg}
\DeclareMathOperator{\erf}{erf}
\begin{document}

\author[M.~Kuba]{Markus Kuba}
\address{Markus Kuba\\
Institute of Applied Mathematics and Natural Sciences\\
University of Applied Sciences - Technikum Wien\\
H\"ochst\"adtplatz 5, 1200 Wien} %
\email{kuba@technikum-wien.at}

\author{Alois Panholzer}
\address{Alois Panholzer\\
Institut f{\"u}r Diskrete Mathematik und Geometrie\\
Technische Universit\"at Wien\\
Wiedner Hauptstr. 8-10/104\\
1040 Wien, Austria} \email{Alois.Panholzer@tuwien.ac.at}

\thanks{The second author was supported by the Austrian Science Foundation FWF, grant P25337-N23.}

\title[Combinatorial families of multilabelled increasing trees]{Combinatorial families of multilabelled increasing trees and hook-length formulas}

\keywords{Hook-length formulas, Bilabelled trees, Multilabelled trees, Increasing trees, Elliptic functions, Weierstrass-$\wp$ function, Differential equations}%
\subjclass[2000]{05C05, 05A15, 05A19} %

\begin{abstract}
In this work we introduce and study various generalizations of the notion of increasingly labelled trees, where the label of a child node is always larger than the label of its parent node, to multilabelled tree families, where the nodes in the tree can get multiple labels.

For all tree classes we show characterizations of suitable generating functions for the tree enumeration sequence via differential equations. Furthermore, for several combinatorial classes of multilabelled increasing tree families we present explicit enumeration results.
We also present multilabelled increasing tree families of an elliptic nature, where the exponential generating function can be expressed in terms of the Weierstrass-$\wp$ function or the lemniscate sine function.

Furthermore, we show how to translate enumeration formulas for multilabelled increasing trees into hook-length formul\ae{} for trees and present a general ``reverse engineering'' method to discover hook-length formul\ae{} associated to such tree families.
\end{abstract}

\maketitle

\begin{center}
  \large{\emph{Dedicated to Helmut Prodinger on the occasion of his 60th birthday}}
\end{center}

\section{Introduction\label{HookBijII-sec1}}

\subsection{Multilabelled increasing tree families}
Increasing trees or increasingly labelled trees are rooted labelled trees, where the nodes of a tree $T$ of size $|T| = n$ (where the size $|T|$ of a tree denotes the number of vertices of $T$) are labelled with distinct integers from a label set $\mathcal{M}$ of size $|\mathcal{M}|=n$ (usually, one chooses as label set the first $n$ positive integers, i.e., $\mathcal{M} = [n] := \{1, 2, \dots, n\}$) in such a way that the label of any node in the tree is smaller than the labels of its children. As a consequence, the labels of each path from the root to an arbitrary node in the tree are forming an increasing sequence, which explains the name of such a labelling.

Various increasing tree models turned out to be appropriate in order to describe the growth behaviour of quantities in various applications and occurred in the probabilistic literature, see \cite{MahSmy1995} for a survey collecting results prior 1995. E.g., they are used to describe the spread of epidemics, to model pyramid schemes, and as a simplified growth model of the world wide web.

First occurrences of increasing trees in the combinatorial literature were due to bijections to other fundamental combinatorial structures, e.g., binary increasing trees and more generally $(d+1)$-ary increasing trees of size $n$ are in bijection to permutations and so-called $d$-Stirling permutations of order $n$, respectively, see \cite{Fra1976,JanKubPan2011,Par1994,Sta1986} and references therein. A further example are increasingly labelled non-plane unary-binary trees, where it turned out that the number of such trees of size $n$ is twice the number of alternating permutations of order $n$, see \cite{Don1975,KuzPakPos1994}.

A first systematic treatment of increasing labellings (and related, so-called monotone labellings, where labels are not necessarily distinct) of trees is given by Pro\-din\-ger and Urbanek in \cite{ProUrb1983}, where in particular plane increasing trees (i.e., increasingly labelled ordered trees) of a given size could be enumerated. A fundamental study of increasing tree families yielding exact and asymptotic enumeration results as well as a distributional analysis of various tree parameters is given in \cite{BerFlaSal1992}, see also \cite{FlaSed2009} and references therein.

In above definition of increasing trees each node in the tree gets exactly one label. In this work we introduce and study several extensions of this concept to multilabelled tree families, i.e., where the nodes in the tree are equipped with a set or a sequence of labels. Whereas (unilabelled) increasing trees are studied extensively in the combinatorial and probabilistic literature, best to our knowledge the enumeration of increasingly multilabelled trees has not been addressed so far (apart from the author's work \cite{KubPan2012}, where a particular instance appears).

In bilabelled increasing trees or increasingly bilabelled trees each node in the tree gets a set of two labels and the labels of a child node are always larger than both of the labels of its parent node. Such a labelling has been introduced in \cite{KubPan2012} in order to provide a combinatorial explanation of a certain hook-length formula (a summation formula for the trees of a given size in the tree family considered, where the hook-lengths, i.e., the number of descendants, of the nodes in each tree are occurring) for unordered labelled trees. Here we will give a systematic treatment of families of bilabelled increasing trees, which relies on a general symbolic combinatorial description of such tree classes leading to an implicit characterization of the exponential generating function of the number of bilabelled increasing trees of size $n$ (whose nodes are bilabelled with distinct integers of the label set $[2n]$). As a consequence we will present new enumerative results, extending the known result~\cite{KubPan2012} concerning unordered bilabelled increasing trees, where our focus is here on tree classes yielding interesting enumeration formul\ae. E.g., for ordered bilabelled increasing trees we are able to express the generating function in terms of the antiderivative of the inverse error function, whereas for increasingly bilabelled $3$-bundled trees (increasing bilabellings of trees from a certain family of so-called generalized plane-oriented trees, see~\cite{KubPan2007}) we even get a simple closed-form enumeration result.

We also present several families of bilabelled increasing trees of an \emph{elliptic nature}. Combinatorial families whose generating functions are of an elliptic nature have occasionally appeared in the literature: Dumont~\cite{Dumont1979,Dumont1981}, Flajolet~\cite{Flajo1981}, Fran\c{c}on~\cite{FlaFran1989} and Viennot~\cite{Viennot1980} studied some models related to permutations. 
Panholzer and Prodinger~\cite{PanPro1998} uncovered the elliptic nature of fringe-balanced binary search trees. Flajolet et al.~\cite{FlaGabPek2005} discussed urn models whose history generating functions can be expressed in terms of the Weierstrass-$\wp$ function.
The Weierstrass-$\wp$ function also appeared recently in the work of Bouttier et al.~\cite{Boutt2003II}, and Drmota~\cite{Drmota2013}: they studied the support of the random measure ISE and its relation to embedded tree families. In this article we discuss families of increasingly bilabelled trees, whose generating functions can be expressed in terms of the Weierstrass-$\wp$ function or related elliptic functions such as the lemniscate-sine function. In particular, strict-binary bilabelled increasing trees and unordered bilabelled increasing trees with only even degrees are of an elliptic type. Furthermore, we show that arbitrary families of binary or ternary bilabelled increasing trees have an elliptic nature.

It is well known (see, e.g., \cite{KubPan2013}) that enumerative results for (unilabelled) increasing trees can be transferred into hook-length formul\ae{} for the corresponding tree families. Based on the fact that the number of increasing bilabellings of a given tree can be described by a nice product formula containing the hook-lengths of the nodes in the tree, we will show that the enumeration of bilabelled increasing trees gives rise to hook-length formul\ae{} in a natural way. In this context we also present a ``reverse engineering approach'', i.e., a general method for discovering hook-length formul\ae{} associated to families of bilabelled increasing trees. 

The concept of increasing bilabellings of trees can be extended in a natural way to $k$-labelled trees: in a $k$-labelled increasing tree (or increasingly $k$-labelled tree) of size $n$ a total of $k n$ distinct labels is distributed amongst the vertices of the tree such that each node gets a set of exactly $k$ labels and each label of a child node is larger than all labels of its parent node. Such labellings also lead to hook-length formul\ae{} and the before-mentioned reverse engineering approach could be extended to $k$-labelled increasing trees. 
We discuss trilabelled unordered increasing trees and relate the generating function 
with the solution of the so-called Blasius differential equation 
\[
y'''(z)+y''(z)y(z)=0,\qquad y(0)=0,\quad y'(0)=0,\quad \lim_{z\to\infty}y'(z)=1,
\]
arising in the study of the Prandtl-Blasius Flow~\cite{Blasius1908,BlasiusBoyd1999,BlasiusFinch2008,BlasiusHager2003}, providing a combinatorial interpretation of the coefficients of the Blasius function $y(z)$.
 
\smallskip 

The labellings introduced so far could be considered as ``label regular'', since each node in the tree gets the same number of labels. However, one can also consider non-regular labellings, where the number of labels a node can get is not fixed. Here it is natural to consider the number $m$ of distinct labels, which are distributed amongst the nodes in a tree, instead of the tree-size $n$. A free multilabelled increasing tree (or increasingly free multilabelled tree) is a tree of size $n$, where the $m = |\mathcal{M}| \ge n$ labels of the label set $\mathcal{M}$ are distributed amongst the nodes in the tree in such a way that each node in the tree gets a non-empty set of labels and each label of a child node is larger than all nodes of its parent node.
We are able to provide a recursive description of the number of free multilabelled increasing trees with a label set of size $m$ for general tree families and we provide explicit enumeration results for a few interesting instances. Although such non-regular labellings are no more directly amenable to hook-length formul\ae, we also give an extension of the notion of the hook-length of a node yielding corresponding results.

One can also consider such multilabelled increasing trees with restrictions on the number of labels a node can get, e.g., we can assume that each node in the tree can only hold up to two labels. Such unilabelled-bilabelled increasing tree families also yield recurrences for the number of trees with a label set of size $m$ and as an example we give an enumeration formula for unordered unilabelled-bilabelled increasing trees.

Finally, we shortly consider another concept of increasing multilabellings of trees, namely so-called $k$-tuple labelled increasing trees, where each node in a tree $T$ of size $n$ gets a $k$-tuple of labels, such that the $j$-th component of the labels of the nodes, with $1 \le j \le k$, are forming an increasing labelling of $T$ with label set $[n]$. The particular instance $k=2$ has been introduced in \cite{KubPan2012} to give a combinatorial explanation of a certain hook-length formula for unordered labelled trees, but also instances $k > 2$ naturally lead to hook-length formul\ae.

\subsection{Weighted ordered tree families\label{ssec:WeightedOrdTrees}}
In order to formulate combinatorial descriptions of the various multilabelled increasing tree families as well as to give hook-length formul\ae{} for them it is advantageous to state the fundamental results for a class of weighted trees, which is known in the literature as simple families of trees, see \cite{FlaSed2009}. The basic objects considered in this context are \emph{ordered trees}, also called 
\emph{planted plane trees}, i.e., rooted trees, where to each node $v$ there is attached a (possibly empty) sequence of child-nodes (thus the left-to-right order of the children is important). 
Throughout this paper we denote by $\mathcal{O}$ the family of ordered trees.
Different simply generated tree models are then obtained by considering weighted ordered trees, where each node $v$ in an ordered tree $T \in \mathcal{O}$ gets a certain weight factor depending on the out-degree of $v$, i.e., the number of children of $v$, and the weight of the tree $T$ is defined as the product of the weight factors of all of its nodes.

More precisely, we define a family $\mathcal{T}$ of \emph{weighted ordered trees} as follows. A sequence of non-negative numbers $(\varphi_{j})_{j \ge 0}$, with $\varphi_{0} > 0$, is used to define the weight $w(T)$ of any ordered tree $T \in \mathcal{O}$ by 
\begin{equation*}
  w(T) := \prod_{v \in T} \varphi_{\odeg(v)},
\end{equation*}
where $v$ ranges over all vertices of $T$ and $\odeg(v)$ is the out-degree of $v$.
The family $\mathcal{T}$ consists then of all ordered trees $T$ (or equivalently of all ordered trees $T$ with $w(T) \neq 0$) together with their weights $w(T)$, i.e., one might think of pairs $(T, w(T))$. 
Furthermore, $\mathcal{T}(n)$ denotes the family of weighted ordered trees of size $n$, i.e., all pairs $(T, w(T))$, with $|T| = n$;
more generally, for a family $\mathcal{C}$ of combinatorial objects, with $\mathcal{C}(n)$ we always denote the set of objects of $\mathcal{C}$ of size $n$.

It is well-known that many important combinatorial tree families such as ordered trees ($\varphi_{j} = 1, j \ge 0$), $d$-ary trees ($\varphi_{j} = \binom{d}{j}$), strict-binary trees ($\varphi_{0} = \varphi_{2} = 1$, and $\varphi_{j}=0$, otherwise) and $d$-bundled trees ($\varphi_{j} = \binom{j+d-1}{j}, j \ge 0$) are equivalent to such weighted tree models, where the degree-weights are chosen in an appropriate way, see, e.g., \cite{FlaSed2009,KubPan2013}.
Of course, one can also consider corresponding (uni)labelled tree families $\widetilde{\mathcal{T}}$, where the nodes of a tree of size $n$ are labelled with distinct integers of $[n]$; such trees can be considered as weighted instances of the family $\widetilde{\mathcal{O}}$ of (uni)labelled ordered trees. In particular, by choosing the degree-weights $\varphi_{j} = \frac{1}{j!}, j \ge 0$, this labelled weighted ordered tree model is equivalent to the family $\widetilde{\mathcal{U}}$ of unordered labelled trees. This justifies in this context the choice of this general tree model: all tree families considered later on can be considered as specifically multilabelled weighted ordered trees, i.e., trees from a weighted ordered tree family $\mathcal{T}$ (or the labelled counterpart $\widetilde{\mathcal{T}}$) are equipped with certain increasing multilabellings.

\subsection{Hook-length formul\ae}
Besides the enumerative interest in families of increasingly multilabelled trees, we present hook-length formul\ae{} associated to the various tree families. Given a rooted tree $T$, we call a node $u \in T$ a \emph{descendant} of node $v \in T$ if $v$ is lying on the unique path from the root of $T$ to $u$. The \emph{hook-length} $h_{v} := h(v)$ of a node $v \in T$ is defined as the number of descendants of $v$ including the node $v$ itself (i.e., it is the size of the subtree rooted at $v$). 

Various hook-length formul\ae{} for different tree families have been obtained recently, see, e.g., \cite{Chen2009,GesSeo2006,Han2008,Pos2009}.
In particular, Han~\cite{Han2008} developed a very versatile expansion technique for deriving
hook-length formul\ae{} for partitions and trees. Han's method for trees was extended by Chen et al.~\cite{Chen2009} and by the authors~\cite{KubPan2013}, 
which allows to determine the ``hook-weight function'' $\rho(n)$ itself from the considered generating function (or, when considering labelled tree families, the corresponding exponential generating function) 
\[
G(z) = \sum_{n \ge 1} \Big(\sum_{T \in \mathcal{T}(n)} \prod_{v \in T} \rho(h_{v})\Big) z^{n},
\]
with $\mathcal{T}(n)$ the set of trees of size $n$ of a family $\mathcal{T}$.
As a prominent example, Han's expansions technique can be used to give a simple proof of the hook-length formula 
\[
\sum_{T \in \mathcal{B}(n)} \prod_{v \in T} \left(1+\frac{1}{h_{v}}\right)= \frac{2^{n} (n+1)^{n-1}}{n!}
\] for the family of \emph{binary trees} $\mathcal{B}$ obtained by Postnikov~\cite{Pos2009}. 

Besides the search and derivation of hook-length formul\ae{} for trees a second important research aspect is to give combinatorial interpretations of them and thus to obtain a ``concrete meaning''. In this work we provide such interpretations in terms of families of increasing multilabelled trees, thus giving concrete realizations for certain hook-length formul\ae{} for labelled ordered trees. In particular, we also obtain ``elliptic hook-length formul\ae{}''. For example, we show that the family $\mathcal{S}$ of strict-binary labelled trees satisfies
\begin{align*}
\sum_{T\in\mathcal{S}(n)}  \frac{1}{\prod_{v \in T}\left(2h_{v}(2h_{v}-1)\right)}
&= 
\frac{n!(2n+1)2^{3n+4}\pi^{n+1}}{3^{\frac{n-1}2}\Gamma^{4n+4}(\frac14)}\\
&\quad \times \sum_{n_1,n_2\in\Z}\frac{1}{(1+n_1+n_2+i(n_1-n_2))^{2n+2}}.
\end{align*}

\subsection{Notation}
The double factorial $(2n-1)!!$ is defined as the product $(2n-1)!! = \prod_{i=1}^n (2i-1)$. 
We denote with $\fallfak{x}{s}=x(x-1)\dots(x-(s-1))$, $s\ge 0$, the falling factorials.
For the reader's convenience we give throughout this work, whenever possible, links to the On-Line En\-cyclopedia of Integer Sequences - \href{https://oeis.org}{OEIS}.

\section{Bilabelled increasing trees\label{sec:BilabelledIncTrees}}

\subsection{Combinatorial description of bilabelled increasing trees}
It follows a formal definition of a family $\widehat{\mathcal{T}}$ of bilabelled increasing trees, which can be considered as containing all increasingly bilabelled instances of a weighted ordered tree family $\mathcal{T}$ as defined in Section~\ref{ssec:WeightedOrdTrees}.

A sequence of non-negative numbers $(\varphi_{j})_{j \ge 0}$, with $\varphi_{0} > 0$, is used to define the weight $w(T)$ of any ordered tree $T \in \mathcal{O}$ by $w(T) := \prod_{v \in T} \varphi_{\odeg(v)}$, where $v$ ranges over all vertices of $T$ and $\odeg(v)$ is the
out-degree of $v$. Furthermore, $\mathcal{L}(T)$ denotes the set of different increasing bilabellings of the tree $T$ with distinct integers of the label set $\{1, 2, \dots, 2\cdot|T|\}$, and $\ell(T) := \big|\mathcal{L}(T)\big|$ its cardinality.
Then the family $\widehat{\mathcal{T}}$ consists of all trees $T \in \mathcal{O}$ together with their
weights $w(T)$ and the set of increasing bilabellings $\mathcal{L}(T)$, i.e., one might think of triples $(T,w(T),L(T))$, with $L(T) \in \mathcal{L}(T)$ an increasing bilabelling of $T$.
For a combinatorial tree family $\widehat{\mathcal{T}}$, the total weights $T_{n} := \sum_{T \in \mathcal{O}(n)} w(T) \cdot \ell(T)$ of size-$n$ trees can be interpreted simply as the number of bilabelled increasing trees in $\widehat{\mathcal{T}}$ with $n$ nodes and label set $[2n]$.

Given a degree-weight sequence $(\varphi_{j})_{j \ge 0}$, we define the corresponding \emph{degree-weight generating function} via $\varphi(t) := \sum_{j \ge 0} \varphi_{j} t^{j}$.
Then it follows that the family $\widehat{\mathcal{T}}$ can be described by the following symbolic equation:
\begin{equation}
\label{HookBijII-eqn1}
  \widehat{\mathcal{T}} = \mathcal{Z}^{\Box} \ast \left(\mathcal{Z}^{\Box} \ast \varphi\big(\widehat{\mathcal{T}}\big)\right),
\end{equation}
where $\mathcal{Z}$ denotes the \emph{atomic class} (i.e., a single (uni)labelled node),
$\mathcal{A} \ast \mathcal{B}$ denotes the \emph{labelled product} and $\mathcal{A}^{\Box} \ast \mathcal{B}$ the \emph{boxed product} (i.e., the smallest label is constrained to lie in the $\mathcal{A}$ component) of the combinatorial classes $\mathcal{A}$ and $\mathcal{B}$, and $\varphi(\mathcal{A}) = \varphi_{0} \cdot \{\epsilon\} + \varphi_{1} \cdot \mathcal{A} + \varphi_{2} \cdot \mathcal{A}^{2} + \cdots$ denotes the class containing all \emph{labelled finite weighted sequences} of objects of $\mathcal{A}$ (i.e., each sequence of length $k$ is weighted by $\varphi_{k}$; $\epsilon$ denotes here the neutral object of size $0$), see \cite{FlaSed2009}.

\subsection{Generating functions and differential equations}
Let $T(z)$ denote the exponential generating function $T(z) := \sum_{n \ge 1} T_{n} \frac{z^{2n}}{(2n)!}$ of the number of bilabelled increasing trees in $\widehat{\mathcal{T}}$ with $n$ nodes and label set $[2n]$. Note that $T_{n}$ counts objects with $2n$ labels and the definition of $T(z)$ is thus in accordance with the symbolic description of $\widehat{\mathcal{T}}$ given in \eqref{HookBijII-eqn1}. Namely, the combinatorial construction~\eqref{HookBijII-eqn1} translates directly into an autonomous second order differential equation for the exponential generating function $T(z)$:
\begin{equation}
   \label{HookBijII-DGL1}
   T''(z) = \varphi\big(T(z)\big), \quad T(0)=0,\quad T'(0)=0.
\end{equation}

It is convenient to translate the second order equation into a first-order equation, leading to an implicit representation of $T(z)$.

\begin{prop}
\label{HookBijII-DGLProp}
The exponential generating function $T(z)$ of bilabelled increasing trees with degree-weight generating function $\varphi(t)$ satisfies the first order differential equation
\begin{equation}
\label{HookBijII-DGL2}
    T'(z) = \sqrt{2\cdot\Phi(T(z))}, \quad T(0)=0,
\end{equation}
with $\Phi(x)=\int_{0}^{x} \varphi(t)dt$. Moreover, $T=T(z)$ is given implicitly via
\[
\int_{0}^{T}\frac{dx}{\sqrt{2\cdot\Phi(x)}}=z.
\]
\end{prop}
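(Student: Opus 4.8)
The plan is to reduce the autonomous second-order equation \eqref{HookBijII-DGL1} to first order by the classical ``first integral'' (or energy) argument, exploiting that the right-hand side $\varphi(T(z))$ depends on $z$ only through $T(z)$. Since $\Phi$ is by definition an antiderivative of $\varphi$, with $\Phi'=\varphi$, the equation should admit a conserved quantity of the shape $\frac{1}{2}(T')^2 - \Phi(T)$, in analogy with the conservation of energy for a one-dimensional mechanical system with potential $-\Phi$.

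Concretely, I would first multiply both sides of \eqref{HookBijII-DGL1} by $T'(z)$, obtaining $T''(z)\,T'(z) = \varphi(T(z))\,T'(z)$. The left-hand side equals $\frac{d}{dz}\bigl[\frac{1}{2}(T'(z))^2\bigr]$, while by the chain rule the right-hand side equals $\frac{d}{dz}\bigl[\Phi(T(z))\bigr]$, since $\Phi'=\varphi$. Hence $\frac{d}{dz}\bigl[\frac{1}{2}(T'(z))^2 - \Phi(T(z))\bigr]=0$, so the bracketed expression is constant in $z$. Evaluating at $z=0$ and using the initial conditions $T(0)=0$ and $T'(0)=0$ together with $\Phi(0)=\int_0^0 \varphi(t)\,dt=0$ shows that this constant vanishes, which yields $(T'(z))^2 = 2\,\Phi(T(z))$.

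To pass from $(T')^2 = 2\,\Phi(T)$ to \eqref{HookBijII-DGL2} I would argue that the positive square root is the correct branch. From \eqref{HookBijII-DGL1} we have $T''(0)=\varphi(0)=\varphi_0>0$, so together with $T'(0)=0$ the derivative $T'(z)$ is strictly positive for small $z>0$; equivalently, the coefficients $T_n$ are non-negative, so $T(z)$ is increasing in $z$. Thus $T'(z)=+\sqrt{2\,\Phi(T(z))}$. Finally, separating variables gives $\frac{dT}{\sqrt{2\,\Phi(T)}}=dz$, and integrating from $0$ to $z$ while using $T(0)=0$ produces the implicit representation $\int_0^{T}\frac{dx}{\sqrt{2\,\Phi(x)}}=z$.

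The point requiring genuine care is the behaviour of the integrand at the lower endpoint: since $\Phi(0)=0$, the integral $\int_0^{T}\frac{dx}{\sqrt{2\,\Phi(x)}}$ is improper at $x=0$. Here I would observe that $\Phi(x)=\varphi_0\,x+O(x^2)$ near $x=0$, because $\Phi'(0)=\varphi_0>0$, so that $\frac{1}{\sqrt{2\,\Phi(x)}}\sim \frac{1}{\sqrt{2\,\varphi_0\,x}}$ as $x\to 0^+$, which is integrable; this guarantees that the implicit relation is well defined and that $z\mapsto T(z)$ is the genuine local inverse of the strictly increasing integral function near the origin. The remaining steps are the routine verification that this local solution is analytic and agrees with the formal power-series solution of \eqref{HookBijII-DGL1}, which ties the implicit description back to the generating function $T(z)$.
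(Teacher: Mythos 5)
Your proof is correct and follows essentially the same route as the paper: multiply the second-order equation by $T'(z)$, integrate to obtain the first integral $\tfrac12 (T'(z))^2 = \Phi(T(z))$ using the initial conditions, then separate variables. Your additional care about selecting the positive square-root branch (via $T''(0)=\varphi_0>0$) and about the integrability of $1/\sqrt{2\Phi(x)}$ at $x=0$ addresses points the paper passes over silently, but these are refinements of the same argument rather than a different approach.
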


\begin{proof}
In order to obtain the first-order differential equation we proceed in a standard way.
Multiplying \eqref{HookBijII-DGL1} with $T'(z)$ gives 
\begin{equation*}
    T'(z)\cdot T''(z) = T'(z)\varphi\big(T(z)\big),
\end{equation*}
and integrating this equation yields
\begin{equation*}
    \frac{\big(T'(z)\big)^2}{2} = \Phi(T(z)).
\end{equation*}
Consequently, we obtain 
\begin{equation*}
    T'(z) = \sqrt{2\cdot\Phi(T(z))}, \quad T(0)=0.
\end{equation*}
Note that by definition of $\Phi(x)$ and $T(0)=0$ we always have $T'(0)=\sqrt{2\Phi(0)}=0$.
Separation of variables and integration gives
\[
\int_{0}^{T}\frac{dx}{\sqrt{2\cdot\Phi(x)}}=z +C.
\]
Evaluating at $z=0$ further yields $0=0+C$ and thus shows the stated result.
\end{proof}

\subsection{Bilabelled increasing trees and hook-length formul\ae{}}
Given a tree $T$ of size $n$ with distinguishable nodes (e.g., an ordered tree or an unordered labelled tree) and the label set $\mathcal{M} = [2n]$.
When enumerating the number of increasing bilabellings of $T$
the hook-lengths of the nodes of $T$ appear naturally.
\begin{lemma}[~\cite{KubPan2012}]
\label{HookBijII-lem1}
  The number $|\mathcal{L}(T)|$ of different increasing bilabellings of a tree $T$ of size $n$ with distinguishable nodes is given as follows:
  \begin{equation*}
    |\mathcal{L}(T)| = \frac{(2n)!}{\prod_{v \in T}\left(2h_{v}(2h_{v}-1)\right)}.
  \end{equation*}
\end{lemma}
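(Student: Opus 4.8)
The plan is to prove the formula by a recursive decomposition along the root, after first recasting the defining condition of an increasing bilabelling into a form adapted to subtrees. For a node $v \in T$ let $T_v$ denote the subtree rooted at $v$; it contains $h_v$ nodes and hence $2h_v$ labels once a bilabelling is fixed. The observation I would establish first is the equivalence: \emph{a bilabelling of $T$ is increasing if and only if for every node $v$ the two labels assigned to $v$ are precisely the two smallest among the $2h_v$ labels occurring in $T_v$}. The forward direction is immediate, since along any root-to-node path the labels strictly increase, so the labels of $v$ lie below those of all its proper descendants; conversely, applying the ``two-smallest'' property to an arbitrary child $c$ of $v$ forces both labels of $c$ to exceed both labels of $v$, which is exactly the edge condition. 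This reformulation is the conceptual heart of the argument, as it renders the constraint local to each subtree.

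Next I would set up the recursion. Let $r$ be the root of $T$, with $\odeg(r) = d$, and let the subtrees dangling from $r$ be $T_1, \dots, T_d$ of sizes $n_1, \dots, n_d$, so that $n = 1 + \sum_i n_i$. By the reformulation, in any increasing bilabelling the root must receive the two globally smallest labels, i.e.\ exactly $\{1,2\}$, leaving the $2n-2$ labels $\{3, \dots, 2n\}$ to be distributed among the subtrees. The number of ways to split these labels into blocks of the required sizes $2n_1, \dots, 2n_d$ is the multinomial coefficient $\binom{2n-2}{2n_1, \dots, 2n_d}$, and once a block of $2n_i$ labels is assigned to $T_i$, the number of increasing bilabellings of $T_i$ using exactly those labels equals $|\mathcal{L}(T_i)|$, because the increasing condition depends only on the relative order of the labels. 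This yields the recurrence
\[
|\mathcal{L}(T)| = \binom{2n-2}{2n_1, \dots, 2n_d} \prod_{i=1}^{d} |\mathcal{L}(T_i)|.
\]

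Finally I would close the induction. Assuming $|\mathcal{L}(T_i)| = (2n_i)! / \prod_{v \in T_i} \big(2h_v(2h_v-1)\big)$ for each subtree, the factors $(2n_i)!$ cancel against the denominator of the multinomial coefficient, leaving $(2n-2)! / \prod_{v \neq r} \big(2h_v(2h_v-1)\big)$; multiplying and dividing by the root factor $2h_r(2h_r-1) = 2n(2n-1)$ and using $(2n)! = 2n(2n-1)(2n-2)!$ recovers the claimed formula, with base case $n=1$ giving $2!/(2\cdot 1) = 1$. The main obstacle is not computational, since the telescoping is routine, but lies in making the reduction step fully rigorous: one must justify that after fixing the root's label pair the enumeration over the subtrees genuinely factorizes, i.e.\ that the only constraint linking the subtrees is the partition of the remaining label set into blocks and that the count within each block is order-invariant. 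Equivalently, one can phrase the entire argument probabilistically: distributing the $2n$ labels uniformly at random (two per node, giving $\tfrac{(2n)!}{2^n}$ equally likely assignments), the events $A_v = \{v \text{ holds the two smallest labels of } T_v\}$ are independent with $\P[A_v] = 1/\binom{2h_v}{2} = 1/\big(h_v(2h_v-1)\big)$, and the formula is the product $\tfrac{(2n)!}{2^n}\prod_v \P[A_v]$; establishing this independence is the same subtle point in a different guise.
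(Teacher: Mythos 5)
Your proof is correct and follows essentially the same route as the paper's: the paper proves the $k$-labelled generalization (Lemma~\ref{HookBijOutlookLem1}) by exactly this induction on $|T|$, decomposing at the root (which necessarily carries the smallest labels), distributing the remaining labels over the subtrees via the multinomial coefficient $\binom{kn-k}{ks_1,\dots,ks_r}$, and telescoping. Your explicit ``two-smallest-labels-in-each-subtree'' reformulation and the probabilistic restatement are additional framing, but the core recurrence and its resolution coincide with the paper's argument specialized to $k=2$.
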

The proof of Lemma~\ref{HookBijII-lem1} can be carried out using induction; in Lemma~\ref{HookBijOutlookLem1} we will prove a generalization of this result.

As pointed out above, a family $\widehat{\mathcal{T}}$ of bilabelled increasing trees, i.e., a family of increasingly bilabelled weighted ordered trees with degree-weight sequence $(\varphi_{j})_{j \ge 0}$, consists of all triples $(T, w(T), L(T))$, with $T \in \mathcal{O}$ an ordered tree, $w(T)$ the weight of the ordered tree $T$ defined in terms of the degree-weight sequence via $w(T)=\prod_{v\in T}\varphi_{\odeg(v)}$, and $L(T) \in \mathcal{L}(T)$ an increasing bilabelling of $T$. Since the number $\ell(T) = |\mathcal{L}(T)|$ of increasing bilabellings of $T$ is given in Lemma~\ref{HookBijII-lem1}, we get that the total weight (i.e., for combinatorial tree families the number) $T_{n}$ of increasingly bilabelled size-$n$ trees in $\widehat{\mathcal{T}}$ is given as follows:
\begin{equation*}
 T_{n} = \sum_{T \in \mathcal{O}(n)} \frac{w(T) \cdot (2n)!}{\prod_{v \in T}\left(2h_{v}(2h_{v}-1)\right)}.
\end{equation*}
Thus, expressing the total weight $w(T)$ in terms of the degree-weights $\varphi_{\odeg(v)}$, this results in a hook-length formula for ordered trees.
\begin{theorem}
\label{HookBijIITheHook}
The family $\mathcal{O}$ of ordered trees satisfies the following hook-length formula:
\begin{equation*}
  \sum_{T \in \mathcal{O}(n)} \prod_{v \in T} \left(\frac{\varphi_{\odeg(v)}}{2h_{v}(2h_{v}-1)}\right) = \frac{T_{n}}{(2n)!},
\end{equation*}
where $T_n$ denote the total weights of bilabelled increasing trees with $2n$ labels and degree-weight generating function $\varphi(t)=\sum_{j\ge 0}\varphi_j t^j$.
\end{theorem}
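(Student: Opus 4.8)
The plan is to derive this hook-length formula directly from Lemma~\ref{HookBijII-lem1} together with the definition of the total weight $T_n$; essentially no new idea beyond the enumeration of increasing bilabellings is required, since the whole argument is a reorganization of a single sum over ordered shapes.

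First I would recall that, by definition, the total weight of the size-$n$ trees in $\widehat{\mathcal{T}}$ decomposes over the underlying ordered shapes as
\[
T_n = \sum_{T \in \mathcal{O}(n)} w(T)\cdot \ell(T),
\]
where $w(T) = \prod_{v \in T}\varphi_{\odeg(v)}$ is the weight of the ordered tree $T$ and $\ell(T) = |\mathcal{L}(T)|$ counts its increasing bilabellings with label set $[2n]$. Next I would substitute the explicit count from Lemma~\ref{HookBijII-lem1}, namely $\ell(T) = (2n)!\big/\prod_{v \in T}\left(2h_{v}(2h_{v}-1)\right)$. Since the factor $(2n)!$ depends only on $n$ and not on the individual tree $T$, it may be pulled out of the sum, and combining the two node-wise products over $v \in T$ then yields
\[
T_n = (2n)!\sum_{T \in \mathcal{O}(n)}\prod_{v \in T}\frac{\varphi_{\odeg(v)}}{2h_{v}(2h_{v}-1)}.
\]
Dividing both sides by $(2n)!$ gives exactly the claimed identity.

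The step carrying all the content is the formula of Lemma~\ref{HookBijII-lem1}, so the present theorem is really only bookkeeping; I therefore expect no genuine obstacle. The single point worth double-checking is that $w(T)$ and $\ell(T)$ combine cleanly into one product ranging over the nodes of $T$, which they do precisely because both are node-wise products — the degree-weights $\varphi_{\odeg(v)}$ in the numerator and the hook-factors $2h_{v}(2h_{v}-1)$ in the denominator. One should also make sure the prefactor is $(2n)!$ and not some other normalization, but this is exactly what the cited lemma guarantees, the label set being $[2n]$ for a tree of size $n$.
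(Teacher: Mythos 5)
Your proposal is correct and is essentially identical to the paper's own argument: the paper also writes $T_n = \sum_{T \in \mathcal{O}(n)} w(T)\,\ell(T)$, substitutes the count of increasing bilabellings from Lemma~\ref{HookBijII-lem1}, and merges the two node-wise products before dividing by $(2n)!$. There is nothing missing; the theorem is indeed pure bookkeeping once the lemma is in hand.
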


\subsection{Unordered bilabelled increasing trees}
As a first application of Proposition~\ref{HookBijII-DGLProp} and Theorem~\ref{HookBijIITheHook} we 
rederive the earlier results from~\cite{KubPan2012} concerning the family of unordered bilabelled increasing trees with 
degree-weight generating function $\varphi(t)=e^t$.
Thus, $\Phi(x)=e^{x}-1$ and 
\[
\int_{0}^{T(z)}\frac{dx}{\sqrt{2\cdot(e^x-1)}}=z.
\]
Integration gives the equation
\[
\sqrt{2}\arctan\Big(\sqrt{e^T-1}\Big)=z,
\]
and we obtain 
\begin{equation*}
  T(z)=\ln\big(1+\tan^2(\frac{z}{\sqrt{2}})\big).
\end{equation*} 
Extracting coefficients leads to the so-called reduced tangent numbers 
\begin{equation*}
  T_n=(2n)![z^{2n}]T(z)=\tilde{E}_n,
\end{equation*}
which might be defined via the following generating function: 
\begin{equation*}
\sum_{n \ge 1} \tilde{E}_{n} \frac{z^{2n-1}}{(2n-1)!} = \sqrt{2} \tan\big(\frac{z}{\sqrt{2}}\big).
\end{equation*}
These numbers appear in the enumeration of various combinatorial objects~\cite{FoaHan2010,Pou1989} and the sequence
starts with 
\begin{equation*}
  (T_n)=(1, 1, 4, 34, 496, 11056,\dots),
\end{equation*}
compare with Figure~\ref{HookBijIIFigureUnordered}; in the OEIS they appear as \href{https://oeis.org/A002105}{A002105}.
\begin{figure}[!thb]
\includegraphics[scale=0.5]{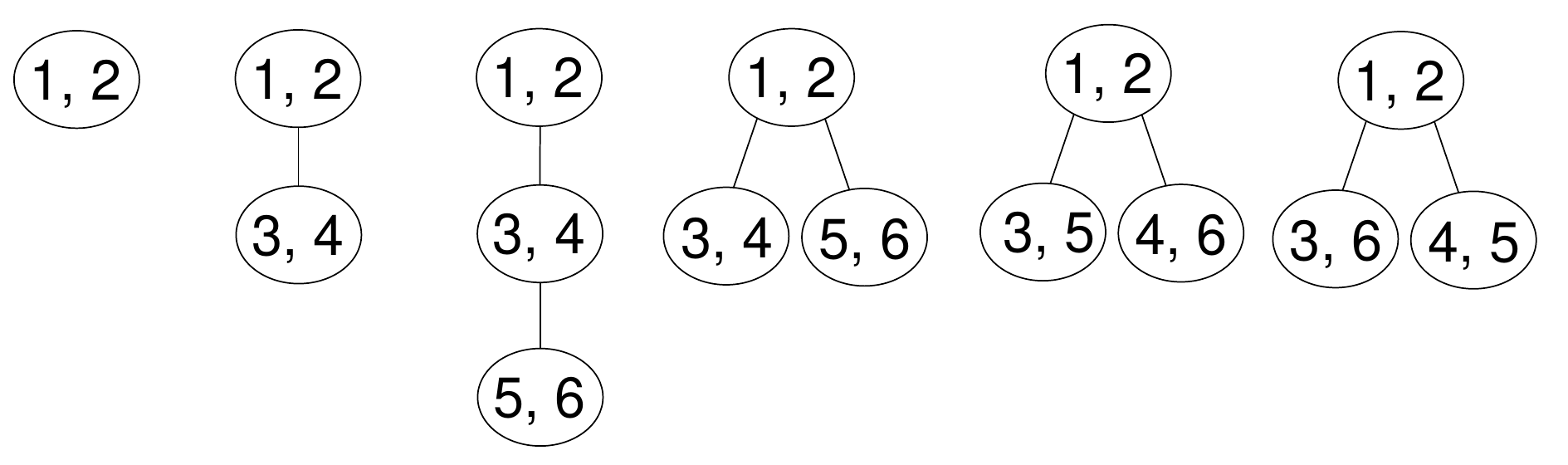}%
\caption{All unordered bilabelled increasing trees with two, four and six labels.}%
\label{HookBijIIFigureUnordered}
\end{figure}

Since $\varphi_{j}=[t^{j}]e^t = \frac{1}{j!}$, Theorem~\ref{HookBijIITheHook} gives the following hook-length formula.
\begin{coroll}
The family $\mathcal{O}$ of ordered trees satisfies the following hook-length formula:
\[
  \sum_{T\in \mathcal{O}(n)} \prod_{v \in T} \left(\frac{1}{\deg(v)! \cdot 2h_{v}(2h_{v}-1)}\right) = \frac{\tilde{E}_{n}}{(2n)!}.
\]
\end{coroll}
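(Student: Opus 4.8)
The plan is to obtain the corollary as a direct specialization of Theorem~\ref{HookBijIITheHook} to the degree-weight generating function $\varphi(t)=e^{t}$, which is precisely the one encoding unordered (non-plane) labelled trees as weighted ordered trees. Under this choice the degree-weights are $\varphi_{j}=[t^{j}]e^{t}=\frac{1}{j!}$, so that the weight factor attached to a node $v$ of out-degree $\odeg(v)=\deg(v)$ is $\varphi_{\odeg(v)}=\frac{1}{\deg(v)!}$.

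First I would substitute these weights into the left-hand side of the hook-length formula provided by Theorem~\ref{HookBijIITheHook}, yielding
\[
\sum_{T\in\mathcal{O}(n)}\prod_{v\in T}\left(\frac{\varphi_{\odeg(v)}}{2h_{v}(2h_{v}-1)}\right)
=\sum_{T\in\mathcal{O}(n)}\prod_{v\in T}\left(\frac{1}{\deg(v)!\cdot 2h_{v}(2h_{v}-1)}\right),
\]
which is exactly the sum appearing on the left-hand side of the asserted identity.

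Next I would identify the total weight $T_{n}$ on the right-hand side of Theorem~\ref{HookBijIITheHook} for this family. By Proposition~\ref{HookBijII-DGLProp} applied with $\varphi(t)=e^{t}$ we have $\Phi(x)=e^{x}-1$, and the implicit representation of $T(z)$ together with the integration already carried out in this subsection gives $T(z)=\ln\big(1+\tan^{2}(\frac{z}{\sqrt{2}})\big)$, whence $T_{n}=(2n)![z^{2n}]T(z)=\tilde{E}_{n}$. Inserting this into the right-hand side of Theorem~\ref{HookBijIITheHook} produces $\frac{T_{n}}{(2n)!}=\frac{\tilde{E}_{n}}{(2n)!}$, which completes the argument.

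There is no genuine obstacle here: the corollary is an immediate consequence of Theorem~\ref{HookBijIITheHook} combined with the evaluation $T_{n}=\tilde{E}_{n}$ established above. The only point deserving attention is the correct identification $\varphi_{j}=\frac{1}{j!}$, reflecting the fact that the family $\widetilde{\mathcal{U}}$ of unordered labelled trees is recovered from the weighted ordered tree model exactly via this degree-weight sequence, as recalled in Section~\ref{ssec:WeightedOrdTrees}.
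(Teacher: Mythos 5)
Your proposal is correct and follows exactly the paper's own route: specializing Theorem~\ref{HookBijIITheHook} to $\varphi(t)=e^{t}$ (so $\varphi_{j}=\frac{1}{j!}$) and invoking the evaluation $T_{n}=\tilde{E}_{n}$ obtained from Proposition~\ref{HookBijII-DGLProp} via $T(z)=\ln\big(1+\tan^{2}(\frac{z}{\sqrt{2}})\big)$. No gaps; this matches the paper's argument in substance and in detail.
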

Note that in~\cite{KubPan2012} the above result is stated in terms of the family $\widetilde{\mathcal{U}}$ of unordered (uni)labelled trees,  thus compared to ordered labelled trees without the factor $\frac{1}{\deg(v)!}$, but with an additional factor $n!$ on the right-hand side for the number of labellings.

\section{Families of planar bilabelled increasing trees}
In this section we enumerate combinatorial models of bilabelled increasing trees, which all can be described as increasing bilabellings of certain planar rooted tree models called $d$-bundled trees, see \cite{JanKubPan2011}. As pointed out above, each enumeration result yields a corresponding hook-length formula, which is presented occasionally.

The family $\mathcal{T}$ of $d$-bundled trees, with $d$ a positive integer, can be described as follows:
The root node has $d$ positions, and at each position a (possibly empty) sequence of $d$-bundled trees is attached.
Alternatively one might think of a $d$-bundled tree as an ordered tree, where the sequence of subtrees attached to any node
in the tree is separated by $d-1$ bars into $d$ bundles. Of course, $d=1$ simply gives the family $\mathcal{O}$ of ordered trees, and in general, $d$-bundled tree families are weighted ordered trees with degree-weight generating function $\varphi(t) = \frac{1}{(1-t)^{d}}$.
In the following we state results for increasingly bilabelled $d$-bundled trees with $d \le 3$.

\subsection{Ordered bilabelled increasing trees}
Recall that the so-called error function $\erf(z)$ is defined by
\[
\erf(z)=\frac{2}{\sqrt{\pi}}\int_{0}^{z}e^{-x^2}dx,
\]
and its inverse function $\erf^{-1}(z)$ can be written as follows:
\begin{equation}
  \label{HookBijIIerfinvers}
  \erf^{-1}(z) = \sum_{k=0}^\infty\frac{c_k}{2k+1}\left (\frac{\sqrt{\pi}}{2}z\right )^{2k+1},
\end{equation}
with coefficients $c_k$ defined by $c_0=1$ and $c_k=\sum_{m=0}^{k-1}\frac{c_m c_{k-1-m}}{(m+1)(2m+1)}$, for $k>0$.

Then, the enumerative result for the family of ordered bilabelled increasing trees can be stated as follows.
\begin{theorem}
\label{HookBijIIPropPort}
The exponential generating function $T(z)$ of ordered bilabelled increasing trees with $2n$ labels and degree-weight generating function $\varphi(t)=\frac{1}{1-t}$ is given by
\begin{equation*}
  T(z) = 1-\exp\bigg(-\Big(\erf^{-1}(\frac{\sqrt{2}}{\sqrt{\pi}}z)\Big)^2\bigg) = \sqrt{\pi}\int_{0}^{\frac{z\sqrt{2}}{\sqrt{\pi}}} \erf^{-1}(x)dx.
\end{equation*}
The numbers $T_n$ are given in terms of the coefficients $c_n$ occurring in the Taylor expansion of the inverse error function \eqref{HookBijIIerfinvers} as follows:
\begin{equation*}
  T_n =\frac{(2n-2)!}{2^{n-1}} \, c_{n-1}, \qquad n \ge 1,
\end{equation*}
and they satisfy the recurrence relation
\begin{equation*}
  T_{n} = \sum_{k=1}^{n-1} \binom{2n-2}{2k} T_{k} T_{n-k}, \quad \text{for} \enspace n \ge 2, \quad \text{with} \enspace T_{1} = 1.
\end{equation*}
\end{theorem}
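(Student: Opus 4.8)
The plan is to specialize Proposition~\ref{HookBijII-DGLProp} to the degree-weight generating function $\varphi(t)=\frac{1}{1-t}$ and then to evaluate the resulting quadrature explicitly in terms of the error function. First I would compute
\[
\Phi(x)=\int_0^x\frac{dt}{1-t}=-\ln(1-x),
\]
so that the implicit representation of Proposition~\ref{HookBijII-DGLProp} reads
\[
\int_0^{T}\frac{dx}{\sqrt{-2\ln(1-x)}}=z.
\]
The task thus reduces to identifying this integral with a rescaling of the error function.

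The key step is the substitution $1-x=e^{-u^2}$, equivalently $u=\sqrt{-\ln(1-x)}$, under which $dx=2u\,e^{-u^2}\,du$ and $\sqrt{-2\ln(1-x)}=\sqrt{2}\,u$. This turns the integrand into a pure Gaussian,
\[
\int_0^{T}\frac{dx}{\sqrt{-2\ln(1-x)}}=\sqrt{2}\int_0^{\sqrt{-\ln(1-T)}}e^{-u^2}\,du=\sqrt{\tfrac{\pi}{2}}\,\erf\!\Big(\sqrt{-\ln(1-T)}\Big),
\]
using $\int_0^w e^{-u^2}\,du=\frac{\sqrt{\pi}}{2}\erf(w)$. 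Setting this equal to $z$ and inverting gives $\sqrt{-\ln(1-T)}=\erf^{-1}\!\big(\tfrac{\sqrt{2}}{\sqrt{\pi}}z\big)$, so squaring and solving for $T$ yields the first claimed expression $T(z)=1-\exp\!\big(-(\erf^{-1}(\tfrac{\sqrt2}{\sqrt\pi}z))^2\big)$. For the second (integral) expression I would substitute $x=\erf(s)$ in $\sqrt{\pi}\int_0^{z\sqrt2/\sqrt\pi}\erf^{-1}(x)\,dx$, turning it into $2\int_0^{w}s\,e^{-s^2}\,ds=1-e^{-w^2}$ with $w=\erf^{-1}(\tfrac{\sqrt2}{\sqrt\pi}z)$, which agrees with the first expression.

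To obtain the coefficients I would differentiate the integral representation, giving the clean form $T'(z)=\sqrt{2}\,\erf^{-1}\!\big(\tfrac{\sqrt2}{\sqrt\pi}z\big)$. Substituting the Taylor series \eqref{HookBijIIerfinvers}, the factors $\frac{\sqrt\pi}{2}$ cancel against the argument scaling and one is left with
\[
T'(z)=\sum_{k\ge 0}\frac{c_k}{(2k+1)2^{k}}\,z^{2k+1},
\]
so that integrating term by term and comparing with $T(z)=\sum_{n\ge1}T_n\frac{z^{2n}}{(2n)!}$ (matching $k=n-1$) produces $T_n=\frac{(2n)!}{(2n-1)(2n)2^{n-1}}c_{n-1}=\frac{(2n-2)!}{2^{n-1}}c_{n-1}$.

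Finally, the recurrence comes directly from the governing differential equation~\eqref{HookBijII-DGL1}, which here reads $T''(z)=\frac{1}{1-T(z)}$, i.e.\ $T''(z)=1+T(z)\,T''(z)$. Writing $T''(z)=\sum_{n\ge1}T_n\frac{z^{2n-2}}{(2n-2)!}$ and $T(z)T''(z)=\sum_{n\ge1}\big(\sum_{k=1}^{n-1}\binom{2n-2}{2k}T_kT_{n-k}\big)\frac{z^{2n-2}}{(2n-2)!}$, extraction of the coefficient of $\frac{z^{2n-2}}{(2n-2)!}$ gives $T_1=1$ for $n=1$ and the stated convolution recurrence for $n\ge2$. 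The only genuinely non-routine point is spotting the Gaussian substitution in the quadrature; everything after that is bookkeeping with the inverse-error-function series and the defining ODE.
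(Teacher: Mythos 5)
Your proposal is correct and follows essentially the same route as the paper: specializing Proposition~\ref{HookBijII-DGLProp} with $\Phi(x)=-\ln(1-x)$, evaluating the quadrature via the Gaussian substitution $1-x=e^{-u^{2}}$, inverting through $\erf$, reading off the coefficients from the series \eqref{HookBijIIerfinvers}, and deriving the recurrence by coefficient extraction from $T''(z)=1+T(z)T''(z)$. The only cosmetic differences are that you verify the integral form by the direct substitution $x=\erf(s)$ instead of the paper's inverse-function antiderivative formula, and you make the coefficient extraction explicit via $T'(z)=\sqrt{2}\,\erf^{-1}\bigl(\tfrac{\sqrt{2}}{\sqrt{\pi}}z\bigr)$, which the paper leaves as a remark.
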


\begin{remark}
The sequence $(T_n)$ begins with
\begin{equation*}
  (T_n)_{n \ge 1} = (1, 1, 7, 127, 4369, 243649,\dots),
\end{equation*}
compare with Figure~\ref{HookBijIIFigurePort}; in the OEIS the numbers $T_n$ appear as \href{https://oeis.org/A002067}{A002067}.
\end{remark}
\begin{figure}[!thb]
\includegraphics[scale=0.5]{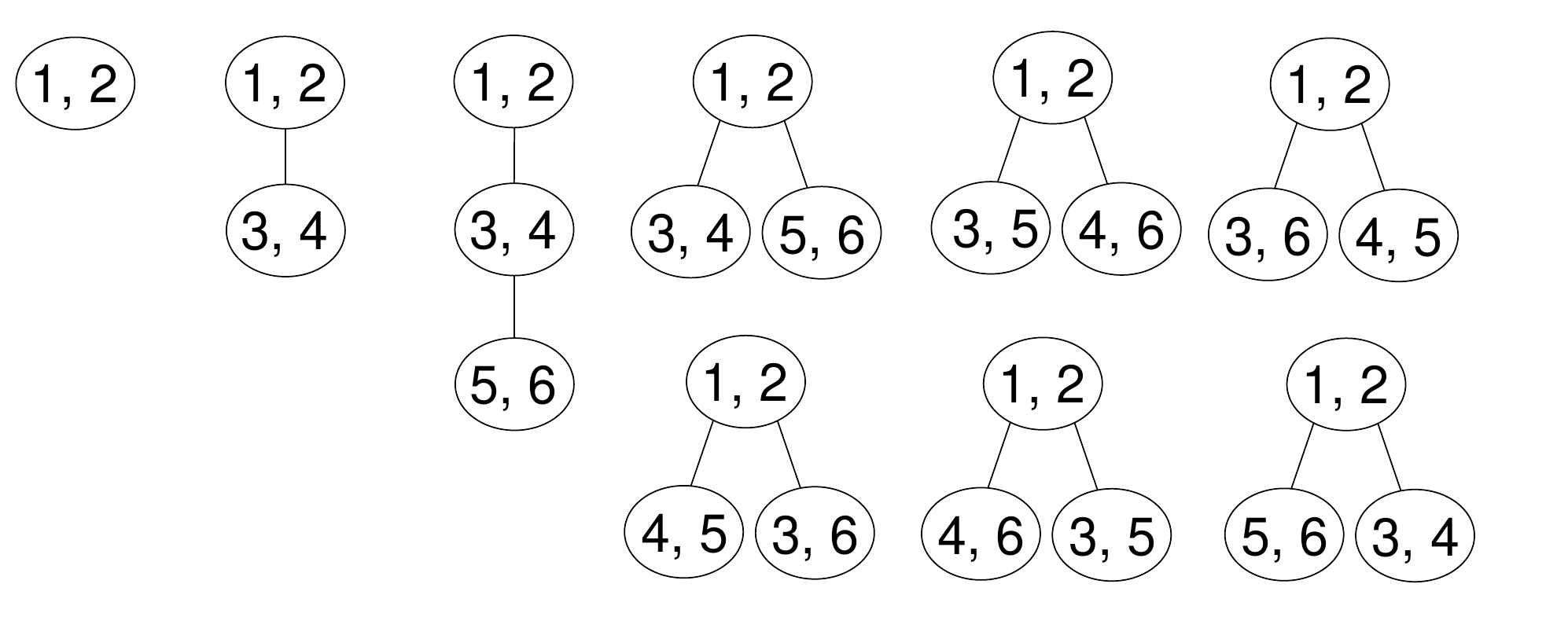}%
\caption{All ordered bilabelled increasing trees with two, four and six labels.}%
\label{HookBijIIFigurePort}
\end{figure}

\begin{proof}
In order to solve the differential equation
\begin{equation}\label{eqn:OrderedDEQ}
  T''(z) = \frac{1}{1-T(z)}, \quad T(0) = T'(0)=0,
\end{equation}
we apply Proposition~\ref{HookBijII-DGLProp} using the definitions given there.
We have
\[
\Phi(x)=\int_0^{x}\varphi(t)=\int_0^{x}\frac{1}{1-t} \, dt = L(x),
\]
where we use the shorthand notation $L(x)=-\ln(1-x)$. 
Thus, we get the equation 
\[
\int_{0}^T\frac1{\sqrt{2L(x)}} \, dx = z.
\]
An antiderivative of $\frac1{\sqrt{2L(x)}}$ is readily obtained using the substitution $x=1-e^{-u^2}$:
\[
\int_0^T\frac1{\sqrt{2L(x)}} \, dx = \frac{\sqrt{\pi}}{\sqrt{2}}\erf(\sqrt{L(T)})=z.
\]
Consequently, 
\[
\erf\big(\sqrt{-\ln(1-T(z))}\big)=\frac{\sqrt{2}}{\sqrt{\pi}}z,
\]
such that
\[
\ln(1-T(z))=-\big(\erf^{-1}\Big(\frac{\sqrt{2}}{\sqrt{\pi}}z\Big)\bigg)^2.
\]
We readily obtain the first part of the stated result by solving for $T(z)$. The antiderivative $\int \erf^{-1}(x)dx $ of the inverse error function
can be obtained using the formula for the antiderivative of an inverse function
$$
\int f^{-1}(x)dx=xf^{-1}(x)-F(f^{-1}(x))+C,\qquad F(x)=\int f(x)dx,
$$
which is easily proven using the substitution $y=f(x)$. The antiderivative of the error function
is obtained by integration by parts:
$\int\erf(x)dx=\int1\cdot \erf(x)dx=x\erf(x)+\frac1{\sqrt{\pi}}e^{-x^2}+C$.
Thus, 
\begin{equation*}
\begin{split}
\int\erf^{-1}(z)dz&=z\erf^{-1}(z)- \erf^{-1}(z)\erf(\erf^{-1}(z))\\
\quad &-\frac1{\sqrt{\pi}}\exp\Big(-\big(\erf^{-1}(z)\big)^2\Big)+C\\
&=C-\frac1{\sqrt{\pi}}\exp\Big(-\big(\erf^{-1}(z)\big)^2\Big).
\end{split}
\end{equation*}
Consequently, 
\[
\sqrt{\pi}\int_0^z\erf^{-1}(x)dx=1-\exp\Big(-\big(\erf^{-1}(z)\big)^2\Big).
\]
Thus we obtain the second part of the stated result replacing $z$ by $z\sqrt{2}/\sqrt{\pi}$. 
Concerning extracting coefficients it is beneficial to use the second expression for $T(z)$ in terms of the antiderivative and the power series expansion of $\erf^{-1}(z)$ as stated in~\eqref{HookBijIIerfinvers}. The recurrence relation for $T_{n}$ follows from \eqref{eqn:OrderedDEQ} by extracting coefficients (after multiplying with $(1-T(z))$), which completes the proof.
\end{proof}

Since the degree-weight generating function is given by $\varphi(t)=\frac{1}{1-t}=\sum_{j\ge 0}t^j$, 
it follows that $\varphi_j=[t^j]\varphi(t)=1$, for $j\ge 0$. Hence, from Theorem~\ref{HookBijIITheHook} and Proposition~\ref{HookBijIIPropPort} we obtain the following result.
\begin{coroll}
The family $\mathcal{O}$ of ordered trees satisfies the following hook-length formula:
\[
  \sum_{T\in \mathcal{O}(n)} \prod_{v \in T} \left(\frac{1}{2h_{v}(2h_{v}-1)}\right) = \frac{c_{n-1}}{n(2n-1) \, 2^n},
\]
with $c_n$ occurring as coefficients in the Taylor expansion of the inverse error function \eqref{HookBijIIerfinvers}.
\end{coroll}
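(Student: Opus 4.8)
The plan is to combine the general hook-length formula of Theorem~\ref{HookBijIITheHook} with the explicit evaluation of the numbers $T_n$ provided by Theorem~\ref{HookBijIIPropPort}, both specialized to the degree-weight generating function $\varphi(t)=\frac{1}{1-t}$. This is exactly the $d$-bundled case with $d=1$, i.e.\ the family $\mathcal{O}$ of ordered trees, so all the required ingredients are already in hand and the corollary is a matter of assembling them.

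First I would record that the degree weights are all trivial. From $\varphi(t)=\frac{1}{1-t}=\sum_{j\ge 0}t^j$ one reads off $\varphi_j=[t^j]\varphi(t)=1$ for every $j\ge 0$. Hence the weight factor $\varphi_{\odeg(v)}$ attached to each node $v$ in the product on the left-hand side of Theorem~\ref{HookBijIITheHook} equals $1$, and that formula collapses to
\[
\sum_{T\in\mathcal{O}(n)}\prod_{v\in T}\frac{1}{2h_v(2h_v-1)}=\frac{T_n}{(2n)!}.
\]

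Next I would insert the closed form $T_n=\frac{(2n-2)!}{2^{n-1}}\,c_{n-1}$ from Theorem~\ref{HookBijIIPropPort} into the right-hand side and simplify. Writing $(2n)!=(2n)(2n-1)(2n-2)!$ lets the factor $(2n-2)!$ cancel, which leaves $\frac{c_{n-1}}{2^{n-1}(2n)(2n-1)}$; absorbing the factor $2$ from $2n$ into $2^{n-1}$ produces $2^{n}\,n$ in the denominator. This gives precisely $\frac{c_{n-1}}{n(2n-1)\,2^{n}}$, as claimed.

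There is no genuine obstacle: the statement is a direct corollary of two already-established results together with a one-line factorial simplification. The only point demanding a little care is the bookkeeping of the power of two, namely verifying that the constant in $\frac{(2n-2)!}{2^{n-1}(2n)!}$ recombines correctly into $2^{n}$ while the linear factors $n$ and $(2n-1)$ end up matched against the target expression; everything else is routine.
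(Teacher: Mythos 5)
Your proposal is correct and follows exactly the paper's route: observe $\varphi_j=1$ for all $j\ge 0$, apply Theorem~\ref{HookBijIITheHook}, and substitute $T_n=\frac{(2n-2)!}{2^{n-1}}\,c_{n-1}$ from Theorem~\ref{HookBijIIPropPort}. The paper leaves the factorial simplification $\frac{(2n-2)!}{2^{n-1}(2n)!}=\frac{1}{2^{n}n(2n-1)}$ implicit, which you have correctly spelled out.
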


\subsection{3-bundled bilabelled increasing trees}
We state the surprisingly explicit enumeration results for this tree family.
\begin{theorem}
\label{HookBijIIPropGenPort}
The exponential generating function $T(z)$ of the number $T_{n}$ of $3$-bundled bilabelled increasing trees with $2n$ labels and degree-weight generating function $\varphi(t)=\frac{1}{(1-t)^{3}}$ is given by
\begin{equation*}
  T(z)=1-\sqrt{1-z^2}.
\end{equation*}
The numbers $T_n$ are given by
\begin{equation*}
  T_n =(2n-3)!! \, (2n-1)!!.
\end{equation*}
\end{theorem}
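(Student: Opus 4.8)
The plan is to apply Proposition~\ref{HookBijII-DGLProp} directly and then extract coefficients. First I would compute the antiderivative $\Phi(x)=\int_0^x (1-t)^{-3}\,dt = \frac{1}{2(1-x)^2}-\frac12 = \frac{x(2-x)}{2(1-x)^2}$, so that $2\Phi(x) = \frac{x(2-x)}{(1-x)^2}$. On the relevant range $x\in(0,1)$ we have $1-x>0$, hence $\frac{1}{\sqrt{2\Phi(x)}} = \frac{1-x}{\sqrt{x(2-x)}}$, and the implicit equation of Proposition~\ref{HookBijII-DGLProp} becomes $\int_0^T \frac{(1-x)\,dx}{\sqrt{x(2-x)}} = z$.

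The crucial simplification is the algebraic identity $x(2-x) = 1-(1-x)^2$, which suggests the substitution $u = 1-x$. Under it the integral collapses to $\int_{1-T}^1 \frac{u\,du}{\sqrt{1-u^2}} = \big[-\sqrt{1-u^2}\,\big]_{1-T}^1 = \sqrt{1-(1-T)^2} = \sqrt{T(2-T)}$. Thus the implicit relation reduces to $\sqrt{T(2-T)} = z$, i.e.\ $T^2 - 2T + z^2 = 0$. Solving this quadratic and selecting the branch compatible with the initial condition $T(0)=0$ forces $T(z) = 1-\sqrt{1-z^2}$, which is the first claimed formula.

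For the numbers $T_n$ I would expand $\sqrt{1-z^2} = (1-z^2)^{1/2}$ by the binomial series, which yields $[z^{2n}]T(z) = (-1)^{n+1}\binom{1/2}{n}$ and therefore $T_n = (2n)!\,(-1)^{n+1}\binom{1/2}{n}$. It then remains only to rewrite the generalized binomial coefficient in terms of double factorials. Writing $\binom{1/2}{n} = \frac{1}{2^n n!}\prod_{k=0}^{n-1}(1-2k)$ and using $\prod_{k=0}^{n-1}(1-2k) = (-1)^{n-1}(2n-3)!!$ (with the convention $(-1)!!=1$), together with the standard identity $(2n)! = 2^n n!\,(2n-1)!!$, the factors $2^n n!$ cancel and the signs combine via $(-1)^{n+1}(-1)^{n-1}=1$ to give $T_n = (2n-3)!!\,(2n-1)!!$, as claimed.

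None of the steps is genuinely deep; the argument is driven entirely by the fortunate factorization of $2\Phi$. The single point requiring real care is the integral evaluation, where spotting $x(2-x)=1-(1-x)^2$ and choosing the substitution that renders the antiderivative elementary is essentially the whole content, and where one must keep track of the correct branch of the square root (using $1-x>0$ on $(0,1)$) so that the sign in the quadratic solution is pinned down correctly. The coefficient bookkeeping in the last step—matching signs and double factorials—is routine but error-prone, so I would sanity-check it against the first few values, namely $T_1 = 1$, $T_2 = 3$, $T_3 = 45$, and $T_4 = 1575$.
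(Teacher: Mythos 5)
Your proposal is correct and follows essentially the same route as the paper: apply Proposition~\ref{HookBijII-DGLProp}, evaluate $\int_0^T \frac{(1-x)\,dx}{\sqrt{x(2-x)}} = \sqrt{T(2-T)} = z$, solve the quadratic for $T(z)=1-\sqrt{1-z^2}$, and extract coefficients via the binomial series to get $T_n=(2n-3)!!\,(2n-1)!!$. The only cosmetic difference is that you make the substitution $u=1-x$ explicit and expand $(1-z^2)^{1/2}$ directly, whereas the paper states the antiderivative outright and extracts coefficients via $[z^{2n}]\left(1-\sqrt{1-z^2}\right)=[z^n]\left(1-\sqrt{1-z}\right)$.
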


\begin{remark}
The sequence $(T_n)$ starts with 
\begin{equation*}
  (T_n)=(1,3,45, 1575, 99225,\dots),
\end{equation*}
and in the OEIS the numbers $T_n$ appear as \href{https://oeis.org/A079484}{A079484}.
\end{remark}
\begin{figure}[!htb]
\includegraphics[scale=0.6]{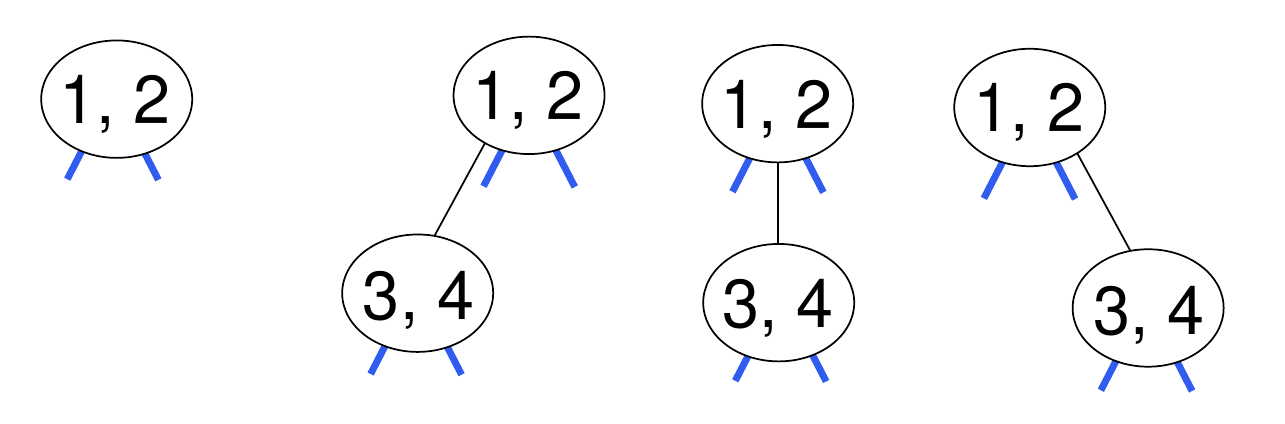}%
\caption{All 3-bundled bilabelled increasing trees with two and four labels.}%
\end{figure}

\begin{proof}
Again we apply Proposition~\ref{HookBijII-DGLProp}.
Here we get
\[
\Phi(x)=\int_0^{x}\varphi(t)=\frac1{2(1-x)^2}-\frac12.
\]
Separation of variables leads to
\[
\int_0^{T} \frac{1-x}{\sqrt{x(2-x)}} \, dx=\sqrt{T(2-T)}=z.
\]
Solving the quadratic equation yields $T(z)=1-\sqrt{1-z^2}$, and extracting coefficients, for $n \ge 1$, gives the stated explicit enumeration result:
\begin{equation*}
\begin{split}
T_n & =(2n)![z^{2n}]T(z)=(2n)![z^{n}](1-\sqrt{1-z})=(2n)!\cdot \binom{\frac12}{n}(-1)^{n-1}\\
&=\frac{(2n)!}{2^n n!}\cdot (2n-3)!! = (2n-1)!!\cdot (2n-3)!!.
\end{split}
\end{equation*}
\end{proof}

The degree-weight generating function of $3$-bundled bilabelled increasing trees is given by $\varphi(t)=\frac{1}{(1-t)^3}$, 
thus it holds that $\varphi_j=[t^j]\varphi(t)=\binom{j+2}{2}$, for $j \ge 0$. 
Consequently,
\[
\varphi_{\deg(v)}=\frac{(\odeg(v)+2)(\odeg(v)+1)}{2}.
\]
Moreover, we have
\[
 \prod_{v\in T}\varphi_{\deg(v)} = \frac{1}{2^{|T|}}\prod_{v\in T}\Big((\odeg(v)+2)(\odeg(v)+1)\Big).
\]
Since 
\[
\frac{T_{n}}{(2n)!} \cdot 2^n = \frac{(2n-3)!!}{n!},
\]
Theorem~\ref{HookBijIITheHook} and Proposition~\ref{HookBijIIPropGenPort} give the following result.

\begin{coroll}
The family $\mathcal{O}$ of ordered trees satisfies the following hook-length formula:
\[
\sum_{T\in\mathcal{O}(n)} \prod_{v\in T}\left(\frac{(\odeg(v)+2)(\odeg(v)+1)}{2h_{v}(2h_{v}-1)}\right)
= \frac{(2n-3)!!}{n!}.
\]
\end{coroll}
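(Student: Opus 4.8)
The plan is to combine the two enumerative inputs already developed in the excerpt, namely the hook-length formula of Theorem~\ref{HookBijIITheHook} and the explicit count $T_n = (2n-3)!!\,(2n-1)!!$ from Theorem~\ref{HookBijIIPropGenPort}, and then simplify. First I would specialize Theorem~\ref{HookBijIITheHook} to the degree-weight generating function $\varphi(t)=\frac{1}{(1-t)^3}$ of $3$-bundled trees, which gives $\varphi_j = [t^j](1-t)^{-3} = \binom{j+2}{2} = \frac{(j+2)(j+1)}{2}$ for all $j\ge 0$. Substituting $\varphi_{\odeg(v)} = \frac{(\odeg(v)+2)(\odeg(v)+1)}{2}$ into the left-hand side of the hook-length formula produces exactly the summand appearing in the corollary, so the entire content reduces to identifying the right-hand side $\frac{T_n}{(2n)!}$ with $\frac{(2n-3)!!}{n!}$ after the factor-of-$2$ bookkeeping.

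The only real computation is a double-factorial simplification. Using $T_n = (2n-1)!!\,(2n-3)!!$ from Proposition~\ref{HookBijIIPropGenPort}, I would rewrite the right-hand side $\frac{T_n}{(2n)!}$ and exploit the identity $(2n)! = (2n)!!\,(2n-1)!! = 2^n\, n!\,(2n-1)!!$. Dividing, the factor $(2n-1)!!$ cancels cleanly, yielding
\[
\frac{T_n}{(2n)!} = \frac{(2n-1)!!\,(2n-3)!!}{2^n\,n!\,(2n-1)!!} = \frac{(2n-3)!!}{2^n\, n!}.
\]
This matches the relation $\frac{T_n}{(2n)!}\cdot 2^n = \frac{(2n-3)!!}{n!}$ already recorded in the excerpt just before the statement.

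To reconcile the power of $2$ with the left-hand side I would note, as the excerpt does, that $\prod_{v\in T}\varphi_{\odeg(v)} = \frac{1}{2^{|T|}}\prod_{v\in T}\big((\odeg(v)+2)(\odeg(v)+1)\big)$, so for a tree of size $n$ the accumulated constant is precisely $2^{-n}$. Pulling this $2^{-n}$ out of the sum on the left and absorbing it into the right-hand side converts $\frac{T_n}{(2n)!}$ into $\frac{T_n}{(2n)!}\cdot 2^n = \frac{(2n-3)!!}{n!}$, which is exactly the claimed closed form. The remaining factor $\frac{1}{\prod_{v}\,2h_v(2h_v-1)}$ in the summand is inherited unchanged from Theorem~\ref{HookBijIITheHook}, so no further manipulation of the hook-length part is needed.

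There is essentially no obstacle here, since the corollary is a direct specialization; the main point requiring care is the arithmetic of odd and even double factorials, in particular keeping track of the $2^n$ that is shared between the $\varphi_j$-weights and the reduction of $(2n)!$ via $(2n)! = 2^n\,n!\,(2n-1)!!$. Everything else follows formally by substitution into the two results cited above.
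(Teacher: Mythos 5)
Your proposal is correct and follows essentially the same route as the paper: specializing Theorem~\ref{HookBijIITheHook} to $\varphi(t)=\frac{1}{(1-t)^{3}}$, pulling the factor $2^{-n}$ out of the product of degree-weights, and using $(2n)! = 2^{n}\,n!\,(2n-1)!!$ together with $T_n=(2n-1)!!\,(2n-3)!!$ to identify the right-hand side as $\frac{(2n-3)!!}{n!}$. The only minor imprecision is the phrase that the substitution ``produces exactly the summand appearing in the corollary'' (it produces it up to the $2^{-n}$), but you account for this correctly in the subsequent bookkeeping.
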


\subsection{2-bundled bilabelled increasing trees}
\begin{theorem}
\label{thm:2bundled}
The exponential generating function $T(z)$ of the number $T_{n}$ of $2$-bundled bilabelled increasing trees with $2n$ labels and degree-weight generating function $\varphi(t)=\frac{1}{(1-t)^{2}}$ is given implicitly via
$$2\Big(\arcsin(\sqrt{T})+\sqrt{T}\sqrt{1-T}\Big)^2=z^2.$$
The numbers $T_n$ satisfy for $n \ge 2$ the following recurrence (with $T_{1}=1$):
\begin{equation*}
  T_{n} = 2 \sum_{k=1}^{n-1} \binom{2n-2}{2k} T_{k} T_{n-k} - \sum_{j+k+\ell=n-1} \binom{2n-2}{2j, 2k, 2\ell} T_{j} T_{k} T_{\ell+1}.
\end{equation*}
Let the sequence $(x_k)$ be defined by $x_k=k!\frac{k\binom{2k}k }{4^{k}(2k+1)}$.
The number $T_n$ of trees with $2n$ labels $n>1$ can be given in terms of the Bell polynomials as follows:
$$T_n =\frac{(2n)!}{n}\frac{1}{8^n}\sum_{m=1}^{n-1}\binom{2n-1+m}{m}B_{n-1,m}(x_1,\dots,x_{n-m}).$$
\end{theorem}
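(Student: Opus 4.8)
The plan is to treat the three assertions separately, handling the first two with the tools already developed in Proposition~\ref{HookBijII-DGLProp} and reserving Lagrange inversion together with the Bell-polynomial calculus for the last. For the implicit equation I would argue exactly as in the proofs of Theorems~\ref{HookBijIIPropPort} and~\ref{HookBijIIPropGenPort}: with $\varphi(t)=(1-t)^{-2}$ one gets $\Phi(x)=\int_0^x(1-t)^{-2}\,dt=\frac{x}{1-x}$, so Proposition~\ref{HookBijII-DGLProp} reduces the task to evaluating $\int_0^T\frac{dx}{\sqrt{2x/(1-x)}}=\frac1{\sqrt2}\int_0^T\sqrt{\tfrac{1-x}{x}}\,dx=z$. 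The substitution $x=\sin^2\theta$ turns the integrand into $2\cos^2\theta$, whose antiderivative is $\theta+\sin\theta\cos\theta=\arcsin\sqrt x+\sqrt x\sqrt{1-x}$; evaluating from $0$ to $T$ and squaring then gives the stated implicit relation. This step is routine.

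For the recurrence I would not use the implicit form but work directly from the autonomous equation \eqref{HookBijII-DGL1}, which here reads $T''(z)=(1-T(z))^{-2}$. Clearing the denominator gives the polynomial identity $(1-T)^2T''=1$, i.e. $T''-2TT''+T^2T''=1$. Writing $T(z)=\sum_{n\ge1}T_n\frac{z^{2n}}{(2n)!}$ and extracting the coefficient of $\frac{z^{2n-2}}{(2n-2)!}$ on both sides yields the recurrence. The only subtlety is the Cauchy-product rule for these even exponential generating functions: a product of two such series carries the weight $\binom{2n-2}{2k}$ and a product of three carries $\binom{2n-2}{2j,2k,2\ell}$, which is exactly the source of the (multi)nomial coefficients in the statement. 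The constant right-hand side contributes only at $n=1$ (fixing $T_1=1$), and $T_0=0$ removes the boundary terms from the triple sum.

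The Bell-polynomial formula is the substantive part, and here I would pass to $s=\sqrt T$. The implicit relation then becomes $F(s)=\mathrm{const}\cdot z$ with $F(s)=\arcsin s+s\sqrt{1-s^2}$, and the decisive simplification is $F'(s)=2\sqrt{1-s^2}$; consequently $g(s):=F(s)/s=\frac{\arcsin s}{s}+\sqrt{1-s^2}$ is an even series with $g(0)=2$. Applying Lagrange inversion to $H(s)=s^2=T$ expresses $T_n$ as an explicit multiple of $[s^{2n-2}]g(s)^{-2n}$. Factoring $g=2(1-\rho)$ and invoking the generalized binomial theorem $(1-\rho)^{-2n}=\sum_{m\ge0}\binom{2n-1+m}{m}\rho^m$ produces the binomial weights $\binom{2n-1+m}{m}$, while the partial Bell-polynomial identity for powers of a power series converts $[s^{2n-2}]\rho^m$ into $B_{n-1,m}$ evaluated at the Taylor coefficients of $\rho$. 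Those coefficients are governed by the elementary series $\frac{\arcsin\sqrt y}{\sqrt y}=\sum_k\frac{\binom{2k}{k}}{4^k(2k+1)}y^k$ and $\sqrt{1-y}$, which is the origin of the central-binomial pattern in the arguments $x_k$.

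The main obstacle will be the constant and normalization bookkeeping in this last step. One must keep track at once of the powers of two coming from $g(0)=2$, of those produced by the rescaling $s=\sqrt T$ and the attendant substitution $z\mapsto\mathrm{const}\cdot z$, and of the factorial normalization relating ordinary coefficient extraction $[s^{2n-2}]\rho^m$ to the partial Bell polynomials $B_{n-1,m}$, so as to land on the prefactor $\frac{(2n)!}{n}\frac1{8^n}$ and to match the arguments with the closed form $x_k=k!\frac{k\binom{2k}{k}}{4^k(2k+1)}$. To pin down every constant I would first compute $T_1=1$, $T_2=2$, $T_3=22$ from the recurrence of the previous part and calibrate the general formula against these values before carrying out the bookkeeping in full.
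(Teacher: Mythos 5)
Your strategy coincides with the paper's proof step by step: Proposition~\ref{HookBijII-DGLProp} plus a trigonometric substitution for the implicit form, coefficient extraction from $(1-T(z))^{2}T''(z)=1$ for the recurrence, and Lagrange inversion with $Z=z^{2}$, the negative-binomial series and partial Bell polynomials for the explicit formula. Your treatment of the recurrence is correct and is exactly the paper's argument.

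The genuine gap is that you \emph{assert}, rather than verify, that your computations land on the stated formul\ae{} --- and they do not, because the stated formul\ae{} are themselves erroneous, so no correct computation can reach them. Your integral evaluation is right: with $x=\sin^{2}\theta$ one gets $z=\tfrac{1}{\sqrt{2}}\bigl(\arcsin\sqrt{T}+\sqrt{T}\sqrt{1-T}\bigr)$, hence $\tfrac12\bigl(\arcsin\sqrt{T}+\sqrt{T}\sqrt{1-T}\bigr)^{2}=z^{2}$, which differs from the stated relation $2(\cdots)^{2}=z^{2}$ by a factor of $4$. The stated relation forces $T(z)=\tfrac{z^{2}}{8}+O(z^{4})$, i.e.\ $T_{1}=\tfrac14$, contradicting $T_{1}=1$; so your claim that ``evaluating from $0$ to $T$ and squaring gives the stated implicit relation'' is false. (The paper's own proof makes the corresponding slip, evaluating $\sqrt{2}\int\cos^{2}v\,dv$ as $\sqrt{2}\,(v+\sin v\cos v)$ instead of $\tfrac{1}{\sqrt{2}}(v+\sin v\cos v)$.) The same problem recurs, less repairably, in the Bell-polynomial part. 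Your key identity $F'(s)=2\sqrt{1-s^{2}}$ is correct and yields $g(s)=2\bigl(1-\rho(s^{2})\bigr)$ with $\rho_{k}=\frac{\binom{2k}{k}}{4^{k}(4k^{2}-1)}$; for $k\ge 2$ this disagrees with the stated arguments, since $x_{k}/k!=\frac{k\binom{2k}{k}}{4^{k}(2k+1)}$ (e.g.\ $\rho_{2}=\tfrac1{40}$ versus $x_{2}/2!=\tfrac{3}{20}$). The paper's values come from the false expansion $\sqrt{1-w}=1-\sum_{k\ge1}\binom{2k}{k}\frac{w^{k}}{4^{k}}$, whose right-hand side is actually $2-(1-w)^{-1/2}$. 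Consequently your closing plan --- ``calibrate the general formula against $T_{1}=1$, $T_{2}=2$, $T_{3}=22$'' --- cannot close the proof: the stated formula gives $T_{2}=\tfrac{4!}{2}\cdot\tfrac{1}{64}\cdot\binom{4}{1}\cdot\tfrac16=\tfrac18\neq 2$, and since the discrepancy sits inside the arguments $x_{k}$ and not merely in a global prefactor, no calibration of constants can absorb it.

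For the record, carrying your (correct) derivation through gives the repaired statement: $\tfrac12\bigl(\arcsin\sqrt{T}+\sqrt{T}\sqrt{1-T}\bigr)^{2}=z^{2}$ and, with $x_{k}=\frac{k!\binom{2k}{k}}{4^{k}(4k^{2}-1)}$,
\begin{equation*}
  T_{n}=\frac{(2n)!}{n\,2^{n}}\sum_{m=1}^{n-1}\binom{2n-1+m}{m}\,\frac{m!}{(n-1)!}\,B_{n-1,m}(x_{1},\dots,x_{n-m}),
\end{equation*}
where the factor $\frac{m!}{(n-1)!}$ is the normalization $[w^{n-1}]\rho^{m}=\frac{m!}{(n-1)!}B_{n-1,m}$ under the standard Bell-polynomial convention (a factor both you and the paper omit); this formula does reproduce $2,22,584,\dots$. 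So your method is sound --- indeed more accurate than the paper's own execution --- but as written the proposal papers over exactly the points where the statement fails, which is a gap a referee must flag.
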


\begin{remark}
The sequence $(T_n)$ begins with
\begin{equation*}
  (T_n)_{n \ge 1} = (1, 2, 22, 584, 28384, 2190128,\dots);
\end{equation*}
in the OEIS the numbers $T_n$ appear as \href{https://oeis.org/A120419}{A120419}, but without giving a combinatorial interpretation of this enumeration sequence. Thus we are able to present such one.
\end{remark}

\begin{proof}
We apply Proposition~\ref{HookBijII-DGLProp} to the differential equation
\begin{equation}\label{eqn:2bundledDEQ2}
  T''(z) = \frac{1}{(1-T(z))^{2}}, \quad T(0) = T'(0)=0,
\end{equation}
and take into account
\[
\Phi(x)=\int_0^{x}\varphi(t)=\int_0^{x}\frac{1}{(1-t)^{2}} \, dt = \frac{x}{1-x}.
\]
In order to solve
\[
\int_0^{T} \frac{\sqrt{1-x}}{\sqrt{2x}}dx=z,
\]
we obtain first an antiderivative of $\frac{\sqrt{1-x}}{\sqrt{2x}}$. 
Standard substitutions $x=u^2$ and $u=\sin(v)$ give
\[
\int_0^{T} \frac{\sqrt{1-x}}{\sqrt{2x}}dx=\sqrt{2}\big(\arcsin(\sqrt{T})+\sqrt{T}\sqrt{1-T}\big),
\]
such that $T=T(z)$ is defined by the equation
\[
\sqrt{2}\big(\arcsin(\sqrt{T})+\sqrt{T}\sqrt{1-T}\big)=z.
\]
We know a priori that $T(z)$ is a power series in $z^2$. Hence, we square the equation to obtain
\[
2\Big(\arcsin(\sqrt{T})+\sqrt{T}\sqrt{1-T}\Big)^2=z^2.
\]
This proves the first part of the stated result. The recurrence for the numbers $T_{n}$ can be obtained in a straightforward way by extracting coefficients from the equation
\begin{equation*}
  T''(z) = 1 + 2 T(z) T''(z) - T^{2}(z) T''(z),
\end{equation*}
which follows immediately from \eqref{eqn:2bundledDEQ2}.
In order to obtain the expression for the number $T_n$ we can use Lagrange's inversion formula. 
Let $Z=z^2$ and $\phi(w)$ be given by 
$$
\phi(w)=\frac{w}{2\big(\arcsin(\sqrt{w})+\sqrt{w}\sqrt{1-w}\big)^2},
$$
such that 
\[
Z=\frac{T}{\phi(T)}.
\]
Consequently, $T(Z)$ is the inverse function of $\frac{w}{\phi(w)}$ and its coefficients can be obtained as follows:
\[
T_n=(2n)![z^{2n}]T(z)=(2n)![Z^n]T(Z)=\frac{(2n)!}{n} [w^{n-1}] \phi(w)^n.
\]
In order to extract coefficients we use the fact that
\[
\arcsin(\sqrt{w})=w^{\frac12}\sum_{k\ge 0}\frac{\binom{2k}k }{4^k(2k+1)}w^{k},\qquad
\sqrt{w}\sqrt{1-w}=w^{\frac12}\Big(1-\sum_{k\ge 1}\binom{2k}k \frac{w^k}{4^k}\Big),
\]
such that $\phi(w)=\frac{1}{8(1-\vartheta(w))^2}$, with
\begin{equation}
\label{HookBijTwoBundled}
\vartheta(w)=\sum_{k\ge 1}\vartheta_k w^k=\sum_{k\ge 1}\frac{k\binom{2k}k }{4^{k}(2k+1)} w^k.
\end{equation}
Hence, we obtain the formal power series 
\[
\phi^n(w)=\frac{1}{8^n(1-\vartheta(w))^{2n}}=\frac{1}{8^n}\Big(1+\sum_{k\ge 1}\phi_{n,k}w^k\Big),
\]
with $\phi_{n,k}$ for $k\ge 1$ given by
\[
\phi_{n,k}=\sum_{m=1}^k\binom{2n-1+m}{m}\sum_{\substack{j_1+\dots+j_k=m\\\sum_{i=1}^k i j_i=k}}
\binom{m}{j_1,\dots,j_m}\prod_{\ell=1}^{k}\bigg(\frac{\ell\binom{2\ell}{\ell}}{4^{\ell}(2\ell+1)}\bigg)^{j_\ell}.
\]
We can use Bell polynomials $B_{k,m}(x_1,\dots,x_{k-m+1})$
evaluated at $x_k=k! \vartheta_k$ to get the equivalent expression
\[
\phi_{n,k}=\sum_{m=1}^k\binom{2n-1+m}{m}B_{k,m}(x_1,\dots,x_{k-m+1}).
\]
Consequently,
\begin{equation*}
\begin{split}
T_n&=\frac{(2n)!}{n} [w^{n-1}] \phi(w)^n=\frac{(2n)!}{n}\frac{1}{8^n}\phi_{n,n-1}\\
&=\frac{(2n)!}{n}\frac{1}{8^n}\sum_{m=1}^{n-1}\binom{2n-1+m}{m}B_{n-1,m}(x_1,\dots,x_{n-m}).
\end{split}
\end{equation*}
This proves the stated result about $T_n$.
\end{proof}

\section{Elliptic families of bilabelled increasing trees}

In the following we present several \textit{elliptic families} of bilabelled increasing trees: we discuss in detail increasing bilabellings of strict-binary trees with degree-weight generating function $\varphi(t)=1+t^2$ and of unordered even-degree trees with degree-weight generating function $\varphi(t)=\cosh(t)$. Moreover, we present a general approach to uncover the elliptic nature of arbitrary families of binary and ternary bilabelled increasing trees.

An elliptic function is a function that is meromorphic in the whole complex plane and that is doubly periodic. 
A standard way of presenting the theory of elliptic functions is the one proposed by Eisenstein and Weierstrass, 
where elliptic functions are defined as sums of rational functions taken over lattices. 
Given a pair of complex numbers $\omega_1$ and $\omega_2$ generating a lattice $\Omega=\Z\omega_1 + \Z\omega_1$, the Weierstrass-$\wp$ function, see \cite{KoecherKrieg}, $\wp(z)=\wp(z\mid\omega_1,\omega_2)$ is 
by construction a double-periodic function with periods $\omega_1,\omega_2$ defined as
\begin{equation}
  \label{eqn:WPmerosum}
  \wp(z\mid \omega_1,\omega_2)=\frac{1}{z^2}+\sum_{0\neq \omega \in \Omega}\left(\frac{1}{(z-\omega)^2}-\frac{1}{\omega^2}\right).
\end{equation}
It satisfies the first-order differential equation 
\begin{equation}
\label{WeierstrassP}
(\wp'(z))^2 = 4(\wp(z))^3-g_2\wp(z)-g_3, 
\end{equation}
and also the second-order differential equation
\[
2\wp''(z)=12(\wp(z))^2-g_2.
\]
Here $g_2$ and $g_3$ are the so-called Weierstrass-invariants defined by 
\begin{equation*}
\begin{split}
g_2&= 60\sum_{0\neq \omega \in \Omega} \frac1{\omega^{4}},
\qquad g_3=140\sum_{0\neq \omega \in \Omega} \frac1{\omega^{6}},
\end{split}
\end{equation*}
which can be used to specify the Weierstrass-$\wp$ function alternatively in terms of its invariants: $\wp(z)=\wp(z;g_2,g_3)$;
in this case we use the notation $\omega_1(g_2,g_3)$ and $\omega_2(g_2,g_3)$ for the resulting periods.
All solutions of the differential equation~\eqref{WeierstrassP} have the form $\wp(z+C)$, where the constant $C$ depends on the initial value. 

\smallskip

We also introduce the lemniscate sine function $\lemsine(z)$, see~\cite{KoecherKrieg}, which is an elliptic function defined as the inverse of the Fagnano elliptic integral:
\[
\lemsine(z) = s,\qquad z=\int_{0}^{s}\frac{1}{\sqrt{1-t^4}}dt.
\]
The coefficients $S_{n}$ in the series expansion of the lemniscate sine function:
\begin{equation}\label{eqn:lemsine_coeff}
  \lemsine(z) = \sum_{n \ge 1} S_{n} \frac{z^{n}}{n!},
\end{equation}
appear in the OEIS as \href{https://oeis.org/A104203}{A104203}. 
Moreover, $\varpi$ denotes the so-called lemniscate constant:
\begin{equation}
\label{HookBijIILemniKonst}
\varpi = 2\int_{0}^{1}\frac{1}{\sqrt{1-t^4}}dt = \frac{\Gamma(\frac{1}{4})^2 }{ 2 \sqrt{2\pi}}.
\end{equation}

\subsection{Strict-binary bilabelled increasing trees\label{StrictBinary}}
\begin{theorem}
The exponential generating function $T(z)$ of the number $T_{n}$ of strict-binary bilabelled increasing trees with $2n$ labels and degree-weight generating function $\varphi(t)=1+t^2$ is given in terms of the Weier\-strass-$\wp$ function $\wp(z; g_2,g_3)$ as follows:
\begin{equation*}
T(z)=\frac{6}{\sqrt{3}} \cdot \wp(3^{-\frac14}z+\varpi; -1 , 0).
\end{equation*}
Alternatively, $T(z)$ can be expressed in terms of the square of the lemniscate sine function $\lemsine(z)$:
\[
T(z)=\sqrt{3} i \, \lemsine^2\Big(\frac{z}{3^{\frac{1}{4}} (1+i)}\Big).
\]

\smallskip

The numbers $T_n$ satisfy the recurrence relation
\[
T_n=\sum_{k=1}^{n-2}\binom{2n-2}{2k}T_k T_{n-1-k},\quad \text{for} \enspace n \ge 2, \quad \text{with} \enspace T_1=1.
\]
Moreover, they can be expressed as a lattice sum in the following way:
\[
T_n=\frac{(2n+1)! \, 2^{3n+4}\pi^{n+1}}{3^{\frac{n-1}2}\Gamma^{4n+4}(\frac14)}
\sum_{n_1,n_2\in\Z}\frac{1}{(1+n_1+n_2+i(n_1-n_2))^{2n+2}}.
\]
\end{theorem}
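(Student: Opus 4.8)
The plan is to start from the second-order equation $T''(z)=\varphi(T(z))=1+T(z)^2$ with $T(0)=T'(0)=0$, which by Proposition~\ref{HookBijII-DGLProp} has the first integral $T'(z)^2=2\Phi(T)=2T+\tfrac23 T^3$, since $\Phi(x)=\int_0^x(1+t^2)\,dt=x+\tfrac13 x^3$. For the Weierstrass form I would insert the ansatz $T(z)=a\,\wp(bz+C;g_2,g_3)$ and use \eqref{WeierstrassP} to rewrite $T'(z)^2=a^2b^2\bigl(4\wp^3-g_2\wp-g_3\bigr)$. Matching this against $\tfrac23 a^3\wp^3+2a\wp$ coefficientwise forces $a=6b^2$, $g_3=0$ and $g_2=-2/(ab^2)$; imposing $g_2=-1$ then gives $ab^2=2$, hence $b^4=\tfrac13$, so $b=3^{-\frac14}$ and $a=6/\sqrt3$, exactly the claimed prefactor and invariants. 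The cubic $4w^3+w=w(4w^2+1)$ has roots $0,\pm i/2$, so $0$ is a branch value and $\wp$ has a double zero at precisely one half-period; taking $C$ to be that half-period yields $\wp(C)=\wp'(C)=0$, i.e.\ $T(0)=T'(0)=0$, which singles out the solution.

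It then remains to identify this half-period with the real lemniscate constant $\varpi$. Along the real segment on which $\wp$ decreases from $+\infty$ (at the pole) to $0$, separation of variables gives the half-period as $\int_0^{\infty}\frac{dw}{\sqrt{4w^3+w}}$; the substitutions $w=\tfrac12 s$ and then $u=s^2$ turn this into $\tfrac1{\sqrt2}\cdot\tfrac12\,B(\tfrac14,\tfrac14)=\frac{\Gamma(1/4)^2}{2\sqrt{2\pi}}=\varpi$, so $C=\varpi$ as stated. The lemniscate-sine representation I would then obtain from uniqueness of the initial value problem: setting $U(z)=\sqrt3\,i\,\lemsine^2\!\bigl(z/(3^{\frac14}(1+i))\bigr)$ and using $(\lemsine')^2=1-\lemsine^4$, a direct differentiation (with $\beta^2=3^{\frac12}(1+i)^2=2\sqrt3\,i$) gives $U'(z)^2=\tfrac23U^3+2U$ together with $U(0)=U'(0)=0$, so $U\equiv T$. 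The recurrence is immediate: writing $T(z)=\sum_{n\ge1}T_n z^{2n}/(2n)!$ and reading off $[z^{2m}]$ in $T''=1+T^2$ gives $T_{m+1}/(2m)!=\sum_{k+\ell=m}T_kT_\ell/((2k)!(2\ell)!)$ for $m\ge1$, which with $n=m+1$ is the asserted $T_n=\sum_{k=1}^{n-2}\binom{2n-2}{2k}T_kT_{n-1-k}$, the case $m=0$ giving $T_1=1$.

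For the lattice sum I would use that, $\varpi$ being a half-period, $\wp(\cdot+\varpi)$ is even, so its Taylor expansion yields $T_n=(2n)![z^{2n}]T=a\,b^{2n}\,\wp^{(2n)}(\varpi)$. Differentiating the Eisenstein representation \eqref{eqn:WPmerosum} termwise $2n$ times gives $\wp^{(2n)}(\varpi)=(2n+1)!\sum_{\omega\in\Omega}(\varpi-\omega)^{-(2n+2)}$. Since $g_3=0$ the lattice has complex multiplication by $i$, hence $\Omega=\lambda\Z[i]$ with $\lambda=\rho e^{i\pi/4}$, and its real half-period is $\lambda(1-i)/2=\rho/\sqrt2$, which the integral above identifies with $\varpi$. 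The key algebraic step is then to factor, for $\omega=\lambda(m+ni)$, $\varpi-\omega=\varpi\bigl[(1-m+n)-i(m+n)\bigr]$ (equivalently $\omega/\varpi=(1+i)(m+ni)$), whose bracket is a Gaussian integer of opposite parity; the reindexing $n_1=-m$, $n_2=n$ turns it into exactly $1+n_1+n_2+i(n_1-n_2)$, so that $\sum_{\omega}(\varpi-\omega)^{-(2n+2)}=\varpi^{-(2n+2)}\sum_{n_1,n_2}(1+n_1+n_2+i(n_1-n_2))^{-(2n+2)}$. Collecting the constants $a=6/\sqrt3$, $b^{2n}=3^{-n/2}$ and $\varpi^{-(2n+2)}=2^{3n+3}\pi^{n+1}/\Gamma(1/4)^{4n+4}$ collapses all factors of $i$ (consistent with the sum being real, as it is invariant under $n_1\leftrightarrow n_2$) and produces precisely $\frac{(2n+1)!\,2^{3n+4}\pi^{n+1}}{3^{(n-1)/2}\Gamma^{4n+4}(1/4)}$ times the lattice sum.

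I expect the genuine difficulties to be concentrated in the last paragraph: correctly pinning down which half-period serves as the shift $C$ and, above all, evaluating the associated real period as $\varpi$ through the Beta-function reduction, since this is where $\Gamma(1/4)$ enters. The subsequent reindexing that transforms $\lambda(m+ni)$ into the exact linear form $1+n_1+n_2+i(n_1-n_2)$, and the attendant bookkeeping of the powers of $2$, $3$ and $i$, is elementary but error-prone and must be carried out with the real representative $\rho/\sqrt2=\varpi$ of the half-period (rather than with $\lambda$) in order to clear the factors of $i$ and land on the stated constant.
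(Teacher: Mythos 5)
Your proposal is correct and follows the paper's strategy in all essentials: the first integral $(T')^2=\tfrac23T^3+2T$, the matching against $(\wp')^2=4\wp^3-g_2\wp-g_3$ (you fold the paper's two-step ``find $C_1=6$, $g_2=-\tfrac13$, then rescale by homogeneity'' into a one-step ansatz $a\,\wp(bz+C)$, which is equivalent), the identification of the shift $C$ as the half-period carrying the double zero of $\wp$ (both arguments rest on $e_3=0$ being the only real branch value when $g_2<0$, $g_3=0$), the same coefficient extraction for the recurrence, and the same termwise differentiation of the Eisenstein series at $\varpi$ for the lattice sum; your reindexing $\varpi-\omega=\varpi\bigl[1+n_1+n_2+i(n_1-n_2)\bigr]$ and the bookkeeping of the constants $a=6/\sqrt3$, $b^{2n}=3^{-n/2}$, $\varpi^{-(2n+2)}=2^{3n+3}\pi^{n+1}/\Gamma(\tfrac14)^{4n+4}$ are all correct. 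One genuine difference: where the paper simply cites Abramowitz--Stegun for the pseudo-lemniscatic periods $\omega_{1,2}=(1\pm i)\varpi$, you evaluate the real half-period self-containedly via $\int_0^\infty dw/\sqrt{4w^3+w}=\tfrac1{2\sqrt2}B(\tfrac14,\tfrac14)=\varpi$ and use complex multiplication by $i$ (forced by $g_3=0$) to pin down the lattice. This buys independence from the table --- and is exactly where $\Gamma(\tfrac14)$ enters --- at the cost of asserting rather than proving the $45^{\circ}$ orientation of the square lattice (i.e.\ that $g_2<0$ excludes the aligned, lemniscatic lattice); that gloss is no worse than the paper's table lookup.

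The one point you should tighten is the $\lemsine^2$ representation. The paper verifies the \emph{second-order} equation $g''=2-6g^2$ for $g=\lemsine^2$ and matches $T=a\,g(bz)$ against $T''=1+T^2$, where Cauchy--Lipschitz uniqueness genuinely applies. You instead verify the \emph{first-order} equation $U'^2=\tfrac23U^3+2U$ with $U(0)=U'(0)=0$ (your computation with $\beta^2=2\sqrt3\,i$ is correct) and then invoke ``uniqueness of the initial value problem''. As stated that inference is invalid: the right-hand side is not Lipschitz at the origin, and $U\equiv 0$ is a spurious solution of the same problem. The gap is routine to close --- differentiate your identity to obtain $U''=1+U^2$ wherever $U'\neq0$, extend by analyticity, and apply second-order uniqueness; or observe that the Taylor coefficients of any non-zero analytic solution of the first-order problem are uniquely forced --- but it needs to be said, and avoiding it is precisely why the paper argues through the second-order equation.
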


\begin{remark}
The sequence $(T_n)$ starts with 
\begin{equation*}
 (T_n)=(1,0,6,0,336,0,77616,0,50916096,0,\dots),
\end{equation*}
and in the OEIS it appears as \href{https://oeis.org/A144849}{A144849}. 
The relation of these numbers to the (square of the) lemniscate sine function $\lemsine(z)$
has been observed before by Michael Somos (see \href{https://oeis.org/A144849}{A144849}).
\end{remark}

\begin{proof}
By Proposition~\ref{HookBijII-DGLProp} the exponential generating function $T(z)$ satisfies
\begin{equation}
   \label{HookBijII-eqnStrict1}
   T''(z) = 1+T^2(z), \quad T(0)=0,\quad T'(0)=0.
\end{equation}
Thus, by extracting coefficients we obtain directly the stated recurrence relation:
\[
T_n=(2n-2)![z^{2n-2}]T(z)=(2n-2)!\sum_{k=1}^{n-1}[z^{2k}]T(z)[z^{2n-2-2k}]T(z).
\]
Let $\Phi(x)=\int_0^{x}(1+t^2)=x+\frac{x^3}3$.
The relation to the Weierstrass-$\wp$ function $\wp(z; g_2,g_3)$ is a direct consequence of the first order equation:
\begin{equation}
  \label{eqn:TzElliptDEQ}
  (T'(z))^2=2\Phi\big(T(z)\big)=\frac{2}3T^3(z)+2T(z),\quad T(0)=0,
\end{equation}
and we obtain that
\[
T(z)=C_1\cdot \wp(z+C_2;g_2,g_3).
\]
Obviously, $g_3=0$. In order to identify $g_2$ we compare \eqref{WeierstrassP} and \eqref{eqn:TzElliptDEQ}, which gives
\begin{equation*}
\begin{split}
(T'(z))^2&=C_1^2 (\wp'(z+C_2;g_2,0))^2\\
&=\frac{2}3\cdot C_1^3 \wp^3(z+C_2;g_2,0)+2C_1\wp(z+C_2;g_2,0)\\
&=4 C_{1}^{2} \wp^3(z+C_2;g_2,0)- C_{1}^{2} g_2\wp(z+C_2;g_2,0).
\end{split}
\end{equation*}
Consequently, we obtain the pair of relations
\[
\frac{2}{3} C_{1}^{3} = 4 C_{1}^{2}, \qquad C_{1}^{2} \, g_2 = -2 C_1,
\]
such that $C_1=6$ and $g_2=-\frac13$ leading to $T(z)= 6\cdot \wp(z+C_2; -\frac13, 0)$.

The case $g_2<0$ and $g_3=0$ can be reduced to the so-called pseudo-lemniscatic case with $g_2=-1$ and $g_3=0$ for which the periods are known, see \cite{AbramowitzStegun1972}.
By the homogeneity relation
\[
\wp(z;g_2, 0)=|g_2|^{\frac12}\wp(z\cdot|g_2|^{\frac14};-1, 0)
\]
we obtain
\[
T(z)=\frac{6}{\sqrt{3}} \cdot \wp(3^{-\frac14}(z+C_2); -1, 0).
\]
Hence we know, see \cite[page 662]{AbramowitzStegun1972}, that $\omega_1=(1+i)\varpi$ and $\omega_2=(1-i)\varpi$,
with $\varpi$ denoting the lemniscate constant~\eqref{HookBijIILemniKonst}.

It remains to adapt the constant $C_2\in\C$, where we use that $T(z)$ has a double zero at $z=0$. In the following we use the standard notation $e_k=\wp(\omega_k/2)$, for $k=1, 2, 3$, with $\omega_3=\omega_1+\omega_2$ the sum of the periods. 
It is known, see \cite[p.~29]{KoecherKrieg}, that $\wp(z)-e_k$ has a double zero at $z=\omega_k/2$ for $k=1,2,3$. 
The values $e_k$ are alternatively defined as the roots of a third order polynomial: $4X^3-g_2X-g_3=4(X-e_1)(X-e_2)(X-e_3)$. 
In our case $g_3=0$ and $-g_2=1>0$, such that $e_1,e_2\in i\cdot \R$ with $\overline{e_2}=e_1$, and $e_3=0$. 
Consequently, $\wp(z)-e_3=\wp(z)$ has a double zero at 
\begin{equation}
\label{HookBijII-eqnStrict2}
\omega_3/2=(\omega_1+\omega_2)/2=\varpi,
\end{equation}
which implies $C_2=3^{\frac14}\varpi$. This proves the stated result.

\medskip

The lattice sum expression for $T_n$, $n\ge 1$, is now readily obtained using
\begin{equation}
[z^{2n}]\frac1{(z-\omega)^2}=[z^{2n}]\frac1{\omega^2(1-\frac{z}{\omega})^2}=\frac{2n+1}{\omega^{2n+2}},
\label{HookBijIIExtraction}
\end{equation}
and taking into account the periods $\omega_1=(1+i)\varpi$ and $\omega_2=(1-i)\varpi$.
First, we obtain
\begin{equation*}
\begin{split}
T_n&=(2n)!\cdot [z^{2n}]\frac{6}{3^{\frac12}}\cdot \wp(3^{-\frac14}z+\varpi; -1 , 0)\\
&=\frac{2(2n)!}{3^{\frac{n-1}{2}}}\cdot [t^{2n}]\wp(t+\varpi\mid  \omega_1 ,\omega_2).
\end{split}
\end{equation*}
Extracting coefficients from \eqref{eqn:WPmerosum} using~\eqref{HookBijIIExtraction} and replacing the lemniscate constant $\varpi$ by its explicit expression~\eqref{HookBijIILemniKonst} yields the enumeration formula for $T_{n}$.

\medskip

The relation to the lemniscate sine function could be obtained from the expression for the Weierstrass-$\wp$ function and its relation 
to the Jacobi elliptic function. However, we directly show that $g(z)=\lemsine^2(z)$ satisfies the differential equation
\[
g''(z)=2-6g^2(z),\quad g(0)=g'(0)=0.
\]
Since the lemniscate sine function has a power series expansion around $z=0$ with $\lemsine(0)=0$ and $\lemsine'(0)\neq 0$, 
$g(z)=\lemsine^2(z)$ satisfies $g(0)=g'(0)=0$. By its definition the derivative of the lemniscate sine function is given
\begin{equation}\label{eqn:LemSineDEQ}
\lemsine'(z)=\bigg(\frac{1}{\sqrt{1-\lemsine^4(z)}}\bigg)^{-1}=\sqrt{1-\lemsine^4(z)}.
\end{equation}
Consequently, 
\[
(\lemsine'(z))^2=1-\lemsine^4(z),
\]
and thus 
\[
2\lemsine'(z)\lemsine''(z)=-4\lemsine'(z)\lemsine^3(z),\quad \text{such that} \quad 
\lemsine''(z)=-2\lemsine^3(z).
\]
Hence, 
\begin{align*}
g''(z) & = \big(\lemsine^2(z))''
=2(\lemsine'(z))^2+2\lemsine''(z)\lemsine(z)
= 2-2\lemsine^4(z)-4\lemsine^4(z)\\
& = 2-6g^2(z),
\end{align*}
which proves that $g(z)=\lemsine^2(z)$ satisfies the stated differential equation. 
Setting $T(z)=a\cdot g(bz)$, $a,b\in\C$, leads to the system of equations
\begin{equation*}
2ab^2=1,\quad -\frac{6b^2}{a}=1. 
\end{equation*}
This system is readily solved and we obtain the stated result.
\end{proof}

The family $\mathcal{S}$ of strict-binary trees corresponds to weighted ordered trees with degree-weight generating function $\varphi(t)=1+t^2$ and, according to Theorem~\ref{HookBijIITheHook}, the enumeration result for the number $T_{n}$ of strict-binary bilabelled increasing trees of size $n$ can be translated into a hook-length formula for ordered trees $\mathcal{O}$. However, we prefer to state this formula directly in terms of the family $\mathcal{S}$.

\begin{coroll}
\label{HookBijCorollElli}
The family $\mathcal{S}$ of strict-binary trees satisfies the following hook-length formula:
\begin{align*}
\sum_{T\in\mathcal{S}(n)}  \frac{1}{\prod_{v \in T}\left(2h_{v}(2h_{v}-1)\right)}
&= 
\frac{(2n+1) \, 2^{3n+4}\pi^{n+1}}{3^{\frac{n-1}2}\Gamma^{4n+4}(\frac14)}\\
&\quad \times \sum_{n_1,n_2\in\Z}\frac{1}{(1+n_1+n_2+i(n_1-n_2))^{2n+2}}.
\end{align*}
\end{coroll}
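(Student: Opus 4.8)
The plan is to specialize the general hook-length formula of Theorem~\ref{HookBijIITheHook} to the degree-weight sequence underlying strict-binary trees and then to insert the explicit lattice-sum evaluation of $T_n$ obtained in the preceding theorem of Section~\ref{StrictBinary}. Since the family $\mathcal{S}$ of strict-binary trees corresponds to the degree-weight generating function $\varphi(t)=1+t^2$, the only nonzero degree-weights are $\varphi_0=\varphi_2=1$, with $\varphi_j=0$ for all $j\notin\{0,2\}$.

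First I would apply Theorem~\ref{HookBijIITheHook}, which reads
\[
\sum_{T \in \mathcal{O}(n)} \prod_{v \in T} \left(\frac{\varphi_{\odeg(v)}}{2h_{v}(2h_{v}-1)}\right) = \frac{T_{n}}{(2n)!}.
\]
The crucial observation is that the factor $\prod_{v\in T}\varphi_{\odeg(v)}$ vanishes for every ordered tree $T$ that contains a node of out-degree different from $0$ or $2$; consequently the sum over $\mathcal{O}(n)$ collapses onto the subfamily $\mathcal{S}(n)$ of strict-binary trees, on which every node contributes $\varphi_{\odeg(v)}=1$ and hence $\prod_{v\in T}\varphi_{\odeg(v)}=1$. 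This reduces the left-hand side to $\sum_{T\in\mathcal{S}(n)} \prod_{v \in T}\frac{1}{2h_{v}(2h_{v}-1)}$, which is precisely the left-hand side of the desired identity.

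It then remains to substitute the explicit expression for $T_n$ established for strict-binary bilabelled increasing trees and to simplify. Using $(2n+1)!/(2n)! = 2n+1$, the prefactor $\frac{(2n+1)!}{(2n)!}\cdot\frac{2^{3n+4}\pi^{n+1}}{3^{(n-1)/2}\Gamma^{4n+4}(\tfrac14)}$ becomes exactly $\frac{(2n+1)\,2^{3n+4}\pi^{n+1}}{3^{(n-1)/2}\Gamma^{4n+4}(\tfrac14)}$, while the lattice sum $\sum_{n_1,n_2\in\Z}(1+n_1+n_2+i(n_1-n_2))^{-2n-2}$ is carried over unchanged, yielding the stated formula. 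I do not expect a genuine obstacle here: the argument is a direct specialization of the general hook-length formula, and the only point requiring care is the bookkeeping verifying that the degree-weight product correctly restricts the outer sum from all ordered trees to the strict-binary ones and evaluates to $1$ on the latter.
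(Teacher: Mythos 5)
Your proposal is correct and follows exactly the paper's (implicit) argument: the paper likewise obtains the corollary by specializing Theorem~\ref{HookBijIITheHook} to $\varphi(t)=1+t^2$, so that the weight $\prod_{v}\varphi_{\odeg(v)}$ restricts the sum from $\mathcal{O}(n)$ to $\mathcal{S}(n)$, and then inserting the lattice-sum expression for $T_n$ from the strict-binary theorem, with $(2n+1)!/(2n)!=2n+1$ giving the stated prefactor.
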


\subsection{Unordered even-degree bilabelled increasing trees}
\begin{theorem}
The derivative of the exponential generating function $T(z)$ of the number $T_{n}$ of unordered even-degree bilabelled increasing trees with $2n$ labels and degree-weight generating function $\varphi(t)=\cosh(t)$ is given in terms of the lemniscate sine function as follows:
\begin{equation*}
  T'(z) = (1-i) \, \lemsine\!\Big(\frac{(1+i) z}{2}\Big).
\end{equation*}
The numbers $T_n$ are given in terms of the coefficients $S_n$ occurring in the Taylor expansion of the 
lemniscate sine function \eqref{eqn:lemsine_coeff} via
\begin{equation*}
  T_n = \left(\frac{i}{2}\right)^{n-1} \, S_{2n-1}, \qquad n \ge 1,
\end{equation*}
and they satisfy the recurrence relation
\begin{equation*}
  T_{n+2} = \frac{1}{2} \sum_{j+k+\ell=n-1} \binom{2n+1}{2j+1, \, 2k+1, \, 2\ell+1} T_{j+1} T_{k+1} T_{\ell+1}, \quad \text{for} \enspace n \ge 0,
\end{equation*}
with $T_{1} = 1$.
\end{theorem}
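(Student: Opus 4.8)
The plan is to work throughout with the differential equations furnished by Proposition~\ref{HookBijII-DGLProp}. Here $\varphi(t)=\cosh(t)$ gives $\Phi(x)=\int_0^x\cosh(t)\,dt=\sinh(x)$, so that $T(z)$ satisfies the second-order equation $T''(z)=\cosh(T(z))$ together with the first-order relation $(T'(z))^2=2\sinh(T(z))$, subject to $T(0)=T'(0)=0$. Since the stated formula describes $T'(z)$ rather than $T(z)$ itself, I would first set $u(z)=T'(z)$, so that $u'(z)=T''(z)=\cosh(T(z))$; squaring and using $\cosh^2-\sinh^2=1$ together with $\sinh(T)=\tfrac12 u^2$ gives the autonomous equation $(u'(z))^2=1+\tfrac14\,u^4(z)$.

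This is to be compared with the defining relation of the lemniscate sine, $(\lemsine'(z))^2=1-\lemsine^4(z)$, recorded in~\eqref{eqn:LemSineDEQ}. The key step is the ansatz $u(z)=a\,\lemsine(bz)$ with $a,b\in\C$: substituting and matching the constant term and the $\lemsine^4$ term forces the two relations $a^2b^2=1$ and $a^4=-4$. Choosing the fourth root $a=1-i$ (so $a^2=-2i$) gives $b^2=\tfrac{i}{2}$ and hence $b=\tfrac{1+i}{2}$, reproducing $u(z)=(1-i)\,\lemsine\big(\tfrac{(1+i)z}{2}\big)$. To pin down this particular solution among the branches, I would check the initial data: $\lemsine(0)=0$ yields $u(0)=0=T'(0)$, while $(1-i)\cdot\tfrac{1+i}{2}=1$ together with $\lemsine'(0)=1$ yields $u'(0)=1=\cosh(0)=T''(0)$, fixing the solution uniquely as a power series. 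This paragraph is the delicate part of the argument, since the coefficient matching determines $a$ and $b$ only up to roots, and one must verify through the initial conditions that the specific pair $a=1-i$, $b=\tfrac{1+i}{2}$ selects the analytic solution agreeing with $T'$.

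For the coefficient formula I would extract Taylor coefficients. Since $\lemsine$ is an odd function (its inverse, the Fagnano integral, has even integrand $1/\sqrt{1-t^4}$), one has $S_m=0$ for even $m$, so only odd indices survive. Writing $T'(z)=\sum_{n\ge1}T_n\frac{z^{2n-1}}{(2n-1)!}$ and expanding $(1-i)\lemsine\big(\tfrac{(1+i)z}{2}\big)=(1-i)\sum_{m\ge1}S_m\frac{(1+i)^m}{2^m\,m!}z^m$, comparison of the coefficient of $z^{2n-1}$ gives $T_n=(1-i)(1+i)^{2n-1}2^{-(2n-1)}S_{2n-1}$. Using $(1-i)(1+i)=2$ and $(1+i)^2=2i$, this simplifies to $T_n=(i/2)^{n-1}S_{2n-1}$, as claimed.

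Finally, for the recurrence I would differentiate the second-order equation once more: $T'''(z)=\sinh(T(z))\,T'(z)$, and then substitute $\sinh(T)=\tfrac12(T')^2$ to obtain the clean autonomous relation $T'''(z)=\tfrac12\,(T'(z))^3$. Extracting the coefficient of $z^{2n+1}$ from both sides then yields the result: on the left $T'''$ contributes $T_{n+2}/(2n+1)!$ (the term with $2m-3=2n+1$), while on the right the cube of the odd series $\sum_{j\ge0}T_{j+1}\frac{z^{2j+1}}{(2j+1)!}$ produces a sum over $j+k+\ell=n-1$. Multiplying through by $(2n+1)!$ and recognising $\binom{2n+1}{2j+1,\,2k+1,\,2\ell+1}$ gives exactly the stated recurrence. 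The remaining steps after the ansatz are routine coefficient extractions.
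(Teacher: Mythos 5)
Your argument is correct and matches the paper's proof in all essential respects: both reduce to the autonomous first-order equation $(u')^2 = 1 + \tfrac14 u^4$ for $u = T'$, make the ansatz $u(z) = a\,\lemsine(bz)$, determine $a,b$ by coefficient matching against $(\lemsine')^2 = 1 - \lemsine^4$ together with the initial conditions, extract Taylor coefficients to get $T_n = (i/2)^{n-1}S_{2n-1}$, and derive the recurrence from $T'''(z) = \tfrac12\,(T'(z))^3$ (the paper's $U'' = U^3/2$). The only cosmetic difference is that you obtain the first-order equation directly by squaring $u' = \cosh(T)$ and using $\cosh^2 = 1+\sinh^2$, whereas the paper differentiates once more and integrates back; this changes nothing of substance, and your explicit verification of the initial conditions to pin down the branch is if anything slightly more careful than the paper's.
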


\begin{remark}
The sequence $(T_n)$ starts with 
\[
(T_n)=(1,0,3,0,189,0,68607,0,\dots);
\]
currently the sequence itself does not appear in the OEIS, but the closely related sequence $(S_{n})$ of the coefficients of the lemniscate sine function can be found as \href{https://oeis.org/A104203}{A104203}.
\end{remark}

\begin{proof}
The generating function $T(z)$ satisfies the differential equation
\begin{equation}\label{eqn:EvenDegDEQ}
  T''(z) = \cosh(T(z))
\end{equation}
with $T(0) = T'(0) = 0$; thus $T''(0) = 1$. Multiplying by $T'(z)$ and integrating the resulting equation yields
\begin{equation*}
  \frac{T'(z)^{2}}{2} = \sinh(T(z)) + \widetilde{C},
\end{equation*}
with a certain constant $\widetilde{C}$. Due to the initial conditions we get $\widetilde{C} = 0$, and thus
\begin{equation*}
  T'(z)^{2} = 2 \sinh(T(z)).
\end{equation*}
Plugging this result into the derivative of \eqref{eqn:EvenDegDEQ}, we get
\begin{equation*}
  T'''(z) = \sinh(T(z)) T'(z) = \frac{T'(z)^{3}}{2}.
\end{equation*}
Let us denote $U(z) := T'(z)$; then $U(z)$ satisfies the differential equation
\begin{equation}\label{eqn:EvenDegDEQ2}
  U''(z) = \frac{U^{3}(z)}{2},
\end{equation}
with $U(0) = 0$ and $U'(0)=1$. Multiplying \eqref{eqn:EvenDegDEQ2} by $U'(z)$ and integrating the resulting equation gives
\begin{equation*}
  U'(z)^{2} = \frac{U(z)^{4}}{4} + C,
\end{equation*}
and due to the initial conditions the constant is given by $C=1$. Thus, $U(z)$ satisfies the differential equation
\begin{equation}\label{eqn:EvenDegDEQ3}
  U'(z)^{2} = \frac{U(z)^{4}}{4} + 1, \qquad U(0) = 0.
\end{equation}
Comparing \eqref{eqn:EvenDegDEQ3} with the differential equation \eqref{eqn:LemSineDEQ} for the lemniscate sine function, shows that $U(z)$ can be written in the form
\begin{equation*}
  U(z) = a \lemsine(b z),
\end{equation*}
with certain constants $a$, $b$. Differentiating yields
\begin{equation*}
  (\lemsine'(b z))^{2} = \frac{1}{a^{2} b^{2}} + \frac{a^{2}}{4 b^{2}} \lemsine(b z)^{4},
\end{equation*}
which gives the systems of equations
\begin{equation*}
  1 = \frac{1}{a^{2} b^{2}}, \quad -1=\frac{a^{2}}{4 b^{2}}.
\end{equation*}
Solving this system, immediately leads to the stated result expressing $T'(z) = U(z)$ in terms of $\lemsine(z)$. Furthermore, extracting coefficients gives the relation between $T_{n}$ and $S_{n}$:
\begin{align*}
  T_{n} & = (2n-1)! [z^{2n-1}] T'(z) = (2n-1)! [z^{2n-1}] (1-i) \, \lemsine\!\Big(\frac{(1+i)z}{2}\Big)\\
	& = (1-i) \Big(\frac{1+i}{2}\Big)^{2n-1} S_{2n-1} = \Big(\frac{i}{2}\Big)^{n-1} S_{2n-1}.
\end{align*}
Moreover, the recurrence relation for $T_{n}$ can be obtained easily by extracting coefficients from the differential equation \eqref{eqn:EvenDegDEQ2}.
\end{proof}

\subsection{Binary and ternary bilabelled increasing trees}
The relation between bilabelled increasing tree families and ellip\-tic functions
extends to more general degree-weight generating functions. In general, by the theory of elliptic functions, all binary bilabelled increasing trees families with degree-weight generating functions $\varphi(t)$ of the form  $\varphi(t)=\varphi_2t^2+\varphi_1t + \varphi_0$, with $\varphi_0,\varphi_2>0$ and $\varphi_1\ge 0$, can be represented in terms of the Weierstrass-$\wp$ function.
We have $\Phi(t)=\frac{\varphi_2}3t^3+\frac{\varphi_1}2t^2 + \varphi_0t$ and obtain by Proposition~\ref{HookBijII-DGLProp}
the differential equation 
\[
\big(T'(z)\big)^2=\frac{2\varphi_2}{3} T^3(z) + \varphi_1 T^2(z) + 2\varphi_0 T(z). 
\]
We can reduce the equation to a depressed cubic using $T(z)=f(z)-\frac{\varphi_1}{2\varphi_2}$, and then identify the invariants $g_2,g_3$ similar to the analysis of the previous elliptic tree families. 
We obtain the result
\[
T(z)=\frac{6}{\varphi_2}\wp(z+C;g_2,g_3)-\frac{\varphi_1}{2\varphi_2},
\]
with invariants $g_2,g_3$ and constant $C$ determined by 
\[
g_2=-\frac13 \varphi_0 \varphi_2 +\frac1{12}\varphi_1^2,\quad 
g_3=-\frac1{216}\varphi_1^3+\frac1{36}\varphi_0\varphi_1\varphi_2,\quad
\wp(C; g_2, g_3)=\frac{\varphi_1}{12}.
\]

In particular, for binary bilabelled increasing trees with $\varphi(t)=(1+t)^2=1+2t+t^2$, the sequence $(T_n)$ of the number of trees starts with
\begin{equation*}
  (T_n)=(1,2,10,80,1000,17600,418000,\dots);
\end{equation*}
in the OEIS these numbers appear as \href{https://oeis.org/A063902}{A063902}. They satisfy the recurrence relation
\begin{equation*}
  T_{n} = 2 T_{n-1} + \sum_{k=1}^{n-2} \binom{2n-2}{2k} T_{k} T_{n-1-k}, \quad \text{for} \enspace n \ge 2, \quad \text{with} \enspace T_{1} = 1,
\end{equation*}
which can be shown easily by extracting coefficients from the second order differential equation for $T(z)$.

\medskip

Moreover, all ternary bilabelled increasing tree families with degree-weight generating functions $\varphi(t)$ of the form 
$\varphi(t)=\varphi_3t^3+\varphi_2t^2+\varphi_1t + \varphi_0$, with $\varphi_0,\varphi_3>0$ and $\varphi_1,\varphi_2\ge 0$, can be expressed as a reciprocal of the Weierstrass-$\wp$ function.
First, we obtain 
$$\Phi(t)=\sum_{k=1}^{4}\Phi_k t^k=\frac{\varphi_3}4t^4+\frac{\varphi_2}3t^3+\frac{\varphi_1}2t^2 + \varphi_0t.$$ 
The polynomial has the root zero and by Descartes' rule of signs also one negative root, which we denote by $t_0\in (-\infty,0)$. 
We can use a classical reduction, see~\cite[page 2]{KoecherKrieg}, of the differential equation
\[
\big(T'(z)\big)^2=2\Phi(T(z))=\frac{\varphi_3}2T^4(z)+\frac{2\varphi_2}3T(z)^3+\varphi_1T(z)^2 + 2\varphi_0T(z), \quad T(0)=0,
\]
to a differential equation for $R(z)=\frac{1}{T(z)-t_0}$:
\[
\big(R'(z)\big)^2=q_3R^3(z)+q_2R^2(z)+q_1R(z)+q_0,\quad R(0)=-\frac{1}{t_0},
\]
with 
\begin{equation*}
  q(t)=\sum_{k=0}^{3}q_k t^k=\sum_{k=0}^{3} 2\frac{\Phi^{(4-k)}(t_0)}{(4-k)!} t^k.
\end{equation*}
Thus, for a representation of $R(z)$ as a Weierstrass-$\wp$ function (and a representation of $T(z)$ as a reciprocal of $\wp$, respectively) one may proceed along the lines as carried out before for families of binary bilabelled increasing trees. We omit these more involved explicit computations and the corresponding results.

\section{A reverse engineering approach\label{sec:ReverseEngineering}}

We have observed interesting connections of the generating functions of bilabelled increasing trees to special functions and elliptic functions, respectively.
However, it turns out that besides the family of increasing bilabelled 3-bundled trees there does not seem to exist simple closed-from expressions for the numbers $T_n$
associated to the most common degree-weight generating functions $\varphi(t)=e^t$, $\varphi(t)=1/(1-t)^\alpha$ with $\alpha>0$, or $\varphi(t)=(1+t)^d$, $d\ge 2$. 
We can use a different approach in order to obtain simple closed form solutions for the number of bilabelled increasing trees with $n$ nodes and label set $[2n]$. Instead of choosing the degree-weight generating function $\varphi(t)$ and studying the differential equations~\eqref{HookBijII-DGL1} and~\eqref{HookBijII-DGL2},
we \emph{select first}\footnote{We follow the maxim of Jacobi: ``man muss immer umkehren'', which means ``Invert, always invert''.} the exponential generating function $T(z)$, and determine the arising degree-weight generating function afterwards.

\subsection{The general procedure}
\begin{enumerate}
	\item Select an exponential generating function $T=T(z)=\sum_{n\ge 1}T_n\frac{z^{2n}}{(2n)!}$ with \emph{a priori given} numbers $T_n$.
	We assume that $T(z)=f(z^2)$, with $f$ invertible and twice differentiable.
	
  \medskip
	
	\item Express $z=z(T)$ using the inverse function of $f$: $z=\sqrt{f^{-1}(T)}$. 
	
	\medskip
	
		\item Rewrite the differential equation $T''(z)=\varphi(T(z))$ in terms of $f$ and $z=z(T)$:
		using $T''(z)=4z^2f''(z^2)+2f'(z^2)$ we get
		\begin{equation*}
	    \varphi\big(T\big)=T''(z(T))= 4f^{-1}(T)f''\left(f^{-1}(T)\right)+2f'\left(f^{-1}(T)\right).
		\end{equation*}
	
	\medskip
		
	\item Test if $\varphi(T) = \sum_{j \ge 0} \varphi_{j} T^{j}$ has non-negative coefficients, with 
	$\varphi_0>0$, $\varphi_\ell\ge 0$, for all $\ell \in \N$. 
	If yes, we have found a combinatorial family $\mathcal{T}$ of bilabelled increasing trees satisfying the formal equation
	\[
	\mathcal{T} = \mathcal{Z}^{\Box} \ast \left(\mathcal{Z}^{\Box} \ast \varphi\big(\mathcal{T}\big)\right),
	\]
	with degree-weight generating function given via
	\begin{equation*}
	  \varphi\big(T\big)=4f^{-1}(T)f''\left(f^{-1}(T)\right)+2f'\left(f^{-1}(T)\right).
	\end{equation*}
	
  \medskip
		
	\item Interpret the tree family $\mathcal{T}$ with degree-weight generating function $\varphi(t)$ in terms of weighted ordered trees $\mathcal{O}$, which, according to Theorem~\ref{HookBijIITheHook}, yields the hook-length formula
	\begin{equation*}
    \sum_{T \in \mathcal{O}(n)} \prod_{v \in T} \left(\frac{\varphi_{\odeg(v)}}{2h_{v}(2h_{v}-1)}\right) 
	  = \frac{T_{n}}{(2n)!}.
  \end{equation*}
\end{enumerate}

\begin{example}
Let 
\[
T(z)=C\cdot(1-(1-Az^2)^B),
\]
with $A,B,C\in\R$ constant. Extracting coefficients leads to $$T_n=(2n)![z^{2n}]T(z)=(2n)!\cdot (-C)\cdot (-A)^n\binom{B}{n}.$$ 
For the two cases (i) $B<0$, such that $C<0$ and $A>0$, and (ii) $0<B<1$ with $C>0$ and $A>0$, the total weights $T_n$ are positive, $T_n> 0$, for $n\ge 1$.

\smallskip

Then, $T(z)=f(z^2)$ with $f(z)=C\cdot(1-(1-Az)^B)$. Consequently, 
\[
z=z(T)=\sqrt{f^{-1}(T)}=\sqrt{\frac{1-\Big(1-\frac{T}{C}\Big)^{\frac1B}}{A}}.
\]
The second derivative of $T(z)$ is given by
\[
T''(z)=4A^2BC(1-B)(1-Az^2)^{B-2}+ 2ABC(1-Az^2)^{B-1}.
\]
Consequently, due to the demand $\varphi(T)=T''(z(T))$ we obtain the result
\[
\varphi(T)=4ABC(1-B)\big(1-\frac{T}{C}\big)^{1-\frac2B}+2ABC(2B-1)\big(1-\frac{T}{C}\big)^{1-\frac1B}.
\]

In case (i) $A>0$, $B<0$, $C<0$, the function $\varphi(T)$ satisfies
\[
\varphi(T)=\sum_{j \ge 0}\varphi_j T^j =
2ABC\sum_{j \ge 0}\frac{(2-2B)\binom{1-\frac2B}{j}-(1-2B)\binom{1-\frac1B}{j}}{(-C)^j}T^j.
\]
From the condition $\varphi_j \ge 0$, with 
\begin{equation}
\label{Phi1}
\varphi_j=2ABC\frac{2(1-B)\binom{1-\frac2B}{j}-(1-2B)\binom{1-\frac1B}{j}}{(-C)^j},
\end{equation}
for all $j\in\N_0$, we obtain $-\frac{1}{B}\in\N$. Thus, the trees can be generated as weighted $b$-ary trees with $b=1-\frac{2}{B}\in\N$, 
such that only out-degrees from zero to $b$ are allowed. A concrete example would be the choice $A=1$, $B=-1$ and $C=-1$, 
such that $T(z)=\frac{1}{1-z^2}-1$, and $\varphi_j=8\binom{3}{j}-6\binom{2}{j}$, $j\in\N_0$. The positive degree weights are $\varphi_0=2$, $\varphi_1=12$, 
$\varphi_2=18$, $\varphi_3=8$ and $\varphi_j=0$, for $j\ge 4$.

\smallskip

In the case (ii) $0<B<1$, $A>0$ and $C>0$ we note first that the special case $B=\frac12$ leading to 
$$
T(z)=C\cdot(1-(1-Az^2)^{\frac12}),\qquad \varphi(T)=\frac{AC}{(1-\frac{T}{C}\big)^{3}}.
$$
The choice $A=C=1$ leads to the family of three-bundled bilabelled increasing trees discussed earlier. 
More generally, we have 
\[
\varphi(T)=2ABC\sum_{j\ge 0}\Big[(2-2B)\binom{2B-1+j}{j}-(1-2B)\binom{B-1+j}{j}\Big] C^j T^j.
\]
Consequently, the coefficients 
\begin{equation}
\label{Phi2}
\varphi_j=2ABC\sum_{j \ge 0}\Big[(2-2B)\binom{2B-1+j}{j}-(1-2B)\binom{B-1+j}{j}\Big] C^j,
\end{equation}
$j \in \N_{0}$, are non-negative, for all $0<B<1$, $C>0$ and $A>0$.

\smallskip 

Concerning hook-length formul\ae{} we interpret in both cases the weight sequences in terms of weighted ordered trees. 
For both families with degree-weights $\varphi_j$ given by~\eqref{Phi1} or~\eqref{Phi2} we get 
\[
\sum_{T\in\mathcal{O}(n)} \prod_{v\in T} \left(\frac{\varphi_{\odeg(v)}}{2h_{v}(2h_{v}-1)}\right) 
= (-C)\cdot (-A)^n\binom{B}{n}.
\]
\end{example}

\section{k-labelled increasing trees and hook-length formulas}
\subsection{Combinatorial description}
Similar to bilabelled increasing tree families we can consider in general $k$-labelled increasing tree families $\widehat{\mathcal{T}} = \widehat{\mathcal{T}}_{k}$, with $k\ge 1$. 
We call a tree $T$ a $k$-\emph{labelled} tree, if each node $v \in T$ has got a set $\ell_{k}(v)= \{\ell^{[1]}(v),\dots,\ell^{[k]}(v)\}$ of $k$-different integers.
We may always assume that $\ell^{[1]}(v) < \dots<\ell^{[k]}(v)$, and where furthermore the label sets of different nodes are disjoint, i.e.,
$\ell_{k}(v) \cap \ell_{k}(w) = \emptyset$, for $v \neq w$. We say then that $T$ is a $k$-labelled tree with \emph{label set} 
$\mathcal{M} = \mathcal{M}(T) = \bigcup_{v \in T} \ell_{k}(v)$; of course, $|\mathcal{M}| = kn$, for a tree $T$ of size $|T|=n$. 

A $k$-labelled tree $T$ is called \emph{increasing}, if it holds that each label of a child node is always larger than all labels of its parent node:
$\ell_{k}(v) \prec \ell_{k}(w)$, whenever $w$ is a child of $v$, where we use the relation $\{a^{[1]},\dots,a^{[k]}\} \prec \{b^{[1]},\dots,b^{[k]}\} \Longleftrightarrow \max_{i} a^{[i]} < \min_{j} b^{[j]}$.
In Figure~\ref{fig:klabelled} we give an example of a $3$-labelled increasing tree.
\begin{figure}
\begin{center}
  \includegraphics[height=3.5cm]{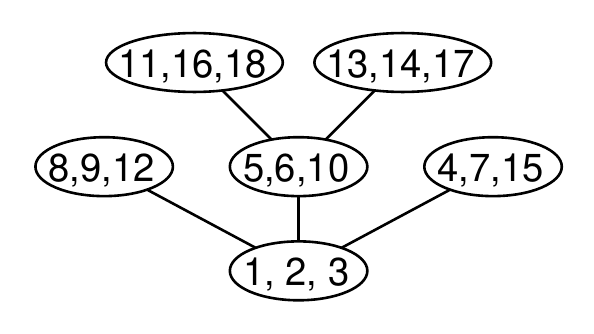}
\end{center}
\caption{A $3$-labelled increasing tree of size $6$, i.e., with $18$ labels.\label{fig:klabelled}}
\end{figure}

We denote by $\widehat{\mathcal{T}}=\widehat{\mathcal{T}}_{k}$ the family of \emph{increasingly $k$-labelled weighted ordered trees},
which contains all (non-empty) increasingly $k$-labelled ordered trees $T \in \mathcal{O}$ of size $|T| \ge 1$ with label set $\mathcal{M} = \{1, 2, \dots, k|T|\}$
and given degree-weight generating function $\varphi(t)$. Analogous to \eqref{HookBijII-eqn1}, such a family $\widehat{\mathcal{T}}$ can be described by the combinatorial construction 
\[
	\widehat{\mathcal{T}} = \big(\mathcal{Z}^{\Box}\big)^k \ast \varphi\big(\widehat{\mathcal{T}}\big).
\]
Equivalently, the exponential generating function $T(z)=\sum_{n\ge 1}T_n\frac{z^{kn}}{(kn)!}$ satisfies the differential equation (with $D_{z}$ denoting the differential operator w.r.t.\ $z$):
\begin{equation}\label{eqn:klabelDEQ}
D_z^k \left(T(z)\right) = T^{(k)}(z) = \varphi\big(T(z)\big),\qquad T^{(\ell)}(z)=0,\quad 0\le \ell \le k-1.
\end{equation}
One can readily adapt the reverse engineering approach presented in Section~\ref{sec:ReverseEngineering} to $k$-labelled increasing tree families. 

\subsection{Hook-length formulas}

Moreover, in order to derive hook-length formul\ae{} we can generalize Lemma~\ref{HookBijII-lem1}, which has been shown in \cite{KubPan2012}, for the number of increasing bilabellings of a given tree $T$ to increasing $k$-labellings.
\begin{lemma}
\label{HookBijOutlookLem1}
  The number $|\mathcal{L}^{[k]}(T)|$ of different increasing $k$-labellings of a tree $T$ of size $n$ with distinguishable nodes is given as follows:
  \begin{equation*}
    |\mathcal{L}^{[k]}(T)| = \frac{(kn)!}{\prod_{v \in T}\left(\fallfak{(k\cdot h_{v})}{k}\right)}.
  \end{equation*}
\end{lemma}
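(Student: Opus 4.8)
The plan is to prove the formula by induction on the size $n = |T|$, generalizing the inductive argument mentioned for Lemma~\ref{HookBijII-lem1}. The key combinatorial observation is that an increasing $k$-labelling of $T$ forces the smallest $k$ labels of the label set $[kn]$ to be placed at the root: indeed, since every label of a child exceeds all labels of its parent, the root must carry the globally minimal labels, and because the root receives exactly $k$ labels, these are precisely $\{1, 2, \dots, k\}$. This removes all freedom at the root and reduces the counting to the subtrees.

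The main step is then to set up the recursion. Suppose the root has subtrees $T_1, \dots, T_d$ (as an ordered tree) with sizes $n_1, \dots, n_d$, so that $n = 1 + \sum_i n_i$. After fixing $\{1,\dots,k\}$ at the root, the remaining $kn - k = k(n-1)$ labels must be distributed among the subtrees so that each subtree $T_i$ receives exactly $k n_i$ labels and is itself increasingly $k$-labelled. The number of ways to partition the remaining labels into ordered blocks of the prescribed sizes is the multinomial coefficient $\binom{k(n-1)}{kn_1, \dots, kn_d}$, and within each block the number of valid labellings is $|\mathcal{L}^{[k]}(T_i)|$ by the induction hypothesis. This yields
\[
|\mathcal{L}^{[k]}(T)| = \binom{k(n-1)}{kn_1, \dots, kn_d} \prod_{i=1}^{d} |\mathcal{L}^{[k]}(T_i)|.
\]
Substituting the inductive formula $|\mathcal{L}^{[k]}(T_i)| = (kn_i)! \big/ \prod_{v \in T_i}\fallfak{(k h_v)}{k}$ and expanding the multinomial coefficient, the factorials $(kn_i)!$ cancel against the denominators of the multinomial, leaving $(k(n-1))!$ in the numerator. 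The product over the nodes of the subtrees assembles into $\prod_{v \in T, v \neq \ro} \fallfak{(k h_v)}{k}$, since the hook-lengths of nodes inside the subtrees are unchanged by attaching them to the root.

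It remains to account for the root node itself. The root has hook-length $h_{\ro} = n$, so the missing factor in the denominator is $\fallfak{(kn)}{k} = kn(kn-1)\cdots(kn - k + 1)$. The final identity to verify is therefore purely arithmetic:
\[
(k(n-1))! \cdot \fallfak{(kn)}{k} = (kn)!,
\]
which holds because $\fallfak{(kn)}{k} = (kn)! / (kn - k)! = (kn)!/(k(n-1))!$. Combining the numerator $(k(n-1))!$ with this factor recovers $(kn)!$ over the full product $\prod_{v \in T}\fallfak{(k h_v)}{k}$, completing the induction. The base case $n = 1$ is immediate, since there is a unique increasing $k$-labelling of a single node with label set $[k]$, and the formula gives $k!/\fallfak{k}{k} = k!/k! = 1$.

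I expect the main obstacle to be the bookkeeping in the recursive step: one must argue carefully that the distribution of labels to subtrees factors as claimed, in particular that the relative order constraint $\ell_k(v) \prec \ell_k(w)$ across subtrees imposes no further restriction once the blocks of labels are assigned (the constraint is only along root-to-leaf paths, and distinct subtrees are order-independent). For \emph{unordered} or weighted families one would additionally need to confirm that the degree-weights $\varphi_{\odeg(v)}$ are not affected, but since the lemma concerns a fixed tree $T$ with distinguishable nodes, the purely label-counting argument above suffices and the weights play no role here.
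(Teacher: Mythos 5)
Your proof is correct and follows essentially the same route as the paper's own argument: induction on $|T|$, observing that the root must receive the labels $\{1,\dots,k\}$, decomposing over the root's subtrees with the multinomial coefficient $\binom{kn-k}{kn_1,\dots,kn_d}$, and cancelling the subtree factorials against $\fallfak{(kn)}{k} = (kn)!/(k(n-1))!$ for the root. No gaps; the additional remarks on cross-subtree independence and irrelevance of degree-weights are accurate but not needed beyond what the paper itself states.
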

\begin{proof}
  The formula can be shown easily by using induction on the size $|T|=n$ of $T$.
  For $n=1$ there is exactly one increasing $k$-labelling of $T$ and $h(v)=1$ such that $\frac{k!}{\fallfak{k}{k}}=1$.
  Let $n > 1$: we assume that the root of $T$ has \emph{out-degree} $r$. Let us denote the subtrees of the root,
    which have corresponding sizes $s_{1}, \dots, s_{r}$, by $T_{1}, \dots, T_{r}$.
    It holds that after an order preserving relabelling each of the subtrees $T_{1}, \dots, T_{r}$ is itself an increasingly $k$-labelled tree.
    Taking into account that the root node of $T$ is labelled by $\{1, \dots, k\}$ and that the remaining nodes are distributed over the nodes of the  subtrees one obtains:
    \begin{equation*}
      |\mathcal{L}^{[k]}(T)| = \binom{kn-k}{k s_{1}, k s_{2}, \dots, k s_{r}} \cdot |\mathcal{L}^{[k]}(T_{1})| \cdot |\mathcal{L}^{[k]}(T_{2})| \cdots |\mathcal{L}^{[k]}(T_{r})|.
    \end{equation*}
    Using the induction hypothesis we further get:
    \begin{equation*}
      |\mathcal{L}^{[k]}(T)| = \frac{(kn-k)!}{\prod_{j=1}^{r}(k s_{j})!} \prod_{j=1}^{r} \frac{(k s_{j})!}{\prod_{v \in T_{j}}\left(\fallfak{(k\cdot h_{v})}{k}\right)}
      = \frac{(kn)!}{\prod_{v \in T}\left(\fallfak{(k\cdot h_{v})}{k}\right)},
    \end{equation*}
    which completes the proof.
\end{proof}

As a consequence we obtain a generalization of Theorem~\ref{HookBijIITheHook}.
\begin{theorem}
\label{kHookFormula}
Given a family $\widehat{\mathcal{T}}$ of increasingly $k$-labelled weighted ordered trees with degree-weight generating function $\varphi(t) = \sum_{j \ge 0} \varphi_{j} t^{j}$.
Then, the family $\mathcal{O}$ of ordered trees satisfies the following hook-length formula:
\[
\sum_{T\in\mathcal{O}(n)}\prod_{v\in T} \left(\frac{\varphi_{\odeg(v)}}{\fallfak{(k h_{v})}k}\right) = \frac{T_n}{(kn)!}.
\]
\end{theorem}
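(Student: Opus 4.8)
The plan is to mirror exactly the derivation that produced Theorem~\ref{HookBijIITheHook} in the bilabelled case $k=2$, now feeding in the $k$-labelled enumeration formula of Lemma~\ref{HookBijOutlookLem1}. The single genuinely combinatorial ingredient—counting the increasing $k$-labellings of a fixed tree—has already been supplied by that lemma, so the remaining argument is a purely formal manipulation of products ranging over the vertices of a tree, and I expect no real obstacle beyond justifying the initial decomposition.

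First I would recall that, by the very definition of the total weight of an increasingly $k$-labelled weighted ordered tree family, the number (total weight) $T_n$ of size-$n$ trees in $\widehat{\mathcal{T}}$ splits as a sum over the underlying ordered trees,
\[
T_n = \sum_{T \in \mathcal{O}(n)} w(T)\cdot |\mathcal{L}^{[k]}(T)|,
\]
where $w(T) = \prod_{v \in T}\varphi_{\odeg(v)}$ is the degree-weight of $T$ and $|\mathcal{L}^{[k]}(T)|$ counts the increasing $k$-labellings of $T$ with label set $\{1,\dots,kn\}$. This is the direct analogue of the identity for $T_n$ stated just before Theorem~\ref{HookBijIITheHook}, and it is underwritten by the symbolic description $\widehat{\mathcal{T}} = (\mathcal{Z}^{\Box})^k \ast \varphi(\widehat{\mathcal{T}})$: each object of $\widehat{\mathcal{T}}(n)$ is a triple consisting of an ordered tree $T$, its weight $w(T)$, and an increasing $k$-labelling $L(T) \in \mathcal{L}^{[k]}(T)$, so summing weights over all such triples yields precisely the above.

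Next I would substitute the closed form of Lemma~\ref{HookBijOutlookLem1},
\[
|\mathcal{L}^{[k]}(T)| = \frac{(kn)!}{\prod_{v\in T}\fallfak{(k h_{v})}{k}},
\]
into this sum, pull the common factor $(kn)!$ to the front, and merge the two products over $v \in T$—the one coming from $w(T)$ and the one in the denominator of $|\mathcal{L}^{[k]}(T)|$. This gives
\[
\frac{T_n}{(kn)!} = \sum_{T \in \mathcal{O}(n)} \prod_{v \in T}\frac{\varphi_{\odeg(v)}}{\fallfak{(k h_{v})}{k}},
\]
which is exactly the asserted hook-length formula.

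The only point deserving a little care is the legitimacy of the first decomposition of $T_n$ as a weighted sum over $\mathcal{O}(n)$ indexed by the underlying ordered tree, which I would anchor in the combinatorial construction above; everything else is routine bookkeeping with products. As a sanity check one recovers Theorem~\ref{HookBijIITheHook} upon setting $k=2$, since then $\fallfak{(2h_{v})}{2} = 2h_{v}(2h_{v}-1)$.
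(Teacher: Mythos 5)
Your proposal is correct and follows exactly the paper's own route: the paper obtains Theorem~\ref{kHookFormula} as an immediate consequence of Lemma~\ref{HookBijOutlookLem1}, using the same decomposition $T_n = \sum_{T \in \mathcal{O}(n)} w(T)\cdot|\mathcal{L}^{[k]}(T)|$ and the same rearrangement of products that was spelled out for the bilabelled case preceding Theorem~\ref{HookBijIITheHook}. Nothing is missing, and your $k=2$ sanity check matches the paper's earlier result.
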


\subsection{Trilabelled unordered increasing trees}
We consider the family of unordered trilabelled increasing trees with 
degree-weight generating function $\varphi(t)=e^t$. The Blasius function $y(z)$ is the solution of the so-called Blasius differential equation 
\[
y'''(z)+y''(z)y(z)=0,\qquad y(0)=0,\quad y'(0)=0,\quad \lim_{z\to\infty}y'(z)=1,
\]
arising in the study of the Prandtl-Blasius Flow~\cite{Blasius1908,BlasiusBoyd1999,BlasiusFinch2008,BlasiusHager2003}.
It is known that the solution of this differential equation 
satisfies the power series expansion 
\[
y(z)=\sum_{n=0}^\infty (-1)^n \frac{p_n \xi^{n+1}}{(3n+2)!}z^{3n+2}, \quad \xi=y''(0)\doteq 0.4695999883,
\]
with $p_0=1$ and $p_n$ a certain positive integer sequence, see~\cite{Blasius1908,BlasiusFinch2008}. We obtain the following result.

\begin{prop}
The derivative $F(z)=T'(z)$ of the exponential generating function $T(z)$ of unordered trilabelled increasing trees
can be given in terms of the Blasius function $y(z)$ as follows:
\[
F(z)=\frac{1}{\xi^{\frac13}}y\Big(\frac{-z}{\xi^{\frac13}}\Big),
\] 
Moreover, the numbers $T_n$, counting unordered trilabelled increasing trees with $3n$ labels, are the shifted Blasius numbers:
$T_{n+1}=p_n$. They satisfy the recurrence relation
\[
T_{n+1}=%\sum_{k=0}^{n-1}\binom{3n-1}{3k}T_{k+1}T_{n-k},\quad 
\sum_{k=1}^{n}\binom{3n-1}{3k-3}T_{k}T_{n-k},\quad n>1,\quad T_1=1.
\]
\end{prop}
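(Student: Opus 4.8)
The plan is to reduce the defining differential equation to the Blasius equation and then transfer the known power series expansion. By \eqref{eqn:klabelDEQ} with $k=3$ and $\varphi(t)=e^t$, the exponential generating function $T(z)$ satisfies
\[
T'''(z)=e^{T(z)},\qquad T(0)=T'(0)=T''(0)=0,
\]
so that in particular $T'''(0)=e^{T(0)}=1$. Writing $F(z):=T'(z)$ and differentiating the equation once to eliminate the exponential gives
\[
T''''(z)=e^{T(z)}T'(z)=T'''(z)\,T'(z),\qquad\text{that is,}\qquad F'''(z)=F''(z)\,F(z).
\]
This is a Blasius-type equation, but with a sign differing from $y'''+y''y=0$. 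First I would make the scaling ansatz $F(z)=a\,y(bz)$ and substitute it into $F'''=F''F$: the chain rule turns this into $b\,y'''(bz)=a\,y''(bz)\,y(bz)$, which is consistent with $y'''=-y''y$ precisely when $a=-b$.

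Next I would match the three initial values $F(0)=0$, $F'(0)=0$, $F''(0)=T'''(0)=1$ against $y(0)=0$, $y'(0)=0$, $y''(0)=\xi$. The only nontrivial condition is $F''(0)=-b^{3}\,\xi=1$, forcing $b^{3}=-1/\xi$, hence $b=-\xi^{-1/3}$ and $a=\xi^{-1/3}$. Since $y$ is the unique solution of the third-order initial value problem with $y''(0)=\xi$ (the value pinned down by $\lim_{z\to\infty}y'(z)=1$), uniqueness of solutions of the initial value problem legitimizes reading off $F$ from the ansatz, yielding the stated identity $F(z)=\xi^{-1/3}\,y(-\xi^{-1/3}z)$.

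To obtain the numbers I would transfer the series. Substituting the scaling into $y(z)=\sum_{n\ge0}(-1)^n\frac{p_n\xi^{n+1}}{(3n+2)!}z^{3n+2}$, the factor $(-1)^n$ combines with $(-1)^{3n+2}=(-1)^n$ coming from the argument $-\xi^{-1/3}z$ to give $(-1)^{2n}=1$, while the powers of $\xi$ collapse because $\xi^{n+1}\cdot\xi^{-(3n+2)/3}\cdot\xi^{-1/3}=1$. The surviving series is $F(z)=\sum_{n\ge0}\frac{p_n}{(3n+2)!}z^{3n+2}$. Comparing with $F(z)=T'(z)=\sum_{n\ge1}T_n\frac{z^{3n-1}}{(3n-1)!}$, where the exponents $3n+2$ ($n\ge0$) and $3k-1$ ($k\ge1$) coincide under $k=n+1$, immediately gives $T_{n+1}=p_n$. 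For the recurrence I would extract coefficients directly from the bilinear relation $T''''(z)=T'''(z)\,T'(z)$: writing $T'''(z)=\sum_{j\ge0}T_{j+1}\frac{z^{3j}}{(3j)!}$ and $T'(z)=\sum_{k\ge1}T_k\frac{z^{3k-1}}{(3k-1)!}$, the coefficient of $z^{3n-1}$ on the left is $T_{n+1}/(3n-1)!$, and collecting the terms with $j+k=n$ on the right produces the Cauchy-type convolution with the multinomial factor $\binom{3n-1}{3k-3}$, with the base case $T_1=1$ inherited from $T'''(0)=1$.

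The main obstacle will be the bookkeeping in the series-transfer step: one must track the interaction of the two independent sources of signs (the alternating coefficients of $y$ and the odd power of the argument $-\xi^{-1/3}z$) together with the three distinct powers of $\xi$, and verify that they cancel exactly so that the normalizing constant $\xi$ disappears entirely from $T_n$. By contrast, the reduction to the Blasius equation and the matching of the scaling constants are routine once the correct sign convention $a=-b$ is fixed, and the convolution recurrence follows by standard coefficient extraction.
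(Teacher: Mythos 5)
Your proposal is correct and follows essentially the same route as the paper: reduce $T'''=e^{T}$ to the autonomous equation $F'''=F''F$ for $F=T'$, identify $F$ with a rescaled, sign-flipped Blasius function by matching the cubic scaling constants and the initial data, and obtain the recurrence by coefficient extraction. One remark: your convolution (terms $T_{j+1}T_{k}$ with $j+k=n$) actually yields $T_{n+1}=\sum_{k=1}^{n}\binom{3n-1}{3k-3}T_{k}T_{n+1-k}$, which is the correct form and exposes an off-by-one misprint in the stated recurrence, whose factor $T_{n-k}$ fails numerically already at $n=2$ (it would give $T_{3}=1$ instead of $11$).
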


\begin{remark}
The sequence $(T_n)$ starts with
\begin{equation*}
  (T_n)=(1, 1, 11, 375, 27897, 3817137,\dots),
\end{equation*}
compare with Figure~\ref{HookBijIIFigureUnordered}; in the OEIS they appear as \href{https://oeis.org/A018893}{A018893} without a combinatorial interpretation.
\end{remark}
\begin{proof}
According to Equation~\ref{eqn:klabelDEQ} the exponential generating function $T(z)$ satisfies the third order non-linear autonomous differential equation
\[
T'''(z)=e^{T(z)},\quad T(0)=0,\quad T'(0)=0,\quad T''(0)=0. 
\]
Let $F(z)=T'(z)$. We translate the equation $T''''(z)=e^{T(z)}T'(z)$ into a differential equation for $F(z)$:
\[
F'''(z) = F''(z)F(z),
\]
or equivalently
\[
F'''(z) - F''(z)F(z)=0, \quad F(0)=0,\quad F'(0)=0,\quad F''(0)=1. 
\]
We directly obtain the stated recurrence relation by extraction of coefficients from $F(z)=\sum_{n\ge 1}T_n\frac{z^{3n-1}}{(3n-1)!}$.
Moreover, we observe that the differential equation for $F(-z)$ is identical to the Blasius differential equation except the last initial condition. The relation of $F(z)=T'(z)$ to the Blasius function is now readily obtained by matching the power series expansion of $F(z)$
to the known expansion of $y(x)$. 
\end{proof}

%\begin{figure}
%\begin{center}
%\includegraphics[scale=0.5]{}
%\end{center}
%\caption{All Blasius trees - unordered trilabelled increasing trees - with three, six and nine labels.\label{HookBijIIFigureBlasius}}
%\end{figure}

Since $\varphi_{j}=[t^{j}]e^t = \frac{1}{j!}$, Theorem~\ref{kHookFormula} gives the following hook-length formula.
\begin{coroll}
The family $\mathcal{O}$ of ordered trees satisfies the following hook-length formula:
\[
  \sum_{T\in \mathcal{O}(n)} \prod_{v \in T} \left(\frac{1}{\deg(v)! \cdot 3h_{v}(3h_{v}-1)(3h_{v}-2)}\right) = \frac{p_{n-1}}{(3n)!},
\]
where $(p_n)_{n\ge 0}$ denote the coefficients of the Blasius function $y(z)$. 
\end{coroll}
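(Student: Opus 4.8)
The plan is to apply Theorem~\ref{kHookFormula} (the general $k$-labelled hook-length formula) directly with $k=3$ and the specific degree-weight sequence coming from $\varphi(t)=e^t$, and then to feed in the enumeration result for $T_n$ established in the preceding Proposition. Since the proof is a specialization of an already-proven master theorem, the work is essentially bookkeeping: identify the right-hand side, simplify the falling-factorial term, and substitute the value of $T_n$.

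First I would record the degree-weights. Because $\varphi(t)=e^t=\sum_{j\ge 0}\frac{t^j}{j!}$, we have $\varphi_j=[t^j]e^t=\frac{1}{j!}$ for all $j\ge 0$, so $\varphi_{\odeg(v)}=\frac{1}{\deg(v)!}$ at each node $v$. Next I would unfold the falling factorial appearing in Theorem~\ref{kHookFormula} for $k=3$, namely
\[
\fallfak{(3h_v)}{3}=(3h_v)(3h_v-1)(3h_v-2),
\]
which turns the product $\prod_{v\in T}\bigl(\varphi_{\odeg(v)}/\fallfak{(3h_v)}{3}\bigr)$ into exactly the product displayed in the corollary. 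This already matches the left-hand side verbatim.

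For the right-hand side I would invoke Theorem~\ref{kHookFormula} with $k=3$, which gives $\frac{T_n}{(3n)!}$, and then substitute the enumeration result from the Proposition just proved, namely that the numbers $T_n$ counting unordered trilabelled increasing trees with $3n$ labels are the shifted Blasius numbers, $T_n=p_{n-1}$ (equivalently $T_{n+1}=p_n$). This yields $\frac{p_{n-1}}{(3n)!}$, which is precisely the claimed right-hand side.

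There is really no substantial obstacle here, since both ingredients are already in hand: the hook-length master formula of Theorem~\ref{kHookFormula} and the identification $T_n=p_{n-1}$ from the Proposition. The only point requiring care is the indexing of the Blasius coefficients $(p_n)_{n\ge 0}$ versus the tree counts $(T_n)_{n\ge 1}$, to make sure the shift $T_n=p_{n-1}$ is applied correctly so that the exponent of the hook-length term and the Blasius index line up. Thus the proof amounts to a one-line specialization once these two facts are combined.
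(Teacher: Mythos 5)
Your proposal is correct and coincides with the paper's own (implicit) derivation: the paper states this corollary as an immediate specialization of Theorem~\ref{kHookFormula} with $k=3$, $\varphi_j=\frac{1}{j!}$, and the identification $T_n=p_{n-1}$ from the preceding proposition, exactly as you do. Your handling of the falling factorial $\fallfak{(3h_v)}{3}=3h_v(3h_v-1)(3h_v-2)$ and of the index shift is precisely the bookkeeping the paper leaves to the reader.
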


\section{Free multilabelled increasing trees\label{sec:FreeMul}}

\subsection{Definition and enumeration results}
So far we considered increasing labellings, where each node in the tree has got the same number of labels. In order to get rid of this restriction, we change the point of few and think of $m$ labels, which we want to distribute over the nodes of trees of size $n \le m$ in an ``increasing way''. Given a label set $\mathcal{M}$ of $m = |\mathcal{M}|$ labels, we call a tree $T$ a \emph{free multilabelled} tree with label set $\mathcal{M}$, if each node $v \in T$ has got a non-empty set $\ell(v) \subset \mathcal{M}$ of labels satisfying $\ell(v) \cap \ell(w) = \emptyset$, for $v \neq w$, and $\bigcup_{v \in T} \ell(v) = \mathcal{M}$. In other words, the labellings of the nodes $v \in T$ are forming a partition of the label set $\mathcal{M}$. A free multilabelled tree $T$ is called increasing, if it holds that each label of a child node is always larger than all labels of its parent node: 
$\ell(v) \prec \ell(w)$, whenever $w$ is a child of $v$, where we use the relation $\{a^{[i]}: i \in I\} \prec \{b^{[j]}: j \in J\} \Longleftrightarrow \max_{i} a^{[i]} < \min_{j} b^{[j]}$.

We denote by $\widehat{\mathcal{T}}$ the family of increasingly free multilabelled weighted ordered trees (= free multilabelled increasing trees), which contains all increasingly free multilabelled trees $T \in \mathcal{O}$ of size $|T| \ge 1$ with label sets $\mathcal{M} = \mathcal{M}(T) = \{1, 2, \dots, |\mathcal{M}|\}$ and $|\mathcal{M}| \ge |T|$, and given degree-weight generating function $\varphi(t)$.
In Figure~\ref{fig:multilabelled} we give all ordered multilabelled increasing trees with $3$ labels.
\begin{figure}
\begin{center}
  \includegraphics[height=3.5cm]{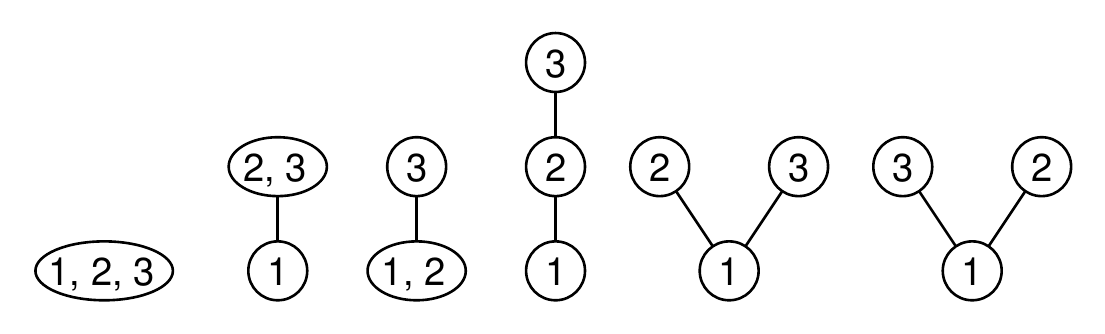}
\end{center}
\caption{All ordered multilabelled increasing trees with $3$ labels.\label{fig:multilabelled}}
\end{figure}

Let us denote by $T_{m}$ the number of free multilabelled increasing trees of $\widehat{\mathcal{T}}$ with $m$ labels, i.e., label set $\mathcal{M} = [m]$, and by $T(z) := \sum_{m \ge 1} T_{m} \frac{z^{m}}{m!}$ the exponential generating function. We get the following simple characterization of the generating function $T(z)$.
\begin{prop}\label{prop:FreeMulti}
The exponential generating function $T(z)$ of the number of free multilabelled increasing trees $T_{m}$ with $m$ labels and degree-weight generating function $\varphi(t)$ satisfies the following first order differential equation:
\begin{equation}\label{eqn:FreeMultiDEQ}
  T'(z) = \varphi(T(z)) + T(z), \quad T(0)=0.
\end{equation}
\end{prop}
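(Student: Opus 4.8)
The plan is to exploit the defining ``increasing'' constraint to pin down the label set of the root, and then to set up a recurrence for $T_m$ that translates directly into the claimed first-order differential equation. The crucial preliminary observation is that in any free multilabelled increasing tree the root node must carry an \emph{initial segment} of the labels: since every non-root node $w$ is a descendant of the root $\rho$, iterating the relation $\ell(v)\prec\ell(w)$ along the path from $\rho$ to $w$ gives $\max\ell(\rho)<\min\ell(w)$, so every label not sitting at the root exceeds every label at the root. Hence $\ell(\rho)=\{1,2,\dots,k\}$ with $k=|\ell(\rho)|\ge 1$, and the subtree forest is labelled by $\{k+1,\dots,m\}$.

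First I would record that the forest hanging off the root has exponential generating function $\varphi(T(z))$: the root of an ordered tree carries an ordered sequence of subtrees, a length-$r$ sequence receiving the degree-weight $\varphi_r$, and --- crucially --- the labels $\{k+1,\dots,m\}$ are distributed among distinct subtrees with no mutual order constraint (only parent--child pairs are constrained), so this distribution is a genuine labelled (shuffle) product. Thus the weighted forest EGF is $\sum_{r\ge 0}\varphi_r T(z)^r=\varphi(T(z))$; write $F(z):=\varphi(T(z))=\sum_{j\ge 0}F_j\frac{z^j}{j!}$.

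Next I would decompose a tree of size $m\ge 1$ according to the number $k$ of labels at the root. If $k\ge 2$, deleting the smallest label $1$ from the root and relabelling order-preservingly yields a free multilabelled increasing tree on $[m-1]$ (the root still carries the non-empty initial segment $\{2,\dots,k\}$), and this is a bijection; these trees are therefore counted by $T_{m-1}$. If $k=1$, the root carries only $\{1\}$ and the forest occupies $\{2,\dots,m\}$, contributing $F_{m-1}$. Since the root is forced to take the smallest labels, the split is uniquely determined and there is no binomial shuffle factor between root and forest. This gives the recurrence
\[
T_m=T_{m-1}+F_{m-1},\qquad m\ge 1,\quad T_0:=0.
\]
Comparing coefficients, $[z^{m-1}/(m-1)!]\,T'(z)=T_m$ while $[z^{m-1}/(m-1)!]\,\bigl(T(z)+\varphi(T(z))\bigr)=T_{m-1}+F_{m-1}$, so the recurrence is exactly the statement $T'(z)=T(z)+\varphi(T(z))$; the initial condition $T(0)=0$ holds because the smallest tree has $m=1$.

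The main obstacle I anticipate is the careful justification of the two structural facts that drive everything: that the root must carry a prefix $\{1,\dots,k\}$ (so that the root/forest split is an unshuffled ``prefix'' split contributing no binomial factor), while the distribution of labels \emph{among the sibling subtrees} is an ordinary labelled product contributing the usual shuffle. Keeping these two bookkeeping conventions apart is precisely what produces the additive term $T(z)$ (from the $k\ge 2$ case) rather than the purely multiplicative behaviour one sees for single-label increasing trees.
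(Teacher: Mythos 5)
Your proof is correct and takes essentially the same approach as the paper: decompose at the root, classify by the number $k$ of root labels (which necessarily form the prefix $\{1,\dots,k\}$), get the forest contribution $\varphi(T(z))$ from the $k=1$ case, and identify trees with $k\ge 2$ root labels with trees on $m-1$ labels. The only difference is presentational: where you strip the smallest label off the root as an explicit bijection, the paper performs the equivalent index shift $j\mapsto j-1$ inside its recurrence, and both arguments yield $T_{m}=T_{m-1}+F_{m-1}$ and hence the stated differential equation.
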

\begin{proof}
  The number $T_{m}$ of free multilabelled increasing trees with $m$ labels can be counted recursively by distinguishing a tree into the root node and its $r \ge 0$ subtrees and taking into account the number $j \ge 1$ of labels the root node as well as the numbers $s_{1}, \dots, s_{r}$ of labels the subtrees will get. After an order preserving relabelling each of the subtrees is itself a free multilabelled increasing tree, which yields the following recurrence.
	\begin{equation*}
	  T_{m} = \sum_{j=1}^{m} \sum_{r \ge 0} \varphi_{r} \sum_{s_{1} + \cdots + s_{r} = m-j} \binom{m-j}{s_{1}, s_{2}, \dots, s_{r}} T_{s_{1}} T_{s_{2}} \cdots T_{s_{r}}, \quad m \ge 1, 
	\end{equation*}
	with $T_{0}=0$.	Considering the cases $j=1$ and $j > 1$ separately, we further obtain
	\begin{align*}
	  T_{m} & = \sum_{r \ge 0} \varphi_{r} \sum_{s_{1} + \cdots + s_{r} = m-1} \binom{m-1}{s_{1}, s_{2}, \dots, s_{r}} T_{s_{1}} T_{s_{2}} \cdots T_{s_{r}} \\
		& \quad \mbox{} + \sum_{j=2}^{m} \sum_{r \ge 0} \varphi_{r} \sum_{s_{1} + \cdots + s_{r} = m-j} \binom{m-j}{s_{1}, s_{2}, \dots, s_{r}} T_{s_{1}} T_{s_{2}} \cdots T_{s_{r}}\\
		& = \sum_{r \ge 0} \varphi_{r} \sum_{s_{1} + \cdots + s_{r} = m-1} \binom{m-1}{s_{1}, s_{2}, \dots, s_{r}} T_{s_{1}} T_{s_{2}} \cdots T_{s_{r}} \\
		& \quad \mbox{} + \sum_{j=1}^{m-1} \sum_{r \ge 0} \varphi_{r} \sum_{s_{1} + \cdots + s_{r} = m-1-j} \binom{m-1-j}{s_{1}, s_{2}, \dots, s_{r}} T_{s_{1}} T_{s_{2}} \cdots T_{s_{r}}\\
		& = \sum_{r \ge 0} \varphi_{r} \sum_{s_{1} + \cdots + s_{r} = m-1} \binom{m-1}{s_{1}, s_{2}, \dots, s_{r}} T_{s_{1}} T_{s_{2}} \cdots T_{s_{r}} + T_{m-1}, \quad m \ge 1,
	\end{align*}
	with $T_{0}=0$.	Treating this recurrence with the generating function $T(z) = \sum_{m \ge 1} T_{m} \frac{z^{m}}{m!}$, we immediately get the differential equation stated above.
\end{proof}

Comparing the differential equation \eqref{eqn:FreeMultiDEQ} for the generating function of the number of free multilabelled increasing trees with the well-known differential equation for the generating function of the number of (unilabelled) increasing trees, which is the instance $k=1$ in \eqref{eqn:klabelDEQ}, we immediately get from Proposition~\ref{prop:FreeMulti} the following corollary.
\begin{coroll}\label{cor:FreeMultiUniInc}
  Let $\widehat{\mathcal{T}}$ denote the family of free multilabelled increasing trees with degree-weight generating function $\varphi(t)$ and $(T_{m})$ the sequence of the number of such trees with $m$ labels. Moreover, let $\widetilde{\mathcal{T}}$ denote the family of (unilabelled) increasing trees with degree-weight generating function $\widetilde{\varphi}(t) := \varphi(t)+t$ and $(\widetilde{T}_{m})$ the sequence of the number of such trees with $m$ nodes (= labels). Then it holds for all $m \ge 1$:
	\begin{equation*}
	  T_{m} = \widetilde{T}_{m}.
	\end{equation*}
\end{coroll}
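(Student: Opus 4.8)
The plan is to read off the differential equation that defines the unilabelled increasing tree family $\widetilde{\mathcal{T}}$, to recognize that—after the substitution $\widetilde{\varphi}(t)=\varphi(t)+t$—it coincides literally with the initial value problem for $T(z)$ supplied by Proposition~\ref{prop:FreeMulti}, and then to invoke uniqueness of the solution at the level of exponential generating functions.

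First I would recall that, by the $k=1$ instance of the general differential equation~\eqref{eqn:klabelDEQ}, the exponential generating function $\widetilde{T}(z)=\sum_{m\ge 1}\widetilde{T}_m\frac{z^m}{m!}$ of the unilabelled increasing trees with degree-weight generating function $\widetilde{\varphi}(t)$ is characterized by
\begin{equation*}
  \widetilde{T}'(z) = \widetilde{\varphi}\big(\widetilde{T}(z)\big), \quad \widetilde{T}(0) = 0.
\end{equation*}
Substituting $\widetilde{\varphi}(t)=\varphi(t)+t$ then gives
\begin{equation*}
  \widetilde{T}'(z) = \varphi\big(\widetilde{T}(z)\big) + \widetilde{T}(z), \quad \widetilde{T}(0) = 0,
\end{equation*}
which is exactly the initial value problem~\eqref{eqn:FreeMultiDEQ} satisfied by $T(z)$. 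Hence $T(z)$ and $\widetilde{T}(z)$ solve one and the same first-order autonomous initial value problem.

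It remains to argue that this problem has a unique solution among exponential generating functions, so that $T(z)=\widetilde{T}(z)$ and therefore $T_m=\widetilde{T}_m$ for all $m\ge 1$. This is the only point requiring (minor) care, and it is where I would do the bookkeeping: extracting the coefficient of $z^{m-1}$ from $T'(z)=\varphi(T(z))+T(z)$ yields $[z^{m-1}]T'(z)=\frac{T_m}{(m-1)!}$ on the left, while on the right the summand $\varphi(T(z))=\sum_{j\ge 0}\varphi_j T(z)^j$ contributes to the coefficient of $z^{m-1}$ only through products $T_{i_1}\cdots T_{i_j}$ with $i_1+\cdots+i_j=m-1$ and each $i_\ell\ge 1$ (since $T(z)$ starts at order $z$), so that only $T_1,\dots,T_{m-1}$ appear; the term $+T(z)$ likewise contributes only $T_{m-1}$. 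Thus the recursion determines each $T_m$ uniquely from the datum $T(0)=0$, and the identical recursion governs $\widetilde{T}_m$. The expected main obstacle is therefore purely notational—this coefficient comparison—and no analytic hypothesis on $\varphi$ is needed; equality of the two formal power series immediately yields the claimed equality of coefficients.
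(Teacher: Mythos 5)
Your proposal is correct and follows the same route as the paper: the paper likewise obtains the corollary by comparing the differential equation of Proposition~\ref{prop:FreeMulti} with the $k=1$ instance of \eqref{eqn:klabelDEQ} under the substitution $\widetilde{\varphi}(t)=\varphi(t)+t$, concluding the equality of generating functions ``immediately''. Your added coefficient-extraction argument merely makes explicit the formal-power-series uniqueness that the paper leaves implicit, which is a fine (and correct) bit of extra care rather than a different method.
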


In the following we give a combinatorial interpretation of this link between free multilabelled increasing trees and unilabelled increasing trees. We formulate this relation for the family of ordered trees, but it is straightforward to extended it to weighted ordered trees.
\begin{theorem}
  There is a bijection from the family $\widehat{\mathcal{O}}(m)$ of ordered free multilabelled increasing trees with $m$ labels to the family $\widetilde{\mathcal{O}}(m)$ of ordered increasing trees of size $m$, where each node of out-degree $1$ is either coloured black or white.
\end{theorem}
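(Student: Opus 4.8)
The plan is to construct an explicit, size-preserving bijection that refines the numerical identity $T_m=\widetilde T_m$ of Corollary~\ref{cor:FreeMultiUniInc}. First I would record what the target family is for ordered trees: since $\varphi(t)=\frac{1}{1-t}$ has all coefficients equal to one, the degree-weight generating function of the associated unilabelled family is $\widetilde\varphi(t)=\varphi(t)+t=\frac{1}{1-t}+t$, so $\widetilde\varphi_j=1$ for $j\neq 1$ and $\widetilde\varphi_1=2$. Hence an object of $\widetilde{\mathcal O}(m)$ is exactly an ordered increasing tree on the label set $[m]$ in which every vertex of out-degree one carries one of two colours, which matches the statement. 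The bijection will turn the several labels sitting on a single vertex into a chain of ordinary vertices, and the two colours will serve to remember which out-degree-one edges were created by this expansion and which were genuine edges of the underlying ordered tree.

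Then I would define the forward map $\Psi\colon\widehat{\mathcal O}(m)\to\widetilde{\mathcal O}(m)$. Given a free multilabelled increasing tree $\widehat T$, replace each vertex $v$ carrying the label set $\ell(v)=\{a_1<a_2<\dots<a_k\}$ by the path $a_1\to a_2\to\dots\to a_k$ (parent to child), attach the expansions of the children of $v$, in their original left-to-right order, to the bottom vertex $a_k$, and connect the top vertex $a_1$ to the expansion of the parent of $v$. Since $a_1=\min\ell(v)$ exceeds the maximal label of the parent of $v$ and $a_k=\max\ell(v)$ lies below all labels of the children, the resulting tree is increasing, ordered, and has exactly $m$ vertices. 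Finally colour each internal chain vertex $a_1,\dots,a_{k-1}$ (all of out-degree one) white, and colour the bottom vertex $a_k$ black precisely when $\odeg(v)=1$, so that $a_k$ also has out-degree one; every remaining vertex has out-degree $\neq 1$ and needs no colour. One checks that this colours exactly the out-degree-one vertices of $\Psi(\widehat T)$.

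For the inverse $\Phi$ I would group the vertices of a coloured increasing tree $\widetilde T$ into blocks: a block is a maximal downward path $u_1\to\dots\to u_j$ whose non-terminal vertices $u_1,\dots,u_{j-1}$ are white out-degree-one vertices, the terminal vertex $u_j$ being either a black out-degree-one vertex, a leaf, or a vertex of out-degree at least two. Because the tree is increasing, the labels along a block satisfy $u_1<\dots<u_j$, so each block yields a multilabelled vertex with label set $\{u_1,\dots,u_j\}$; the block containing the root becomes the root, and one block is a child of another exactly when its top is a child in $\widetilde T$ of the other's bottom, with the left-to-right order inherited from $\widetilde T$. The out-degree of a block-vertex equals the out-degree of its bottom, so out-degree-one vertices of $\Phi(\widetilde T)$ arise exactly from blocks whose bottom is black, which is the information the colour encodes. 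Expanding a multilabelled vertex and then re-collecting its chain recovers the block, and the colouring rules match on both sides, so $\Phi$ and $\Psi$ are mutually inverse.

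The main obstacle, and the point deserving the most care, is the bookkeeping at out-degree-one vertices: these can arise in $\widetilde T$ for two genuinely different reasons, namely an edge internal to an expanded label block versus an honest single-child edge of the underlying tree, and the whole argument hinges on the two colours separating these cases cleanly and consistently in both directions. I would verify in particular that the block decomposition is forced (the terminal vertex of a block is unambiguous), that no white vertex can be a block bottom and no black vertex an internal chain vertex, and that the left-to-right order of children is preserved throughout, so that $\Psi$ and $\Phi$ are well defined as maps of ordered trees.
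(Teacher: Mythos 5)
Your proposal is correct and is essentially the paper's own proof: both expand each label set $\{a_1<\dots<a_k\}$ into a chain of $k$ singly-labelled vertices with the original subtrees attached at the bottom, use the two colours on out-degree-one vertices to distinguish chain-internal edges from genuine edges, and invert by collapsing maximal chains back into blocks. The only difference is that your colour convention is swapped relative to the paper's (the paper colours chain-internal vertices black and block bottoms white), which is immaterial.
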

\begin{proof}
  Consider an ordered free multilabelled increasing tree $T \in \widehat{\mathcal{O}}(m)$ with label set $[m]$.
	In order to construct an ordered increasing tree $\widetilde{T} \in \widetilde{\mathcal{O}}(m)$ of size $m$, where each node of out-degree $1$ is either coloured black or white, do the following procedure for each node $v \in T$. Let $\ell(v) = \{\ell^{[1]}(v), \dots, \ell^{[k]}(v)\}$, for a $k \ge 1$, be the label set associated to $v$, where we assume that $\ell^{[1]}(v) < \cdots < \ell^{[k]}(v)$. Then replace $v$ by a chain $v_{1} - v_{2} - \cdots - v_{k}$ of $k$ nodes with respective labels $\ell^{[1]}(v), \ell^{[2]}(v), \dots, \ell^{[k]}(v)$, where the first $k-1$ nodes, i.e., $v_{1}, \dots, v_{k-1}$, are coloured black and $v_{k}$ is coloured white. Moreover, the original subtrees of $v$ will be the subtrees of $v_{k}$.
	
	Obviously, the resulting tree $\widetilde{T}$ is contained in $\widetilde{\mathcal{O}}(m)$, and the original tree $T \in \widehat{\mathcal{O}}(m)$ can be reobtained in a straightforward way by pushing together chains of black nodes with the subsequent white node. The bijection is exemplified in Figure~\ref{fig:MultBij}.
\end{proof}
\begin{figure}
\begin{center}
  \includegraphics[height=4cm]{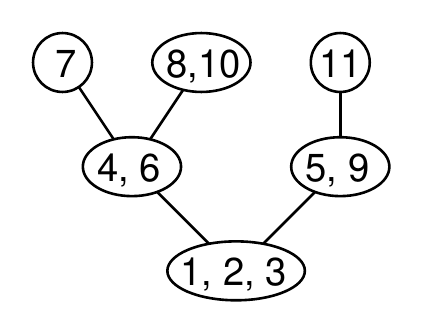} \raisebox{2.5cm}{$\Longleftrightarrow$} \includegraphics[height=5cm]{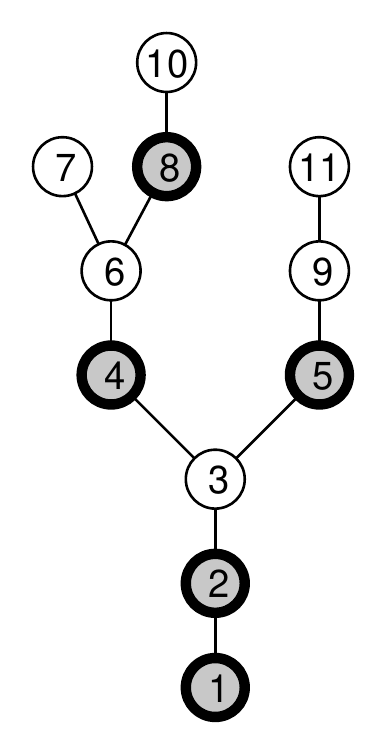}
\end{center}
\caption{An ordered free multilabelled increasing tree with $11$ labels and the corresponding ordered increasing tree of size $11$, where nodes of out-degree $1$ are coloured either black or white.\label{fig:MultBij}}
\end{figure}

\subsection{Combinatorial families of free multilabelled increasing trees}
Here we give a few concrete examples of combinatorial families of free multilabelled increasing trees. Of course, due to Corollary~\ref{cor:FreeMultiUniInc}, many well-known enumeration results for unilabelled increasing trees have its correspondence in results for free multilabelled increasing trees.

\begin{example}[\emph{Ordered trees without nodes of out-degree $1$}]
We consider the family $\widehat{\mathcal{T}}$ of free multilabelled increasing trees with degree-weight generating function $\varphi(t) = \sum_{j \ge 0} \varphi_{j} t^{j} = \frac{1}{1-t} - t$, i.e., $\varphi_{j}=1$, for $j \neq 1$, and $\varphi_{1}=0$. Due to Corollary~\ref{cor:FreeMultiUniInc} and the well-known enumeration result for unilabelled ordered increasing trees (often called plane-oriented recursive trees), see, e.g., \cite{BerFlaSal1992,MahSmy1995}, we obtain the following result.
\begin{coroll}
  The number $T_{m}$ of ordered free multilabelled increasing trees without nodes of out-degree $1$ and with $m$ labels is given as follows:
	\begin{equation*}
	  T_{m} = (2m-3)!! = \frac{(2m-2)!}{2^{m-1} \, (m-1)!}, \quad m \ge 1.
	\end{equation*}
\end{coroll}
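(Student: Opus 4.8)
The plan is to reduce the statement to a classical enumeration result for unilabelled increasing trees by means of Corollary~\ref{cor:FreeMultiUniInc}. The family at hand has degree-weight generating function $\varphi(t) = \frac{1}{1-t} - t$, and Corollary~\ref{cor:FreeMultiUniInc} asserts $T_m = \widetilde{T}_m$, where $\widetilde{T}_m$ counts unilabelled increasing trees with the shifted degree-weight generating function $\widetilde{\varphi}(t) = \varphi(t) + t$. The essential observation is that the shift by $t$ exactly cancels the $-t$ term, so that $\widetilde{\varphi}(t) = \frac{1}{1-t}$. Hence $\widetilde{\mathcal{T}}$ is nothing but the family of unilabelled ordered increasing trees (plane-oriented recursive trees), whose counting sequence is well known; the combinatorial content of the corollary is precisely this identification.

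First I would write down the differential equation for the exponential generating function $\widetilde{T}(z) = \sum_{m \ge 1} \widetilde{T}_m \frac{z^m}{m!}$, which is the instance $k=1$ of \eqref{eqn:klabelDEQ}, namely $\widetilde{T}'(z) = \widetilde{\varphi}(\widetilde{T}(z)) = \frac{1}{1 - \widetilde{T}(z)}$ with $\widetilde{T}(0) = 0$. Separation of variables gives $(1 - \widetilde{T})\widetilde{T}' = 1$, and integrating subject to the initial condition yields $\widetilde{T} - \frac{\widetilde{T}^2}{2} = z$. Solving this quadratic and selecting the branch with $\widetilde{T}(0) = 0$ produces the closed form $\widetilde{T}(z) = 1 - \sqrt{1 - 2z}$.

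Then I would extract coefficients from the binomial expansion of $\sqrt{1-2z}$. Computing $m![z^m]\widetilde{T}(z)$, each factor of the falling product contributes $(\frac{1}{2} - j)(-2) = 2j-1$, so that the product over $j = 0, \dots, m-1$ equals $-(2m-3)!!$, and the two stray sign factors cancel, leaving $\widetilde{T}_m = (2m-3)!!$; combined with $T_m = \widetilde{T}_m$ this is the first claimed expression. The alternative form then follows from the standard identity $(2m-3)!! = \frac{(2m-2)!}{2^{m-1}(m-1)!}$, obtained by factoring out the even factors $(2m-2)!! = 2^{m-1}(m-1)!$ from $(2m-2)!$. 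The computation is entirely routine, and I do not anticipate any genuine obstacle; the only real step is the sign-cancellation remark $\widetilde{\varphi}(t) = \varphi(t)+t = \frac{1}{1-t}$ that pins the problem to the plane-oriented recursive tree count.
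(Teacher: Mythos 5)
Your proposal is correct and follows essentially the same route as the paper: both reduce the problem via Corollary~\ref{cor:FreeMultiUniInc} to the observation that $\widetilde{\varphi}(t)=\varphi(t)+t=\frac{1}{1-t}$, i.e., to counting unilabelled ordered increasing trees (plane-oriented recursive trees). The only difference is that the paper simply cites the classical enumeration $(2m-3)!!$ for that family, whereas you rederive it by solving $\widetilde{T}'(z)=\frac{1}{1-\widetilde{T}(z)}$ to get $\widetilde{T}(z)=1-\sqrt{1-2z}$ and extracting coefficients, which is a correct and routine verification.
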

\end{example}

\begin{example}[\emph{Unordered trees without nodes of out-degree $1$}]
We consider the family $\widehat{\mathcal{T}}$ of free multilabelled increasing trees with degree-weight generating function $\varphi(t) = \sum_{j \ge 0} \varphi_{j} t^{j} = e^{t} - t$, i.e., $\varphi_{j}=\frac{1}{j!}$, for $j \neq 1$, and $\varphi_{1}=0$. Again, Corollary~\ref{cor:FreeMultiUniInc} and the well-known enumeration result for unilabelled unordered increasing trees (usually called recursive trees), see, e.g., \cite{BerFlaSal1992,MahSmy1995}, yields the following result.
\begin{coroll}
  The number $T_{m}$ of unordered free multilabelled increasing trees without nodes of out-degree $1$ and with $m$ labels is given as follows:
	\begin{equation*}
	  T_{m} = (m-1)!, \quad m \ge 1.
	\end{equation*}
\end{coroll}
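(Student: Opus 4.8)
The plan is to reduce the enumeration to the classical family of recursive trees by means of Corollary~\ref{cor:FreeMultiUniInc} and then to integrate the resulting first-order differential equation in closed form. With the degree-weight generating function $\varphi(t) = e^{t} - t$ at hand, I would first record that the associated shifted weight function is
\[
\widetilde{\varphi}(t) = \varphi(t) + t = e^{t}.
\]
By Corollary~\ref{cor:FreeMultiUniInc} the number $T_{m}$ of unordered free multilabelled increasing trees with $m$ labels therefore coincides with the number $\widetilde{T}_{m}$ of unilabelled increasing trees of size $m$ whose degree-weight generating function is $\widetilde{\varphi}(t) = e^{t}$, i.e.\ with the number of recursive trees of size $m$.

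Next I would determine $\widetilde{T}_{m}$ explicitly. For the unilabelled increasing tree family the generating function $\widetilde{T}(z) = \sum_{m \ge 1} \widetilde{T}_{m} \frac{z^{m}}{m!}$ satisfies the instance $k = 1$ of \eqref{eqn:klabelDEQ}, namely $\widetilde{T}'(z) = \widetilde{\varphi}(\widetilde{T}(z)) = e^{\widetilde{T}(z)}$ with $\widetilde{T}(0) = 0$. Separation of variables gives $e^{-\widetilde{T}} \, d\widetilde{T} = dz$, hence $-e^{-\widetilde{T}(z)} = z + C$; the initial condition $\widetilde{T}(0) = 0$ fixes $C = -1$, so that $e^{-\widetilde{T}(z)} = 1 - z$ and consequently
\[
\widetilde{T}(z) = -\ln(1 - z) = \sum_{m \ge 1} \frac{z^{m}}{m}.
\]
Extracting coefficients yields $\widetilde{T}_{m} = m! \, [z^{m}] \widetilde{T}(z) = m! \cdot \frac{1}{m} = (m-1)!$, and combining this with the identity $T_{m} = \widetilde{T}_{m}$ from the first step gives the claim.

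There is essentially no analytic obstacle here: once the reduction to $\widetilde{\varphi}(t) = e^{t}$ is in place, the differential equation is the textbook one for recursive trees and integrates immediately. The only point requiring care is the bookkeeping of the reduction itself — namely verifying that suppressing the degree-$1$ weight in $e^{t}$ (so that $\varphi_{1} = 0$) and then re-adding $t$ in $\widetilde{\varphi}$ recovers exactly $e^{t}$, so that the corollary applies verbatim. Since this is precisely the content of Corollary~\ref{cor:FreeMultiUniInc} with $\widetilde{\varphi}(t) = \varphi(t) + t$, one may alternatively just quote the well-known enumeration of recursive trees instead of re-deriving $\widetilde{T}(z)$, which is the shortest route to the stated formula.
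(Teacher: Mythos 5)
Your proposal is correct and follows exactly the paper's route: apply Corollary~\ref{cor:FreeMultiUniInc} with $\varphi(t)=e^{t}-t$ to reduce to unilabelled unordered increasing (recursive) trees with $\widetilde{\varphi}(t)=e^{t}$, then use their enumeration $(m-1)!$. The only difference is that you re-derive the recursive-tree count by solving $\widetilde{T}'(z)=e^{\widetilde{T}(z)}$ explicitly, whereas the paper simply cites this classical result.
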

\end{example}

\begin{example}[\emph{Strict-binary trees}]
We consider the family $\widehat{\mathcal{T}}$ of strict-binary free multilabelled increasing trees, which correspond to increasingly free multilabelled weighted ordered trees with degree-weight generating function $\varphi(t) = \sum_{j \ge 0} \varphi_{j} t^{j} \newline = 1+t^{2}$. Corollary~\ref{cor:FreeMultiUniInc} shows that the number $T_{m}$ of such trees with $m$ labels is equal to the number of increasingly labelled unary-binary trees (= increasingly labelled Motzkin tree) of size $m$. Due to Proposition~\ref{prop:FreeMulti} the exponential generating function $T(z)$ of the number of trees with $m$ labels satisfies the differential equation
\begin{equation*}
  T'(z) = 1 + T(z) + T(z)^{2}, \quad T(0) = 0.
\end{equation*}
Solving this differential equation by standard methods and extracting coefficients yields the following results.
\begin{coroll}
  The exponential generating function $T(z) = \sum_{m \ge 1} T_{m} \frac{z^{m}}{m!}$ of the number $T_{m}$ of strict-binary free multilabelled increasing trees with $m$ labels is given as follows:
	\begin{equation*}
	  T(z) = \frac{1}{\sqrt{3} \cot\big(\frac{\sqrt{3} \, z}{2}\big) -1} = \frac{2 \left(e^{i \sqrt{3} \, z} - 1\right)}{\sqrt{3} i + 1 + (\sqrt{3} i -1) e^{i \sqrt{3} \, z}}.
	\end{equation*}
	The numbers $T_{m}$ are for $m \ge 1$ given by the following explicit formula:
	\begin{equation*}
	  T_{m} = \sum_{k=1}^{m} 3^{\frac{m-k}{2}} \cos\left(\frac{(3m+2-5k) \pi}{6}\right) \sum_{\ell=1}^{k} \binom{k}{\ell} (-1)^{k-\ell} \ell^{m}.
	\end{equation*}
\end{coroll}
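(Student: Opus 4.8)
The plan is to integrate the closed-form differential equation supplied by Proposition~\ref{prop:FreeMulti} and then to extract coefficients through a double expansion. Specializing $\varphi(t)=1+t^2$ in \eqref{eqn:FreeMultiDEQ} gives the separable Riccati equation $T'(z)=1+T(z)+T(z)^2$ with $T(0)=0$. Completing the square as $1+T+T^2=(T+\tfrac12)^2+\tfrac34$ and integrating $\int_0^{T}\frac{du}{1+u+u^2}=z$ yields $\frac{2}{\sqrt3}\bigl(\arctan\frac{2T+1}{\sqrt3}-\frac{\pi}{6}\bigr)=z$. Solving for $T$ gives the trigonometric closed form, and rewriting $\tan$ and $\cot$ through $w:=e^{i\sqrt3 z}$ produces the rational-in-$w$ expression for $T(z)$ recorded in the statement; this last form is the one I will use for coefficient extraction.

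For the explicit formula I would first expand $T(z)$ as a power series in $u:=e^{i\sqrt3 z}-1$. Setting $\alpha:=\sqrt3\,i$ and writing $T$ as a rational function of $w=1+u$, the denominator factors as $(\alpha+1)+(\alpha-1)w=(\alpha-1)\bigl((1+\beta)+u\bigr)$ with $\beta:=\frac{\alpha+1}{\alpha-1}=e^{-i\pi/3}$, so that $T=\frac{2}{\alpha-1}\cdot\frac{u}{(1+\beta)+u}$. A geometric-series expansion then gives $T=\sum_{k\ge1}a_k u^k$ with $a_k=\frac{2}{\alpha-1}\cdot\frac{(-1)^{k-1}}{(1+\beta)^k}$. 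Converting the constants to polar form via $\alpha-1=2e^{2\pi i/3}$ and $1+\beta=\sqrt3\,e^{-i\pi/6}$ collapses the coefficients to the compact shape $a_k=3^{-k/2}e^{i(2-5k)\pi/6}$; since $u=O(z)$, this is a legitimate formal power series in $z$ in which only the terms $k\le m$ contribute to $z^m$.

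It then remains to pass from the expansion in $u$ to one in $z$. Here I would invoke $\bigl(e^{i\sqrt3 z}-1\bigr)^k=\sum_{m\ge k}k!\,S(m,k)\,(i\sqrt3)^m\frac{z^m}{m!}$, where $S(m,k)$ denotes the Stirling numbers of the second kind, together with the classical identity $k!\,S(m,k)=\sum_{\ell=1}^{k}\binom{k}{\ell}(-1)^{k-\ell}\ell^m$. Extracting $T_m=m!\,[z^m]T(z)$ gives $T_m=(i\sqrt3)^m\sum_{k=1}^{m}a_k\,k!\,S(m,k)$. Because $T_m$ is real I would pass to real parts; combining the phases as $(i\sqrt3)^m a_k=3^{(m-k)/2}i^m e^{i(2-5k)\pi/6}=3^{(m-k)/2}e^{i(3m+2-5k)\pi/6}$ turns the complex exponential into $\cos\!\bigl(\frac{(3m+2-5k)\pi}{6}\bigr)$ and reproduces the claimed formula exactly.

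The separation of variables and the geometric expansion are routine; the main obstacle is the phase bookkeeping in the last step. One must correctly gather the three complex constants $\alpha-1$, $1+\beta$ and $i^m$ into a single cosine argument, and check that the real, $m$-independent combination really is $3^{-k/2}e^{i(2-5k)\pi/6}$ — the only freedom being an additive multiple of $2\pi$ in the phase (indeed the raw computation yields $a_k=3^{-k/2}e^{i(7k-10)\pi/6}$, and $(7k-10)-(2-5k)=12(k-1)$). Verifying that the final expression is manifestly real and that the $k$-sum truncates at $k=m$, because $S(m,k)=0$ for $k>m$, then completes the argument.
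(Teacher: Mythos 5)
Your proposal is correct and follows essentially the same route as the paper, whose entire proof consists of specializing Proposition~\ref{prop:FreeMulti} to $\varphi(t)=1+t^{2}$ and ``solving this differential equation by standard methods and extracting coefficients''; you simply supply the details (separation of variables for the Riccati equation, passage to $w=e^{i\sqrt{3}\,z}$, the geometric expansion, and the identity $k!\,S(m,k)=\sum_{\ell=1}^{k}\binom{k}{\ell}(-1)^{k-\ell}\ell^{m}$), and your phase bookkeeping as well as the truncation of the $k$-sum at $k=m$ check out.

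One remark: your integration in fact yields $T(z)=\frac{2}{\sqrt{3}\cot(\sqrt{3}\,z/2)-1}$, which is \emph{twice} the first displayed expression of the statement; that printed form is a typo in the paper, since $T'(0)=\varphi_{0}=1$ forces $T(z)=z+O(z^{2})$, whereas $\frac{1}{\sqrt{3}\cot(\sqrt{3}\,z/2)-1}=\frac{z}{2}+O(z^{2})$. The exponential form that you actually use for coefficient extraction, and the explicit formula for $T_{m}$, are the correct and mutually consistent ones, so this does not affect your argument --- but you should not assert, as you do, that solving the equation reproduces the first (trigonometric) display verbatim.
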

\begin{remark}
The sequence $(T_m)$ begins with
\begin{equation*}
  (T_m)_{m \ge 1} = (1, 1, 3, 9, 39, 189, 1107,\dots)
\end{equation*}
and in the OEIS the numbers $T_{m}$ appear as \href{https://oeis.org/A080635}{A080635}.
\end{remark}
\end{example}

\begin{example}[\emph{Unary-binary trees}]
We consider the family $\widehat{\mathcal{T}}$ of unary-binary free multilabelled increasing trees (i.e., increasingly free multilabelled Motzkin trees), which correspond to increasingly free multilabelled weighted ordered trees with degree-weight generating function $\varphi(t) = \sum_{j \ge 0} \varphi_{j} t^{j} = 1+t+t^{2}$. According to Corollary~\ref{cor:FreeMultiUniInc}, the number $T_{m}$ of such trees with $m$ labels is enumerated by the number of unilabelled binary increasing trees of size $m$. The well-known enumeration result for the latter tree family (see, e.g., \cite{BerFlaSal1992}) immediately leads to the following result.
\begin{coroll}
  The number $T_{m}$ of unary-binary free multilabelled increasing trees with $m$ labels is given as follows:
	\begin{equation*}
	  T_{m} = m!, \quad m \ge 1.
	\end{equation*}
\end{coroll}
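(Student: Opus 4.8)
The plan is to invoke Corollary~\ref{cor:FreeMultiUniInc}, which reduces the enumeration of free multilabelled increasing trees to that of ordinary (unilabelled) increasing trees carrying the shifted degree-weight generating function $\widetilde{\varphi}(t) := \varphi(t) + t$. First I would compute this shift explicitly: with $\varphi(t) = 1 + t + t^2$ one gets $\widetilde{\varphi}(t) = (1 + t + t^2) + t = 1 + 2t + t^2 = (1+t)^2$, so that $T_m = \widetilde{T}_m$ equals the number of unilabelled increasing trees whose degree-weight generating function is $(1+t)^2$.

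Next I would recognize that the degree weights $\varphi_j = \binom{2}{j}$ (namely $1, 2, 1$ for out-degrees $0, 1, 2$ and zero otherwise) are precisely those of the $d$-ary increasing tree family with $d = 2$, that is, the family of binary increasing trees discussed in the introduction. The well-known enumeration result for this family (see, e.g., \cite{BerFlaSal1992}, and recall the classical bijection with permutations) states that the number of binary increasing trees of size $m$ is $m!$, which yields the claim at once. Alternatively, and more self-containedly, I would verify this directly: the differential equation $T'(z) = 1 + T(z) + T(z)^2$ supplied by Proposition~\ref{prop:FreeMulti} is equivalent, via the substitution encoded in the corollary, to the separable equation $\widetilde{T}'(z) = (1 + \widetilde{T}(z))^2$ with $\widetilde{T}(0) = 0$. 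Separation of variables gives $-1/(1 + \widetilde{T}) = z - 1$, hence $\widetilde{T}(z) = z/(1-z) = \sum_{m \ge 1} z^m$; comparing with $\widetilde{T}(z) = \sum_{m \ge 1} \widetilde{T}_m \, z^m/m!$ then forces $\widetilde{T}_m = m!$.

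There is essentially no hard step here. The entire content lies in the bookkeeping identity $\varphi(t) + t = (1+t)^2$, which brings the shifted weight function into the recognizable binary-increasing-tree form; the substantive work has already been carried out in Corollary~\ref{cor:FreeMultiUniInc} and in the cited enumeration of binary increasing trees. The only point warranting the slightest care is the treatment of the initial condition when solving the separable ODE, to confirm that the constant of integration produces $\widetilde{T}(0) = 0$ and hence the clean geometric-series expansion from which $T_m = m!$ is read off.
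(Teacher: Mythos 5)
Your proposal is correct and follows exactly the paper's own route: invoke Corollary~\ref{cor:FreeMultiUniInc} to pass to unilabelled increasing trees with degree-weight generating function $\widetilde{\varphi}(t)=\varphi(t)+t=(1+t)^{2}$, i.e.\ binary increasing trees, whose size-$m$ count $m!$ is classical. One slip in your supplementary ODE check: Proposition~\ref{prop:FreeMulti} supplies $T'(z)=\varphi(T(z))+T(z)=(1+T(z))^{2}$, not $T'(z)=1+T(z)+T(z)^{2}$ (the latter is the equation of the \emph{strict-binary} example, with a different solution); once this is corrected, your separation-of-variables computation giving $T(z)=z/(1-z)$ and the conclusion $T_{m}=m!$ stand as written.
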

\end{example}

\begin{example}[\emph{Binary trees}]
We consider the family $\widehat{\mathcal{T}}$ of binary free multilabelled increasing trees, which correspond to increasingly free multilabelled weighted ordered trees with degree-weight generating function $\varphi(t) = \sum_{j \ge 0} \varphi_{j} t^{j} = (1+t)^{2}$.
Due to Proposition~\ref{prop:FreeMulti} the exponential generating function $T(z)$ of the number of trees with $m$ labels satisfies the differential equation
\begin{equation*}
  T'(z) = 1 + 3 T(z) + T(z)^{2}, \quad T(0) = 0.
\end{equation*}
Solving this differential equation by standard methods and extracting coefficients yields the following results.
\begin{coroll}
  The exponential generating function $T(z)$ of the number $T_{m}$ of binary free multilabelled increasing trees with $m$ labels is given as follows:
	\begin{equation*}
	  T(z) = \frac{2 \left(1-e^{\sqrt{5} \, z}\right)}{(3-\sqrt{5}) e^{\sqrt{5} \, z} - 3 - \sqrt{5}}.
	\end{equation*}
	The numbers $T_{m}$ are for $m \ge 1$ given by the following explicit formula:
	\begin{equation*}
	  T_{m} = \sqrt{5} \sum_{k \ge 1} \left(\frac{7-3\sqrt{5}}{2}\right)^{k} \cdot \left(\sqrt{5} \, k\right)^{m}.
	\end{equation*}
\end{coroll}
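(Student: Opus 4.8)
The starting point is the first-order differential equation furnished by Proposition~\ref{prop:FreeMulti}: since $\varphi(t)=(1+t)^2$ we have $\varphi(T)+T=1+3T+T^2$, so $T(z)$ is determined by the Riccati-type equation $T'(z)=1+3T(z)+T^2(z)$ with $T(0)=0$. The plan is to solve this separable equation in closed form and then to extract coefficients by expanding the resulting rational function of $e^{\sqrt5 z}$ as a geometric series.

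First I would factor the quadratic on the right-hand side. Its roots are $r_{1,2}=\frac{-3\pm\sqrt5}{2}$, so that $1+3T+T^2=(T-r_1)(T-r_2)$ with $r_1-r_2=\sqrt5$ and $r_1r_2=1$. Separating variables and using the partial fraction decomposition
\[
\frac{1}{(T-r_1)(T-r_2)}=\frac{1}{\sqrt5}\Big(\frac{1}{T-r_1}-\frac{1}{T-r_2}\Big),
\]
integration yields $\frac{1}{\sqrt5}\ln\frac{T-r_1}{T-r_2}=z+C$, and the initial condition $T(0)=0$ fixes $C$ through the value $\frac{r_1}{r_2}$. Exponentiating and solving the resulting linear equation for $T$ (where the identity $r_1r_2=1$ simplifies the algebra considerably) gives $T(z)=\frac{1-e^{\sqrt5 z}}{r_2-r_1 e^{\sqrt5 z}}$, which after clearing the factors of $2$ in the $r_i$ is precisely the stated closed form.

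For the explicit formula I would set $w=e^{\sqrt5 z}$ and factor $-(3+\sqrt5)$ out of the denominator, turning the rational factor into $\frac{1}{1-qw}$ with $q=\frac{3-\sqrt5}{3+\sqrt5}=\frac{7-3\sqrt5}{2}$. Expanding $\frac{1}{1-qw}=\sum_{k\ge0}q^kw^k$ and combining the two shifted copies coming from the numerator $\frac{2(w-1)}{3+\sqrt5}$ produces a single sum $(1-q)\sum_{k\ge1}q^{k-1}w^k$, up to an additive constant irrelevant for $m\ge1$. Substituting the Taylor expansion $w^k=\sum_{m\ge0}\frac{(\sqrt5 k)^m}{m!}z^m$ and reading off $T_m=m!\,[z^m]T(z)$ gives $T_m=\frac{2(1-q)}{3+\sqrt5}\sum_{k\ge1}q^{k-1}(\sqrt5 k)^m$; simplifying the prefactor via $1-q=\frac{\sqrt5(3-\sqrt5)}{2}$ collapses it to $\sqrt5\,q$, which yields the claimed formula.

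The one point requiring care is the legitimacy of the double-series manipulation. The geometric expansion $\frac{1}{1-qw}=\sum_k q^kw^k$ is valid only where $|qw|<1$, i.e.\ in a neighbourhood of $z=0$, since $|q|<1$ and $w\to1$ as $z\to0$; within this neighbourhood the resulting coefficient sum $\sqrt5\sum_k q^k(\sqrt5 k)^m$ converges because the fixed power $k^m$ is dominated by the geometric factor $q^k$. As only the formal Taylor coefficients at the origin are needed, working in this neighbourhood suffices, and the interchange of the summation over $k$ with the extraction of $[z^m]$ is thereby justified.
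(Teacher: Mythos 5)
Your proposal is correct, and it is exactly the route the paper intends: the paper merely states that the corollary follows from the Riccati equation $T'(z)=1+3T(z)+T(z)^2$ of Proposition~\ref{prop:FreeMulti} ``by standard methods and extracting coefficients,'' and your separation of variables, partial fractions with roots $r_{1,2}=\frac{-3\pm\sqrt5}{2}$, and geometric expansion in $q\,e^{\sqrt5 z}$ with $q=\frac{7-3\sqrt5}{2}$ supply precisely those details. The closed form, the prefactor simplification to $\sqrt5\,q$, and the justification of the coefficient extraction are all sound.
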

\begin{remark}
The sequence $(T_m)$ begins with
\begin{equation*}
  (T_m)_{m \ge 1} = (1, 3, 11, 51, 295, 2055, 16715,\dots)
\end{equation*}
and in the OEIS the numbers $T_{m}$ appear as \href{https://oeis.org/A230008}{A230008}, but without giving a combinatorial interpretation of this enumeration sequence.
\end{remark}
\end{example}

\subsection{Free multilabelled increasing trees and hook-length formul\ae{}\label{ssec:FreeMultHook}}
In order to formulate hook-length formul\ae{} associated to free multilabelled increasing trees we consider a tree $T$ and assume that each node $v \in T$ has got a certain bucket-size $b(v) \in \mathbb{N}_{\ge 1}$, i.e., node $v$ can hold $b(v)$ labels. Let us assume that the total bucket-size of $T$ is $m = \sum_{v \in T} b(v)$, such that $T$ can hold $m$ labels in total. We might consider then $b : T \to \mathbb{N}_{\ge 1}$ as a bucket-size function with $m$ labels for $T$. In this context it is useful to consider an extension of the term hook-length of a node $v \in T$, which we call bucket hook-length $h^{[b]}(v)$ of $v$, which is defined as the sum of the bucket-sizes of all descendants of $v$:
\begin{equation*}
  h^{[b]}(v) := \sum_{\begin{smallmatrix} u \in T:\\ \text{$u$ descendant of $v$}\end{smallmatrix}} b(u).
\end{equation*}
We can then formulate a generalization of Lemma~\ref{HookBijOutlookLem1} for the number of increasing multilabellings of $T$ taking into account the bucket-sizes of the nodes of $T$.
\begin{lemma}
\label{HookBijMulti}
  Given a bucket-size function $b : T \to \mathbb{N}_{\ge 1}$ with $m$ labels for a tree $T$ with distinguishable nodes.
  Then it holds that the number $|\mathcal{L}(T)|$ of different increasing multilabellings of $T$ with $m$ labels, such that each node $v \in T$ gets exactly $b(v)$ labels, is given as follows:
  \begin{equation*}
    |\mathcal{L}(T)| = \frac{m!}{\prod_{v \in T}\left(h^{[b]}(v)\right)^{\underline{b(v)}}}.
  \end{equation*}
\end{lemma}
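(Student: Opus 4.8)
The plan is to prove the formula by induction on the size $n = |T|$, closely following the strategy used for Lemma~\ref{HookBijOutlookLem1}. The base case $n=1$ is immediate: the single node must receive all $m$ labels, so there is exactly one increasing multilabelling, while on the right-hand side the lone node $v$ satisfies $h^{[b]}(v) = b(v) = m$, giving $\fallfak{m}{m} = m!$ and hence the ratio $m!/m! = 1$.

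For the inductive step I would first record the structural observation that drives the argument: since every node other than the root $v_0$ is a descendant of $v_0$, the increasing condition forces all their labels to exceed every label of $v_0$. Consequently the root is not free at all---it must receive precisely the $b(v_0)$ smallest labels $\{1, \dots, b(v_0)\}$. This is exactly why no combinatorial factor for the root will appear, and why the root's falling-factorial term must ultimately cancel. Next I would perform the standard subtree decomposition: writing $T_1, \dots, T_d$ for the subtrees dangling from $v_0$ and $m_i := \sum_{v \in T_i} b(v)$ for their total bucket-sizes (so that $m = b(v_0) + m_1 + \cdots + m_d$), the remaining $m - b(v_0)$ labels must be split into blocks of sizes $m_1, \dots, m_d$, one per subtree; after an order-preserving relabelling each $T_i$ becomes an independent instance of the same problem. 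This yields the recurrence
\[
|\mathcal{L}(T)| = \binom{m - b(v_0)}{m_1, \dots, m_d} \prod_{i=1}^{d} |\mathcal{L}(T_i)|.
\]

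Substituting the induction hypothesis $|\mathcal{L}(T_i)| = m_i!\big/\prod_{v \in T_i} \fallfak{(h^{[b]}(v))}{b(v)}$ makes the factorials $m_i!$ cancel against the multinomial, leaving $(m - b(v_0))!\big/\prod_{v \neq v_0} \fallfak{(h^{[b]}(v))}{b(v)}$. Here I would note that $h^{[b]}(v)$ is unchanged whether computed inside $T_i$ or inside $T$, since the descendant set of $v$ is the same, so the partial product over non-root nodes is well defined.

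The final step---and the only place requiring a short computation---is to reinsert the root factor. Because every node is a descendant of $v_0$, its bucket hook-length is the full total, $h^{[b]}(v_0) = m$, so $\fallfak{(h^{[b]}(v_0))}{b(v_0)} = \fallfak{m}{b(v_0)} = m!/(m - b(v_0))!$. Multiplying and dividing the partial product by this term converts the numerator $(m - b(v_0))!$ into $m!$ and extends the product to all of $T$, giving exactly $m!\big/\prod_{v \in T} \fallfak{(h^{[b]}(v))}{b(v)}$, which closes the induction. I do not expect a serious obstacle; the only point demanding care is the bookkeeping that matches the $(m - b(v_0))!$ arising from the labels outside the root against the falling factorial $\fallfak{m}{b(v_0)}$ contributed by the root's hook. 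This is precisely the bucket analogue of the cancellation $\frac{k!}{\fallfak{k}{k}} = 1$ already seen in the base case of Lemma~\ref{HookBijOutlookLem1}, now with the uniform bucket-size $k$ replaced by the variable $b(v_0)$.
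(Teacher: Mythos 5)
Your proposal is correct and follows essentially the same route as the paper's proof: the same subtree decomposition at the root, the same multinomial recurrence, and the same cancellation of the root's falling factorial $\fallfak{m}{b(v_0)} = m!/(m-b(v_0))!$ against the numerator. The only cosmetic difference is that you induct on the tree size $n=|T|$ while the paper inducts on the number of labels $m$; both parameters strictly decrease for the subtrees, so this changes nothing of substance.
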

\begin{proof}
  The formula can be shown easily by using induction on the number $m$ of labels (i.e., the total bucket-size of $T$) distributed amongst $T$.
  For $m=1$ there is exactly one increasing multilabelling of $T$ and $h^{[b]}(v)=1$ such that the formula holds.
  Let $m > 1$: we assume that the root of $T$ has \emph{out-degree} $r$ and that its bucket-size is $b(\text{root})$. Let us denote the subtrees of the root by by $T_{1}, \dots, T_{r}$, and let us assume that the total bucket-sizes of the subtrees are given by $s_{j} := \sum_{v \in T_{j}} b(v)$, for $1 \le j \le r$.
    It holds that after an order preserving relabelling each of the subtrees $T_{1}, \dots, T_{r}$ is itself a multilabelled tree.
    Taking into account that the root node of $T$ is labelled by $\{1, \dots, b(\text{root})\}$ and that the remaining nodes are distributed over the nodes of the subtrees one obtains:
    \begin{equation*}
      |\mathcal{L}(T)| = \binom{m-b(\text{root})}{s_{1}, s_{2}, \dots, s_{r}} \cdot |\mathcal{L}(T_{1})| \cdot |\mathcal{L}(T_{2})| \cdots |\mathcal{L}(T_{r})|.
    \end{equation*}
    Using the induction hypothesis we further get:
    \begin{equation*}
      |\mathcal{L}(T)| = \frac{(m-b(\text{root}))!}{\prod_{j=1}^{r}s_{j}!} \prod_{j=1}^{r} \frac{s_{j}!}{\prod_{v \in T_{j}}\left(h^{[b]}(v)\right)^{\underline{b(v)}}}
      = \frac{m!}{\prod_{v \in T}\left(h^{[b]}(v)\right)^{\underline{b(v)}}},
    \end{equation*}
    which completes the proof.
\end{proof}

By considering all bucket-size functions with $m$ labels, we get the following hook-length formula for ordered trees as an immediate consequence of Lemma~\ref{HookBijMulti}.
\begin{prop}
Given a family $\widehat{\mathcal{T}}$ of increasingly free multilabelled weighted ordered trees with degree-weight generating function $\varphi(t) = \sum_{j \ge 0} \varphi_{j} t^{j}$, let us denote by $T_{m}$ the number of trees of $\widehat{\mathcal{T}}$ with $m$ labels.
Then, the family $\mathcal{O}$ of ordered trees satisfies the following hook-length formula:
\begin{equation*}
\sum_{T\in\mathcal{O}} \sum_{\begin{smallmatrix} b \: : \: T \to \mathbb{N}_{\ge 1},\\ \text{with} \; \sum_{v \in T} b(v) = m\end{smallmatrix}} \prod_{v\in T} \left(\frac{\varphi_{\odeg(v)}}{\left(h^{[b]}(v)\right)^{\underline{b(v)}}}\right) = \frac{T_{m}}{m!}.
\end{equation*}
\end{prop}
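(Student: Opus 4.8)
The plan is to compute the total weight $T_m$ of free multilabelled increasing trees in $\widehat{\mathcal{T}}$ by a decomposition argument, exactly mirroring the derivation of Theorem~\ref{HookBijIITheHook} but now using the bucket-size refinement from Lemma~\ref{HookBijMulti}. By definition, every free multilabelled increasing tree with $m$ labels consists of an underlying ordered tree $T \in \mathcal{O}$, carrying the degree-weight $w(T) = \prod_{v \in T} \varphi_{\odeg(v)}$, together with an increasing multilabelling of $T$ using the label set $[m]$. Such a labelling induces a bucket-size function $b : T \to \mathbb{N}_{\ge 1}$ via $b(v) := |\ell(v)|$, and necessarily $\sum_{v \in T} b(v) = m$. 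Conversely, once $T$ and a bucket-size function $b$ with total $m$ are fixed, the increasing multilabellings of $T$ realizing precisely these bucket sizes are counted by $|\mathcal{L}(T)|$ as computed in Lemma~\ref{HookBijMulti}. Hence, first I would group the objects according to their shape $T$ and induced bucket-size function $b$ and write
\begin{equation*}
  T_m = \sum_{T \in \mathcal{O}} w(T) \sum_{\begin{smallmatrix} b \: : \: T \to \mathbb{N}_{\ge 1},\\ \sum_{v \in T} b(v) = m\end{smallmatrix}} |\mathcal{L}(T)|,
\end{equation*}
observing that only the finitely many ordered trees of size at most $m$ contribute, since a tree with more than $m$ nodes admits no bucket-size function with every $b(v) \ge 1$ summing to $m$; this makes the double sum finite and well-defined.

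The second step is purely substitutional. I would insert the closed form $|\mathcal{L}(T)| = m!/\prod_{v \in T}\left(h^{[b]}(v)\right)^{\underline{b(v)}}$ from Lemma~\ref{HookBijMulti} and expand $w(T) = \prod_{v \in T} \varphi_{\odeg(v)}$, pulling the common factor $m!$ out of the sums, which yields
\begin{equation*}
  T_m = m! \sum_{T \in \mathcal{O}} \sum_{\begin{smallmatrix} b \: : \: T \to \mathbb{N}_{\ge 1},\\ \sum_{v \in T} b(v) = m\end{smallmatrix}} \prod_{v \in T} \frac{\varphi_{\odeg(v)}}{\left(h^{[b]}(v)\right)^{\underline{b(v)}}}.
\end{equation*}
Dividing both sides by $m!$ then reproduces exactly the asserted hook-length formula.

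The argument is essentially bookkeeping, so the only genuine point requiring care — and the step I expect to be the main (if modest) obstacle — is the justification of the first displayed identity, namely that grouping the free multilabelled increasing trees by their underlying shape $T$ and induced bucket-size function $b$ really partitions the set of such trees with $m$ labels, with the count in each group given by Lemma~\ref{HookBijMulti}. Since that lemma is stated for trees with \emph{distinguishable} nodes, I would explicitly note that the nodes of an ordered tree are canonically distinguishable by their planar position, which is precisely what legitimizes applying the lemma shape-by-shape and then summing the weights $w(T)$ over all of $\mathcal{O}$. The remaining manipulations — interchanging the order of summation and factoring $m!$ — are routine and require no further hypotheses.
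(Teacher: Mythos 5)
Your proof is correct and takes essentially the same approach as the paper, which derives this proposition as an immediate consequence of Lemma~\ref{HookBijMulti} by summing over all bucket-size functions with $m$ labels --- precisely the grouping-by-shape-and-bucket-size argument you spell out. The extra details you supply (finiteness of the outer sum, distinguishability of nodes in ordered trees, extraction of the weight $w(T)=\prod_{v\in T}\varphi_{\odeg(v)}$) simply make explicit what the paper leaves implicit.
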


\section{Unilabelled-bilabelled increasing trees}

\subsection{Definition and enumeration results}
According to the definition of families of free multilabelled increasing trees as introduced in Section~\ref{sec:FreeMul}, such tree families do not have any restriction on the number of labels a node can get. However, we can request that each node only has a capacity $k$ and thus can only hold up to $k$ labels. We will consider here exclusively the case $k=2$, which means that each node in the tree gets either one or two labels. More formally, given a label set $\mathcal{M}$ of $m = |\mathcal{M}|$ labels, we call a tree $T$ a \emph{unilabelled-bilabelled} tree with label set $\mathcal{M}$, if each node $v \in T$ has got a set $\ell(v) \subset \mathcal{M}$ of labels of size $1 \le |\ell(v)| \le 2$ satisfying $\ell(v) \cap \ell(w) = \emptyset$, for $v \neq w$, and $\bigcup_{v \in T} \ell(v) = \mathcal{M}$. A unilabelled-bilabelled tree $T$ is called increasing, if it holds that each label of a child node is always larger than all labels of its parent node.

We denote by $\widehat{\mathcal{T}}$ the family of increasingly unilabelled-bilabelled weighted ordered trees (= unilabelled-bilabelled increasing trees), which contains all increasingly unilabelled-bilabelled trees $T \in \mathcal{O}$ of size $|T| \ge 1$ with label sets $\mathcal{M} = \mathcal{M}(T) = \{1, 2, \dots, |\mathcal{M}|\}$ and $|T| \le |\mathcal{M}| \le 2|T|$, and given degree-weight generating function $\varphi(t)$.

Let us denote by $T_{m}$ the number of unilabelled-bilabelled increasing trees of $\widehat{\mathcal{T}}$ with $m$ labels, i.e., label set $\mathcal{M} = [m]$, and by $T(z) := \sum_{m \ge 1} T_{m} \frac{z^{m}}{m!}$ the exponential generating function. Then $T(z)$ is characterized via the following differential equation.
\begin{prop}\label{prop:UniBi}
The exponential generating function $T(z)$ of unilabelled-bilabelled increasing trees $T_{m}$ with $m$ labels and degree-weight generating function $\varphi(t) = \sum_{j \ge 0} \varphi_{j} t^{j}$ satisfies the following second order differential equation:
\begin{equation}\label{eqn:UniBiDEQ}
  T''(z) = \varphi(T(z)) + T'(z) \varphi'(T(z)), \quad T(0)=0, \quad T'(0)=\varphi_{0}.
\end{equation}
\end{prop}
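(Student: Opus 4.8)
The plan is to mimic the recursive decomposition used in the proof of Proposition~\ref{prop:FreeMulti}, with the sole modification that a node may now carry only one or two labels. First I would set up a recurrence for $T_m$ by splitting a tree into its root and the $r \ge 0$ subtrees dangling from it, recording the number $j$ of labels the root receives; since the labelling is increasing, the root necessarily gets the $j$ smallest labels, and after an order-preserving relabelling each subtree is again a unilabelled-bilabelled increasing tree. The only difference to the free case is that the capacity constraint forces $j \in \{1,2\}$, so that
\begin{equation*}
  T_m = \sum_{j=1}^{2} \sum_{r \ge 0} \varphi_r \sum_{s_1 + \cdots + s_r = m-j} \binom{m-j}{s_1, \dots, s_r} T_{s_1} \cdots T_{s_r}, \quad m \ge 1,
\end{equation*}
with $T_0 = 0$ (the inner $j=2$ term being understood to vanish for $m=1$).

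Next I would identify the inner double sum. Writing $A_N := \sum_{r \ge 0} \varphi_r \sum_{s_1+\cdots+s_r=N}\binom{N}{s_1,\dots,s_r} T_{s_1}\cdots T_{s_r}$, the labelled-product interpretation of the exponential generating function from Section~\ref{ssec:WeightedOrdTrees} gives $\varphi(T(z)) = \sum_{N \ge 0} A_N \frac{z^N}{N!}$, since $\sum_{r \ge 0}\varphi_r T(z)^r = \varphi(T(z))$. Separating the cases $j=1$ and $j=2$ then yields the clean shifted relation
\begin{equation*}
  T_m = A_{m-1} + A_{m-2}, \quad m \ge 1,
\end{equation*}
with the convention $A_{-1} = 0$; in particular $T_1 = A_0 = \varphi_0$ and $T_0 = 0$.

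The key step is to read this shifted recurrence back on the level of generating functions. Rather than introducing an antiderivative, I would compute the second derivative directly: from $T''(z) = \sum_{m \ge 2} T_m \frac{z^{m-2}}{(m-2)!}$ together with the relation above, the contribution of the $A_{m-2}$ terms reassembles under $N=m-2$ to $\sum_{N \ge 0} A_N \frac{z^N}{N!} = \varphi(T(z))$, while the contribution of the $A_{m-1}$ terms reassembles under the shift $N=m-1$ to $\sum_{N \ge 1} A_N \frac{z^{N-1}}{(N-1)!} = \frac{d}{dz}\varphi(T(z)) = \varphi'(T(z)) T'(z)$ by the chain rule. Adding the two pieces gives precisely $T''(z) = \varphi(T(z)) + T'(z)\varphi'(T(z))$.

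Finally, the initial conditions fall out of $T_0 = 0$ and $T_1 = \varphi_0$, yielding $T(0) = 0$ and $T'(0) = \varphi_0$. The only point demanding care is the bookkeeping of the two index shifts---recognising that the single-label root ($j=1$) produces a \emph{derivative} of $\varphi(T)$ whereas the double-label root ($j=2$) reproduces $\varphi(T)$ itself---but this is a routine manipulation of exponential generating functions, so I expect no serious obstacle.
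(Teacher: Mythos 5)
Your proposal is correct and follows essentially the same route as the paper: the identical root-plus-subtrees decomposition with the root receiving one or two labels, leading to the recurrence $T_m = A_{m-1}+A_{m-2}$, which the paper then converts to the differential equation via ``standard computations''. Your explicit bookkeeping of the two index shifts---identifying the $j=2$ contribution with $\varphi(T(z))$ and the $j=1$ contribution with $\frac{d}{dz}\varphi(T(z)) = \varphi'(T(z))T'(z)$---simply spells out the step the paper leaves implicit, and it is carried out correctly.
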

\begin{proof}
  The number $T_{m}$ of unilabelled-bilabelled increasing trees with $m$ labels can be counted recursively by distinguishing a tree into the root node and its $r \ge 0$ subtrees and taking into account, whether the root node gets one or two labels. Furthermore, we take into consideration the numbers $s_{1}, \dots, s_{r}$ of labels the subtrees of the root will get. Since, after an order preserving relabelling, the subtrees of the root are itself unilabelled-bilabelled increasing trees, we obtain the following recurrence.
	\begin{align*}
	  T_{m} & = \sum_{r \ge 0} \varphi_{r} \sum_{s_{1} + \cdots + s_{r} = m-1} \binom{m-1}{s_{1}, s_{2}, \dots, s_{r}} T_{s_{1}} T_{s_{2}} \cdots T_{s_{r}}\\
		& \quad \mbox{} + \sum_{r \ge 0} \varphi_{r} \sum_{s_{1} + \cdots + s_{r} = m-2} \binom{m-2}{s_{1}, s_{2}, \dots, s_{r}} T_{s_{1}} T_{s_{2}} \cdots T_{s_{r}}, \quad m \ge 2,
	\end{align*}
	with $T_{0}=0$ and $T_{1} = \varphi_{0}$.
	After standard computations, this recurrence leads to the stated differential equation for the generating function $T(z) = \sum_{m \ge 1} T_{m} \frac{z^{m}}{m!}$.
\end{proof}

\subsection{Unordered unilabelled-bilabelled increasing trees}
We study the combinatorial family $\widehat{\mathcal{T}}$ of unordered unilabelled-bilabelled increasing trees, i.e., unilabelled-bilabelled increasing trees with degree-weight generating function $\varphi(t) = e^{t}$.
\begin{theorem}\label{thm:UnorderedUniBi}
Let $Q(z) = \sum_{m \ge 1} Q_{m} \frac{z^{m}}{m!}$ be defined implicitly via
\begin{equation*}
  z = \int_{0}^{Q(z)} \frac{dt}{2 e^{t} - t -1}.
\end{equation*}
Then, the derivative of the exponential generating function $T(z) = \sum_{m \ge 1} T_{m} \frac{z^{m}}{m!}$ of the number $T_{m}$ of unordered unilabelled-bilabelled increasing trees with $m$ labels can be expressed in terms of $Q(z)$ as follows:
\begin{equation*}
  T'(z) = Q(z) + Q'(z).
\end{equation*}
Thus, the numbers $T_{m}$ are given by
\begin{equation*}
  T_{m} = Q_{m} + Q_{m-1},
\end{equation*}
where the numbers $Q_{m}$ satisfy the following recurrence:
\begin{equation*}
  Q_{m+2} = \sum_{k=0}^{m} \binom{m}{k} \left(Q_{k} + Q_{k+1}\right) Q_{m-k+1}, \quad m \ge 0, \quad Q_{0}=0, \enspace Q_{1}=1.
\end{equation*}
\end{theorem}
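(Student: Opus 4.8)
The plan is to specialize the second-order differential equation of Proposition~\ref{prop:UniBi} to the degree-weight generating function $\varphi(t)=e^t$, for which $\varphi'(t)=e^t$ and $\varphi_0=1$, so that $T(z)$ is the unique formal power series solving
\[
T''(z)=e^{T(z)}\big(1+T'(z)\big),\qquad T(0)=0,\quad T'(0)=1.
\]
Since this equation determines the coefficients $T_m$ uniquely by recursion, it suffices to exhibit \emph{some} power series satisfying it and to identify its derivative with $Q+Q'$. First I would extract from the implicit definition of $Q$ the differential equation governing it, then build a candidate for $T$ out of $Q$, verify the displayed equation, and finally read off the coefficient identities.

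Differentiating $z=\int_0^{Q(z)}\frac{dt}{2e^t-t-1}$ with respect to $z$ gives $1=Q'(z)/(2e^{Q(z)}-Q(z)-1)$, hence
\[
Q'(z)=2e^{Q(z)}-Q(z)-1,\qquad Q(0)=0.
\]
The integrand is analytic with value $1$ at $t=0$, so $Q$ is a well-defined power series with $Q_1=Q'(0)=1$. Writing $U:=Q+Q'=2e^{Q}-1$, I note $U(0)=1$, matching the value $T'(0)$, and set $T(z):=\int_0^z U(u)\,du$, so that $T(0)=0$ and $T'=U=Q+Q'$ hold by construction.

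The crux is the identity $e^{T}=Q'$. Differentiating the equation for $Q'$ yields $Q''=Q'(2e^{Q}-1)=Q'\,U$, so $(\ln Q')'=U=T'$; since $\ln Q'(0)=\ln 1=0=T(0)$, integration gives $\ln Q'=T$, that is $e^{T}=Q'$. Consequently $T''=U'=Q'+Q''=Q'(1+U)=e^{T}(1+T')$, which is exactly the specialized equation, so by uniqueness the constructed $T$ is the sought generating function. This proves $T'=Q+Q'$, and comparing the coefficients of $z^{m-1}/(m-1)!$ on both sides yields $T_m=Q_m+Q_{m-1}$. I expect the identity $e^T=Q'$ to be the only genuinely non-mechanical step; everything else is bookkeeping.

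Finally, the recurrence for $(Q_m)$ follows from the same relation $Q''=Q'\,(Q+Q')$ used above. Interpreting this as a product of exponential generating functions and taking the binomial convolution of the coefficient sequences $(Q_{k+1})$ of $Q'$ and $(Q_j+Q_{j+1})$ of $Q+Q'$ gives $Q_{m+2}=\sum_{k=0}^m\binom{m}{k}Q_{k+1}(Q_{m-k}+Q_{m-k+1})$; re-indexing $k\mapsto m-k$ turns this into the stated symmetric form, with the initial data $Q_0=0$ and $Q_1=Q'(0)=1$ coming directly from the construction of $Q$.
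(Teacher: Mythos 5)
Your proof is correct, but it runs in the opposite direction from the paper's. The paper starts from the equation of Proposition~\ref{prop:UniBi}, observes that it can be rewritten as $(T'-\varphi(T))'=\varphi(T)$, and \emph{defines} $Q:=T'-\varphi(T)$; the relation $T'=Q+Q'$ is then immediate, and the real work is to identify $Q$: differentiating $Q'=e^{T}$ gives the autonomous equation $Q''=Q'(Q+Q')$, which the paper solves by the reduction $f(Q):=Q'(z)$, leading to the linear equation $f'(Q)=f(Q)+Q$, its general solution $f(Q)=ce^{Q}-Q-1$, the value $c=2$ from the initial conditions, and finally the implicit integral form by separation of variables. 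You instead take the implicitly defined $Q$ as given, read off $Q'=2e^{Q}-Q-1$, build the candidate $T:=\int_0^z\bigl(Q(u)+Q'(u)\bigr)\,du$, prove the key identity $e^{T}=Q'$ by integrating $(\ln Q')'=Q''/Q'=Q+Q'=T'$, verify the specialized equation $T''=e^{T}(1+T')$, and conclude by uniqueness of the formal power series solution with the given initial conditions. The two arguments share all the intermediate identities ($Q'=e^{T}$, $T'=Q+Q'$, $Q''=Q'(Q+Q')$), but yours is a pure verification: it needs no ODE-solving (no reduction of order, no general solution, no constant to fix), at the price of invoking uniqueness of the recursively determined coefficients --- which you correctly note --- and of concealing where $Q$ and its integral representation come from; the paper's derivation is constructive and shows how one discovers them. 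Both exploit the same special feature of $\varphi(t)=e^{t}$, namely $\varphi'=\varphi$, which is what makes your step $Q''/Q'=T'$ and the paper's step $\varphi'(T)=Q'$ work; the coefficient extractions at the end are identical.
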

\begin{proof}
  According to Proposition~\ref{prop:UniBi}, the generating function $T(z)$ satisfies a second order differential equation, which can be rewritten as follows:
	\begin{equation}\label{eqn:DEQ2UniBi}
	  (T'(z)-\varphi(T(z)))' = \varphi(T(z)).
	\end{equation}
	Setting
	\begin{equation*}
	  Q(z) := T'(z) - \varphi(T(z)),
	\end{equation*}
	equation~\eqref{eqn:DEQ2UniBi} gives 
	\begin{equation}\label{eqn:QzTz}
	  Q'(z) = \varphi(T(z)),
	\end{equation}
	which shows the stated relation between $T(z)$ and $Q(z)$:
	\begin{equation}\label{eqn:TzQz}
	  T'(z) = Q(z) + Q'(z).
	\end{equation}
	Differentiating \eqref{eqn:QzTz} and using $\varphi(t) = e^{t}$ yields the following differential equation for $Q(z)$, with initial conditions $Q(0) = 0$ and $Q'(0) = 1$:
	\begin{equation}\label{eqn:DEQ2Qz}
	  Q''(z) = \varphi'(T(z)) (Q(z) + Q'(z)) = Q'(z) \left(Q(z) + Q'(z)\right).
	\end{equation}
	This second order autonomous differential equation can be treated in a standard way by introducing the function $f(Q) := Q'(z)$, which gives the following first order linear differential equation for $f(Q)$:
	\begin{equation*}
	  f'(Q) = f(Q) + Q.
	\end{equation*}
	This differential equation has the following general solution:
	\begin{equation*}
	  f(Q) = c e^{Q} -Q - 1,
	\end{equation*}
	with an arbitrary constant $c$. Thus, we get
	\begin{equation*}
	  Q'(z) = c e^{Q(z)} - Q(z) - 1,
	\end{equation*}
	and adapting to the initial conditions $Q(0)=0$ and $Q'(0)=1$ characterizes the constant as $c=2$. Therefore, $Q(z)$ satisfies the following first order differential equation:
	\begin{equation}\label{eqn:DEQ1Qz}
	  Q'(z) = 2 e^{Q(z)} - Q(z) -1, \quad Q(0)=0.
	\end{equation}
	Solving \eqref{eqn:DEQ1Qz} by separating variables and integrating leads to the implicit characterization of $Q(z)$ stated in the theorem.
	
	\smallskip
	
	Extracting coefficients from relation \eqref{eqn:TzQz} immediately yields the connection between the numbers $T_{m}$ and $Q_{m}$; furthermore, the recurrence relation for $Q_{m}$ follows by extracting coefficients from differential equation~\eqref{eqn:DEQ2Qz}.
\end{proof}
\begin{remark}
The sequences $(Q_{m})$ and $(T_{m})$ occurring in above theorem begin with
\begin{align*}
  (Q_{m})_{m \ge 1} & = (1, 1, 3, 11, 55, 337, 2469,\dots),\\
  (T_{m})_{m \ge 1} & = (1, 2, 4, 14, 66, 392, 2806,\dots);
\end{align*}
currently, both sequences do not appear in the OEIS.
\end{remark}

According to differential equation \eqref{eqn:DEQ1Qz} for $Q(z)$, the numbers $Q_{m}$ count trees of size $m$ of a certain family $\widehat{\mathcal{Q}}$ of unilabelled increasing trees, i.e., of the one associated to the degree-weight generating function $\varphi(t) = 2 e^{t} - t -1$.
Combinatorially, such trees can be interpreted as unordered increasing trees, where each node of out-degree $\ge 2$ could be coloured either black or white (whereas nodes of out-degree zero or one are always coloured white). Next we give a combinatorial proof of the relation $T_{m} = Q_{m} + Q_{m-1}$ stated in Theorem~\ref{thm:UnorderedUniBi}.
\begin{theorem}\label{thm:TQbij}
  There is a bijection between the set $\widehat{\mathcal{T}}(m)$ of unordered unilabelled-bilabelled increasing trees with $m$ labels and the set $\widehat{\mathcal{Q}}(m) \: \dot{\cup} \: \widehat{\mathcal{Q}}(m-1)$ of unordered increasing trees of size $m$ or $m-1$, where each node of out-degree $\ge 2$ could be coloured either black or white.
\end{theorem}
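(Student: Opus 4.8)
The plan is to realize the numerical identity $T_m=Q_m+Q_{m-1}$ of Theorem~\ref{thm:UnorderedUniBi} by a structural bijection, exploiting a fact already visible in that proof: since $Q'(z)=\varphi(T(z))=e^{T(z)}$ with $Q(0)=0$ (see~\eqref{eqn:QzTz}), we have $Q(z)=\int_0^z e^{T(u)}\,du$, so $Q_k$ counts not only the trees of $\widehat{\mathcal{Q}}(k)$ but also the unilabelled-bilabelled increasing trees on $k$ labels whose \emph{root is unilabelled}. First I would split $\widehat{\mathcal{T}}(m)$ according to the label-multiplicity of the root. As the root of an increasing tree carries the smallest labels, it is either the singleton $\{1\}$ or the pair $\{1,2\}$; writing $\mathcal{A}_m$ and $\mathcal{B}_m$ for the two subfamilies, we get $\widehat{\mathcal{T}}(m)=\mathcal{A}_m\,\dot{\cup}\,\mathcal{B}_m$.

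The first, easy, step matches the bilabelled-root trees with the unilabelled-root trees of the next smaller size. Given a tree in $\mathcal{B}_m$ with root $\{1,2\}$, I delete the larger root label $2$ and relabel $3,\dots,m$ order-preservingly by $2,\dots,m-1$; the result lies in $\mathcal{A}_{m-1}$, and the inverse simply inserts a new second-smallest label at the root. This yields a bijection $\mathcal{B}_m\cong\mathcal{A}_{m-1}$ and thereby reduces the theorem to a single bijection $\mathcal{A}_k\cong\widehat{\mathcal{Q}}(k)$, applied once at size $k=m$ (for $\mathcal{A}_m$) and once at $k=m-1$ (for $\mathcal{B}_m$).

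The heart of the argument is therefore a bijection between root-unilabelled unilabelled-bilabelled increasing trees on $k$ labels and the trees of $\widehat{\mathcal{Q}}(k)$, i.e.\ unordered increasing trees whose nodes of out-degree $\ge 2$ carry a black/white colour. I would build it recursively: keep the root $\{1\}$, recurse on the forest hanging below it, and convert each bilabelled node $\{a,b\}$ with $a<b$ into genuine unilabelled nodes by promoting its second label $b$ to a new node, whose presence is flagged through the colour of a nearby branching node. That $\mathcal{A}_k$ and $\widehat{\mathcal{Q}}(k)$ are equinumerous is guaranteed \emph{a priori}, since the generating function $Q(z)$ of $\mathcal{A}$ satisfies~\eqref{eqn:DEQ1Qz}, $Q'(z)=2e^{Q(z)}-Q(z)-1$, which is exactly the first-order differential equation governing the family $\widehat{\mathcal{Q}}$ with degree-weight generating function $\varphi(t)=2e^t-t-1$; this also supplies a correctness check at each size.

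The step I expect to be the main obstacle is making this promotion canonical and invertible. The colouring offers only a single bit, available solely at nodes of out-degree $\ge 2$, whereas the bilabelling is a choice made at \emph{every} node. Naive local recipes break down: turning $\{a,b\}$ into a chain $a\to b$ creates a coloured node of out-degree $1$, which is forbidden and collides with a genuine unilabelled parent of a single child, while promoting $b$ to a sibling loses the information of \emph{which} child carried the extra label as soon as a node has several bilabelled children. The construction must instead locate the promoted label canonically from the increasing order (for instance as a distinguished smallest child) and let the colour record bilabelling coherently across all children at once; checking that the resulting map is well defined on leaves and genuinely bijective is where the real work lies.
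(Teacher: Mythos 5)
Your set-up is sound and in fact coincides with the paper's own first step: splitting $\widehat{\mathcal{T}}(m)$ according to whether the root carries $\{1\}$ or $\{1,2\}$, matching the bilabelled-root case with the unilabelled-root case one size down, and observing via $Q'(z)=e^{T(z)}$ from \eqref{eqn:QzTz} that $Q_k$ counts exactly the root-unilabelled trees, so that everything reduces to a single bijection $\mathcal{A}_k\cong\widehat{\mathcal{Q}}(k)$. But at that point your proposal stops being a proof: you state the plan (``promote the second label of each bilabelled node to a new node, flagged by the colour of a nearby branching node''), list the reasons why the naive versions of this plan fail, and then declare that resolving these failures ``is where the real work lies'' --- without doing that work. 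The counting identity from Theorem~\ref{thm:UnorderedUniBi} and \eqref{eqn:DEQ1Qz} cannot stand in for the missing construction, since the theorem being proved is precisely the assertion that an explicit bijection exists; equinumerosity is already known.

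The missing idea, which is the entire content of the paper's proof, is a splitting rule that simultaneously solves both obstacles you name. Order the children of each node $v$ so that their smallest labels increase, and let $v_p=\{\ell^{[1]},\ell^{[2]}\}$ be the \emph{first} bilabelled child, with earlier siblings $v_1,\dots,v_{p-1}$ unilabelled and later siblings carrying the subtrees $T_{p+1},\dots,T_r$. Replace $v_p$ by two children $v_p^{[1]},v_p^{[2]}$ of $v$ labelled $\ell^{[1]},\ell^{[2]}$, attach the original subtrees of $v_p$ below $v_p^{[2]}$, and --- this is the crucial move --- reattach the \emph{remaining siblings} $T_{p+1},\dots,T_r$ below $v_p^{[1]}$; then colour $v$ black and recurse on $v_1,\dots,v_{p-1},v_p^{[1]},v_p^{[2]}$. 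This guarantees that a black node always has out-degree $p+1\ge 2$ (so the colouring constraint is respected, answering your chain objection), that the split pair is always the two rightmost children of the black node (so the map is invertible by merging them back and restoring their subtrees, answering your ``which child'' objection), and that any further bilabelled children of $v$ are pushed down a level and handled by the recursion at $v_p^{[1]}$, so a single colour bit per node suffices even when several children are bilabelled. Increasingness is preserved because every label in $T_{p+1},\dots,T_r$ exceeds $\ell^{[1]}$. Without this pushdown mechanism your recursion has no well-defined, invertible elementary step, so the proposal as written does not establish the theorem.
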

\begin{proof}
  Consider a tree $T \in \widehat{\mathcal{T}}(m)$; if the root of $T$ has one label (i.e., it is labelled by $1$), then the recursive procedure below will map $T$ to a tree $Q \in \widehat{\mathcal{Q}}(m)$, but if the root of $T$ has two labels (i.e., it is labelled by $\{1, 2\}$), $T$ will be mapped to a tree $Q \in \widehat{\mathcal{Q}}(m-1)$. In the latter case we first remove label $1$ from the root of $T$ (thus keeping only label $2$), yielding a tree $T'$, and then carry out this recursive procedure.
	
	In order to describe the mapping we always assume that the children of any node in the tree are ordered from left to right in that way, such that the smaller labels in the respective nodes are forming an ascending sequence. Now carry out the procedure below starting by examining the root node $v$ of $T$ or $T'$, respectively.
	
	\begin{itemize}
	\item If $v$ has out-degree $0$ then colour $v$ white and return.
	
	\item Otherwise, let $v_{1}, \dots, v_{r}$ be the children of $v$ and $T_{1}, \dots, T_{r}$ be the respective subtrees.
	\begin{itemize}
	\item[$\ast$] If all nodes $v_{1}, \dots, v_{r}$ have only one label then do the following.
	\begin{enumerate}
	  \item Colour $v$ white.
	  \item Carry out this procedure to the nodes $v_{1}, \dots, v_{r}$.
	\end{enumerate}
	
	\item[$\ast$] Otherwise, let $v_{p}$ be the first node (from left to right) having two labels, let us say $\ell^{[1]}$ and $\ell^{[2]}$, whereas $v_{1}, \dots, v_{p-1}$ have only one label. Let us denote by $T_{p}^{[1]}, \dots, T_{p}^{[j]}$ the (possibly empty) subtrees of the node $v_{p}$.
	Do the following.
	\begin{enumerate}
	  \item Split node $v_{p}$, i.e., replace $v_{p}$ by two new nodes $v_{p}^{[1]}$ and $v_{p}^{[2]}$, which are children of $v$; $v_{p}^{[1]}$ and $v_{p}^{[2]}$ will get the labels $\ell^{[1]}$ and $\ell^{[2]}$, respectively. 
		\item Attach to node $v_{p}^{[2]}$ all the subtrees $T_{p}^{[1]}, \dots, T_{p}^{[j]}$ of the original node $v_{p}$, whereas attach to node $v_{p}^{[1]}$ the remaining subtrees $T_{p+1}, \dots, T_{r}$ of $v$.
		\item Colour $v$ black.
		\item	Carry out this procedure to the nodes $v_{1}, \dots, v_{p-1}, v_{p}^{[1]}, v_{p}^{[2]}$.
	\end{enumerate}
	\end{itemize}
	\end{itemize}
	
	It is immediate to see that the resulting tree $Q$ is indeed a member of $\widehat{\mathcal{Q}}(m)$ or $\widehat{\mathcal{Q}}(m-1)$, respectively. Moreover, each black node $v$ in $Q$ means that the two rightmost children of $v$ are obtained by splitting them and above procedure can be inverted easily, yielding the original tree $T$ with $m$ labels or a tree $T'$ with $m-1$ labels; in the latter case the original tree $T$ is obtained from $T'$ by adding label $1$ to the root node. In Figure~\ref{fig:TQbij} above procedure is exemplified.
\end{proof}
\begin{figure}
\begin{center}
  \includegraphics[height=3.5cm]{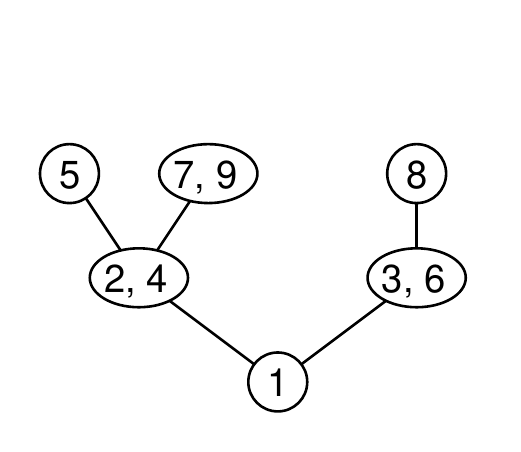} \raisebox{1.8cm}{$\Rightarrow$} \includegraphics[height=3.5cm]{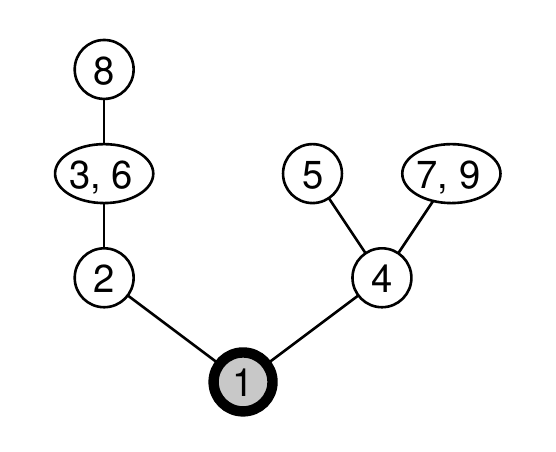} \raisebox{1.8cm}{$\Rightarrow$}	\includegraphics[height=3.5cm]{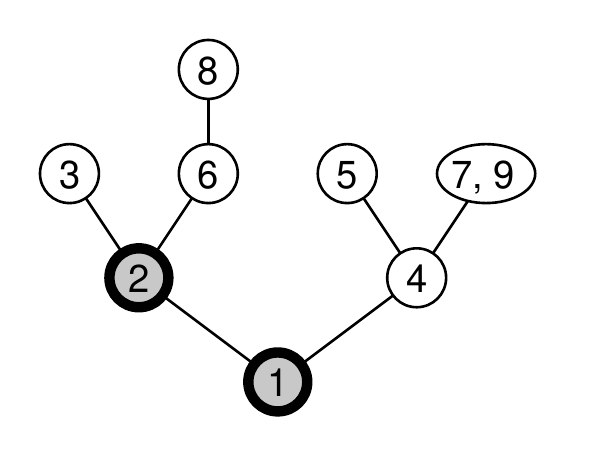} \raisebox{1.8cm}{$\Rightarrow$} \includegraphics[height=3.5cm]{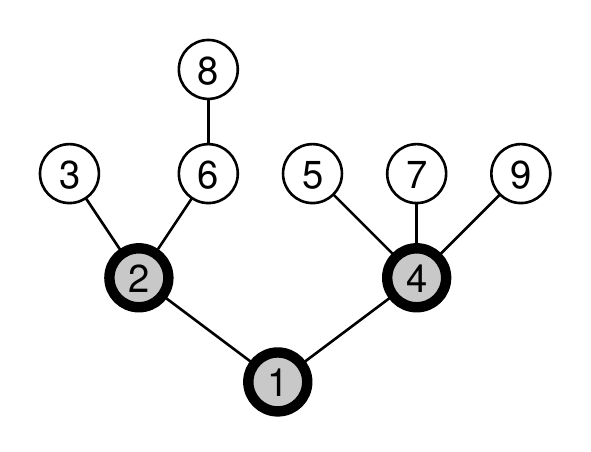}
\end{center}
\caption{An unordered unilabelled-bilabelled increasing tree $T$ with $9$ labels and the corresponding unordered increasing tree $Q$ of size $9$, where nodes of out-degree $\ge 2$ are coloured either black or white, obtained by the procedure given in Theorem~\ref{thm:TQbij}.\label{fig:TQbij}}
\end{figure}

\subsection{Unilabelled-bilabelled increasing trees and hook-length formul\ae{}}
The considerations made in Section~\ref{ssec:FreeMultHook} yielding relations between free multilabelled increasing trees and hook-length formul\ae{} can be carried over to unilabelled-bilabelled increasing trees easily, where one just has to take into account that the bucket-size $b(v)$ of any node $v$ in a tree can be only one or two. In particular, we get the following result.
\begin{prop}
Given a family $\widehat{\mathcal{T}}$ of increasingly unilabelled-bilabelled weighted ordered trees with degree-weight generating function $\varphi(t) = \sum_{j \ge 0} \varphi_{j} t^{j}$, let us denote by $T_{m}$ the number of trees of $\widehat{\mathcal{T}}$ with $m$ labels.
Then, the family $\mathcal{O}$ of ordered trees satisfies the following hook-length formula:
\begin{equation*}
\sum_{T\in\mathcal{O}} \sum_{\begin{smallmatrix} b \: : \: T \to \{1,2\},\\ \text{with} \; \sum_{v \in T} b(v) = m\end{smallmatrix}} \prod_{v\in T} \left(\frac{\varphi_{\odeg(v)}}{\left(h^{[b]}(v)\right)^{\underline{b(v)}}}\right) = \frac{T_{m}}{m!}.
\end{equation*}
\end{prop}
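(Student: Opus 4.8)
The plan is to mirror exactly the derivation used for the free multilabelled case in Section~\ref{ssec:FreeMultHook}, observing that the only change is the restriction of the admissible bucket-size functions to those taking values in $\{1,2\}$. The starting point is the identification of a tree in the family $\widehat{\mathcal{T}}$ with $m$ labels as a triple consisting of an ordered tree $T \in \mathcal{O}$, its weight $w(T) = \prod_{v \in T} \varphi_{\odeg(v)}$, and an increasing multilabelling of $T$ with $m$ labels in which each node carries either one or two labels. Encoding the number of labels per node by a bucket-size function $b : T \to \{1,2\}$ with $\sum_{v \in T} b(v) = m$, I would write the total weight $T_{m}$ as the sum, over all ordered trees and all such $b$, of $w(T)$ times the number of compatible increasing multilabellings of $T$.

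The key ingredient is Lemma~\ref{HookBijMulti}, which gives the number of increasing multilabellings of $T$ with prescribed bucket-sizes $b$ as $m!/\prod_{v \in T}\left(h^{[b]}(v)\right)^{\underline{b(v)}}$. Although that lemma is stated for an arbitrary bucket-size function $b : T \to \mathbb{N}_{\ge 1}$, restricting its range to $\{1,2\}$ requires no modification of its proof; it merely limits which functions $b$ occur in the outer sum. Substituting this count, I would obtain
\[
T_{m} = \sum_{T \in \mathcal{O}} \sum_{\substack{b : T \to \{1,2\}\\ \sum_{v \in T} b(v) = m}} \prod_{v \in T} \varphi_{\odeg(v)} \cdot \frac{m!}{\prod_{v \in T}\left(h^{[b]}(v)\right)^{\underline{b(v)}}},
\]
and dividing by $m!$ and merging the two products over the nodes of $T$ yields precisely the asserted hook-length formula.

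A minor point to address is the finiteness of the outer sum over $\mathcal{O}$: since every node holds one or two labels, only ordered trees $T$ with $\lceil m/2\rceil \le |T| \le m$ admit a bucket-size function with total $m$, so for each fixed $m$ the double sum has finitely many nonzero terms and no analytic issues arise. I do not expect any genuine obstacle here: the statement is an immediate specialization of the free multilabelled hook-length formula established in Section~\ref{ssec:FreeMultHook}, and the entire content of the argument is the bookkeeping that matches the weighted count $T_{m}$ of trees in $\widehat{\mathcal{T}}$ against the sum over ordered trees equipped with two-valued bucket-size functions.
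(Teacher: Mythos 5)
Your proposal is correct and follows exactly the route the paper intends: the paper states this proposition without a separate proof, remarking only that the free multilabelled argument of Section~\ref{ssec:FreeMultHook} carries over once bucket-size functions are restricted to values in $\{1,2\}$, which is precisely your argument via Lemma~\ref{HookBijMulti}. Your additional observations (that the lemma's inductive proof is unaffected by restricting the range of $b$, and that the outer sum is finite) are accurate and fill in the bookkeeping the paper leaves implicit.
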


\section{\texorpdfstring{$k$}{k}-tuple labelled increasing trees}

We consider shortly another concept of increasing multilabellings of trees, where the nodes in the tree get $k$-tuples of labels, such that the $j$-th component of a child node is always larger than the $j$-th component of its parent node, for all $1 \le j \le k$.
Alternatively, we can interpret each $k$-tuple labelled increasing tree as a tree, to which a sequence of $k$ increasing (uni)labellings is associated. The particular instance $k=2$, called \emph{double increasing trees}, has been introduced by the authors in \cite{KubPan2012}, again in the context of combinatorial interpretations of hook-length formul\ae{}. Here we present for $k$-tuple labelled increasing tree families differential equations for a suitable generating function of the number of trees of size $n$ as well as relations to hook-length formul\ae{} for ordered trees. We further note, that it is possible to derive concrete hook-length formulas using the reverse-engineering approach presented in Section~\ref{sec:ReverseEngineering}.

\smallskip

We call a tree $T$ a \emph{$k$-tuple labelled} tree, if each node $v \in T$ has got an ordered $k$-tuple $\ell_{D}(v)= (\ell^{[1]}(v),\dots,\ell^{[k]}(v))$ of integers (we may speak about the \ith{j} label 
of $v$) such that the \ith{j} labels of two different nodes $v \neq w$ are different, $1\le j \le k$.
We say that $T$ is a $k$-tuple labelled tree with label sets $\mathcal{M}_{j} = \mathcal{M}_{j}(T) = \bigcup_{v \in T} \ell^{[j]}(v)$, $1\le j\le k$, respectively; of course, 
$|\mathcal{M}_{j}| = n$, for a tree $T$ of size $|T|=n$. A $k$-tuple labelled tree $T$ is called \emph{increasing}, 
if it holds that the \ith{j} label of a child node is always larger than the \ith{j} label of its parent node:
$\ell_{D}(v) \prec \ell_{D}(w)$, whenever $w$ is a child of $v$, where we use the relation $(a^{[1]},\dots,a^{[k]}) \prec (b^{[1]},\dots,b^{[k]}) \Longleftrightarrow \big(a^{[j]} < b^{[j]}$, $1\le j \le k$\big). In Figure~\ref{fig:ktuplelabelled} we give an example of a $3$-tuple labelled increasing tree.
\begin{figure}
\begin{center}
  \includegraphics[height=3.5cm]{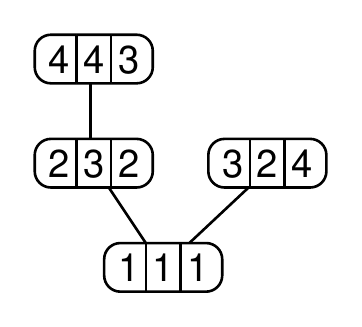}
\end{center}
\caption{A $3$-tuple labelled increasing tree of size $4$.\label{fig:ktuplelabelled}}
\end{figure}

\smallskip 

We denote by $\widehat{\mathcal{T}}_{k}$ the family of \emph{increasingly $k$-tuple labelled weighted ordered trees},
which contains all (non-empty) $k$-tuple increasingly labelled ordered trees $T \in \mathcal{O}$ of size $|T| \ge 1$ with label sets $\mathcal{M}_{i} = \{1, 2, \dots, |T|\}$ and degree-weight generating function $\varphi(t)$.
First, we derive a differential equation for the generating function $T(z)=\sum_{n\ge 1}T_n \frac{z^n}{(n!)^k}$.

\begin{prop}
The generating function $T(z)$ of the number $T_{n}$ of $k$-tuple labelled weighted ordered trees of size $n$ with degree-weight generating function $\varphi(t)$ satisfies the following differential equation:
 \[
 \frac1z \Theta_z^k \left(T(z)\right) = \varphi\left(T(z)\right),\qquad T^{\ell}(0)=\frac{T_\ell}{\ell!^k},\quad 0\le \ell \le k-1,
 \]
with differential operator $\Theta_z := z D_{z}$.
\end{prop}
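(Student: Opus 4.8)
The plan is to first establish a combinatorial recurrence for the weighted tree counts $T_n$ and then to recognise that raising the multinomial coefficient to the $k$-th power is captured precisely by the operator $\Theta_z=zD_z$, whose characteristic property is that it acts diagonally on monomials, $\Theta_z z^n=n z^n$, so that $\Theta_z^k z^n=n^k z^n$.

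To set up the recurrence I would decompose a tree at its root. In each of the $k$ coordinates the labels form an increasing (uni)labelling with label set $[n]$, so the root is forced to carry the label $1$ in every coordinate; its $k$-tuple is $(1,\dots,1)$. If the root has out-degree $r$ (contributing the weight factor $\varphi_r$) and its subtrees have sizes $s_1,\dots,s_r$ with $s_1+\cdots+s_r=n-1$, then in each coordinate the remaining $n-1$ labels must be split among the subtrees, which can be done in $\binom{n-1}{s_1,\dots,s_r}$ ways; since the coordinates are labelled independently this contributes the factor $\binom{n-1}{s_1,\dots,s_r}^{k}$. As each subtree becomes, after an order-preserving relabelling, a $k$-tuple labelled increasing tree of its own, I obtain
\[
T_n=\sum_{r\ge 0}\varphi_r\sum_{s_1+\cdots+s_r=n-1}\binom{n-1}{s_1,\dots,s_r}^{k}T_{s_1}\cdots T_{s_r},\qquad n\ge 1.
\]

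The decisive step is to divide this identity by $((n-1)!)^k$. Writing the multinomial coefficient out, the right-hand side becomes $\sum_{r\ge 0}\varphi_r\sum_{s_1+\cdots+s_r=n-1}\prod_i \tfrac{T_{s_i}}{(s_i!)^{k}}$, which is exactly $[z^{n-1}]\varphi(T(z))$, since $T(z)^r=\sum_n\big(\sum_{s_1+\cdots+s_r=n}\prod_i \tfrac{T_{s_i}}{(s_i!)^{k}}\big)z^n$. For the left-hand side the eigenvalue action of $\Theta_z$ gives $\Theta_z^k T(z)=\sum_{n\ge 1}\tfrac{T_n n^k}{(n!)^{k}}z^n=\sum_{n\ge 1}\tfrac{T_n}{((n-1)!)^{k}}z^n$, whence $\tfrac1z\Theta_z^k T(z)=\sum_{n\ge 1}\tfrac{T_n}{((n-1)!)^{k}}z^{n-1}$. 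Comparing the coefficient of $z^{n-1}$ on both sides for every $n\ge 1$ then yields the asserted equation $\tfrac1z\Theta_z^k T(z)=\varphi(T(z))$.

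It remains to pin down the initial data. Expanding $\Theta_z^k=(zD_z)^k=\sum_{j=0}^{k}S(k,j)\,z^jD_z^j$ with Stirling numbers of the second kind $S(k,j)$ shows that, after clearing the factor $1/z$, the identity is a genuine $k$-th order differential equation whose leading coefficient $z^{k-1}$ degenerates at the origin. Because of this singularity the solution analytic at $z=0$ is singled out by prescribing its first $k$ Taylor coefficients, which are read off directly from $T(z)=\sum_{n\ge1}T_n z^n/(n!)^{k}$ and are governed by the small trees $T_0,\dots,T_{k-1}$. I expect this last point to be the main obstacle: stating the correct boundary conditions for the singular operator $\Theta_z^k$ and verifying that prescribing the low-order coefficients really selects the combinatorial solution is the delicate part, whereas the passage from the recurrence to the generating-function equation is routine once the diagonal action of $\Theta_z$ has been identified.
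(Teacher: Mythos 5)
Your proposal is correct and follows essentially the same route as the paper: the root decomposition giving the recurrence $T_n=\sum_{r}\varphi_r\sum_{s_1+\cdots+s_r=n-1}\binom{n-1}{s_1,\dots,s_r}^{k}T_{s_1}\cdots T_{s_r}$ is exactly the paper's, and your explicit translation via the diagonal action $\Theta_z^k z^n=n^k z^n$ (dividing by $((n-1)!)^k$ and comparing coefficients) is precisely the step the paper dismisses as ``straightforward.'' Your closing worry about the singular operator is unnecessary for the proposition as stated, since it only asserts that the combinatorially defined $T(z)$ satisfies the equation and has the prescribed low-order coefficients, both of which you have verified.
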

\begin{proof}
We use the decomposition of a tree $T \in \widehat{\mathcal{T}}_{k}$ of size $n \ge 2$
into the root node and its subtrees. Let us assume that the out-degree of the root of $T$
is $r \ge 1$. After an order preserving relabelling the subtrees $T_{1}, \dots, T_{r}$
are itself increasingly $k$-tuple labelled weighted ordered trees
of certain sizes $s_{1}, \dots, s_{r}$. Since the root of $T$ is always labelled by
$(1,\dots,1)$ and the remaining labels are distributed over the
nodes of $T_{1}, \dots, T_{r}$ we obtain the following recurrence for the numbers
$T_{n}$, with $T_{1}=1$:
\begin{equation}\label{eqnc2}
  T_{n} = \sum_{r \ge 1} \varphi_r \sum_{s_{1}+\cdots+s_{r}=n-1}
  \binom{n-1}{s_{1}, s_{2}, \dots, s_{r}}^k   \cdot T_{s_{1}} \cdot T_{s_{2}} \cdots T_{s_{r}}, \quad
  n \ge 2.
\end{equation}
Note that the factor $\varphi_r$ is appearing, since we are considering weighted ordered trees.
Translating this recurrence into a differential equation for the generating function $T(z)$ is straightforward and yields the stated result.
\end{proof}

To get a connection to hook-length formul\ae{} we only have to take into account that the number $|\widetilde{\mathcal{L}}^{[k]}(T)|$ of different increasing $k$-tuple labellings of a given tree $T$ of size $n$ with distinguishable nodes is given by
\begin{equation*}
  |\widetilde{\mathcal{L}}^{[k]}(T)| = |\widetilde{\mathcal{L}}^{[1]}(T)|^{k} = \frac{(n!)^{k}}{\prod_{v \in T} \big(h_{v}^{k}\big)},
\end{equation*}
where $|\widetilde{\mathcal{L}}^{[1]}(T)| = |\mathcal{L}^{[1]}(T)|$ is well-known, see Lemma~\ref{HookBijOutlookLem1}.

\begin{prop}
\label{HookBijOutlookLem2}
Given a family $\widehat{\mathcal{T}}_{k}$ of increasingly $k$-tuple labelled weighted ordered trees with degree-weight generating function $\varphi(t)$.
Then, the family $\mathcal{O}$ of ordered trees satisfies the following hook-length formula:
\[
\sum_{T\in\mathcal{O}(n)}\prod_{v\in T} \left(\frac{\varphi_{\odeg(v)}}{h_v^k} \right) = \frac{T_n}{(n!)^k}.
\]
\end{prop}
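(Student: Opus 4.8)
The plan is to follow exactly the pattern used for Theorem~\ref{HookBijIITheHook} and its $k$-labelled analogue Theorem~\ref{kHookFormula}, since the enumeration of labellings needed here has already been recorded immediately before the statement. First I would recall the meaning of $T_n$ in the weighted-tree setting: $T_n$ is the \emph{total weight} of all size-$n$ trees in $\widehat{\mathcal{T}}_k$, which by the combinatorial description as weighted ordered trees carrying increasing $k$-tuple labellings factors as a sum over the underlying ordered shapes,
\[
T_n = \sum_{T \in \mathcal{O}(n)} w(T) \cdot |\widetilde{\mathcal{L}}^{[k]}(T)|, \qquad w(T) = \prod_{v \in T} \varphi_{\odeg(v)},
\]
where $w(T)$ is the degree-weight attached to the ordered shape $T$ and $|\widetilde{\mathcal{L}}^{[k]}(T)|$ counts the increasing $k$-tuple labellings that shape admits (the latter being the product of $k$ independent increasing unilabellings, hence the $k$-th power of the count in Lemma~\ref{HookBijOutlookLem1}).

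Next I would substitute the product formula stated just above the proposition,
\[
|\widetilde{\mathcal{L}}^{[k]}(T)| = \frac{(n!)^k}{\prod_{v \in T} h_v^k},
\]
into the sum. This yields
\[
T_n = \sum_{T \in \mathcal{O}(n)} \left(\prod_{v \in T} \varphi_{\odeg(v)}\right) \frac{(n!)^k}{\prod_{v \in T} h_v^k} = (n!)^k \sum_{T \in \mathcal{O}(n)} \prod_{v \in T} \frac{\varphi_{\odeg(v)}}{h_v^k},
\]
where in the last step the common factor $(n!)^k$ is pulled out and the two products over $v \in T$ are merged. Dividing both sides by $(n!)^k$ then produces precisely the asserted hook-length identity, and the proof is complete.

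The step that I expect to require the most attention is not any calculation but the conceptual bookkeeping of the weighted model: one must be sure that $T_n$ is interpreted as the total weight rather than a bare cardinality, so that the factorisation over shapes $T \in \mathcal{O}(n)$ is legitimate, with the degree-weight $\varphi_{\odeg(v)}$ belonging to the shape while the factor $(n!)^k / \prod_v h_v^k$ records the labelling count independently of the weights. Once this separation is made explicit, the statement is a one-line consequence of the product formula, in complete analogy with the bilabelled case treated in Theorem~\ref{HookBijIITheHook}.
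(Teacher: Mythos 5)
Your proposal is correct and follows exactly the paper's own (largely implicit) argument: the paper likewise writes $T_n$ as the sum over ordered shapes of $w(T)\cdot|\widetilde{\mathcal{L}}^{[k]}(T)|$, substitutes the product formula $|\widetilde{\mathcal{L}}^{[k]}(T)| = (n!)^k/\prod_{v\in T} h_v^k$ (the $k$-th power of the $k=1$ instance of Lemma~\ref{HookBijOutlookLem1}), and divides by $(n!)^k$, mirroring the bilabelled case of Theorem~\ref{HookBijIITheHook}. Your emphasis on interpreting $T_n$ as a total weight rather than a bare count is precisely the bookkeeping the paper relies on.
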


\section*{Outlook}
We note that using the combinatorial setup presented in this work it is possible to analyze certain tree-shape parameters like the root degree; this will be discussed elsewhere.


\begin{thebibliography}{00}

\bibitem{AbramowitzStegun1972}
M.~Abramowitz and I.~A.~Stegun, \emph{Handbook of Mathematical Functions with Formulas, Graphs, and Mathematical Tables}, New York: Dover Publications, 1972.

\bibitem{BerFlaSal1992}
F.~Bergeron, P.~Flajolet and B.~Salvy, Varieties of increasing trees,
\emph{Lecture Notes in Computer Science} 581, 24--48, 1992.

\bibitem{Blasius1908}
H.~Blasius, Grenzschichten in Fl\"ussigkeiten mit kleiner Reibung, \emph{Z. Math. Phys.} 56, 1--37, 1908. 
English translation available at \url{http://naca.central.cranfield.ac.uk/reports/1950/naca-tm-1256.pdf}.

\bibitem{Boutt2003II}
J.~Bouttier, P.~Di Francesco and E.~Guitter, Random trees between two walls: Exact partition function,
\emph{Journal of Physics. A. Mathematical and Theoretical} 36, 12349--12366, 2003.

\bibitem{BlasiusBoyd1999}
J.~P.~Boyd, The Blasius function in the complex plane, \emph{Experimental Mathematics} 8 (4), 381--394, 1999.

\bibitem{Chen2009}
W.~Y.~C.~Chen, O.~X.~Q.~Gao, and P.~L.~Guo, Hook length formulas for trees by Han's expansion.
\emph{Electronic Journal of Combinatorics} 16, \#R62, 2009.

\bibitem{CheYan2008}
W.~Y.~C.~Chen and L.~L.~M.~Yang, On Postnikov's hook length formula for binary trees.
\emph{European Journal of Combinatorics} 29, 1563--1565, 2008.

\bibitem{Don1975}
R.~Donaghey, Alternating permutations and binary increasing trees,
\emph{Journal of Combinatorial Theory, Series A} 18, 141--148, 1975.

\bibitem{Drmota2013}
M.~Drmota, Embedded trees and the support of the ISE, \emph{European Journal of Combinatorics} 34, 123--137, 2013. 

\bibitem{Dumont1979}
D.~Dumont, A combinatorial interpretation for the Schett recurrence on the Jacobian elliptic functions, \emph{Mathematics of Computation} 33, 1293-1297, 1979.

\bibitem{Dumont1981}
D.~Dumont, Une approche combinatoire des fonctions elliptiques de Jacobi, \emph{Advances in Mathematics} 1, 1--39, 1981.

\bibitem{BlasiusFinch2008}
S.~Finch, Prandtl-Blasius Flow, manuscript, 2008. Online available at \url{www.people.fas.harvard.edu/~sfinch/csolve/bla.pdf}

\bibitem{Flajo1981}
P.~Flajolet, Combinatorial aspects of continued fractions, \emph{Discrete Mathematics} 32, 125--161, 1980.

\bibitem{FlaFran1989}
P.~Flajolet and J.~Fran\c{c}con, Elliptic functions, continued fractions and doubled permutations, \emph{European Journal of Combinatorics} 10, 235--241, 1989.

\bibitem{FlaGabPek2005}
P.~Flajolet, J.~Gabarr{\'{o}} and H.~Pekari, Analytic urns,
\emph{Annals of Probability} 33, 1200--1233, 2005.
1989.


\bibitem{FlaSed2009}
P.~Flajolet and R.~Sedgewick, \emph{Analytic combinatorics},
Cambridge University Press, Cambridge, 2009.

\bibitem{FoaHan2010}
D.~Foata and G.-N.~Han, The doubloon polynomial triangle.
\emph{The Ramanujan Journal} 23, 107--126, 2010.

\bibitem{Fra1976}
J.~Fran\c{c}on, Arbres binaires de recherche: Propri\'{e}t\'{e}s combinatoires et applications. \emph{RAIRO Informatique Th\'{e}orique et Applications} 10, 35--50, 1976.

\bibitem{GesSeo2006}
I.~Gessel and S.~Seo, A refinement of Cayley's formula for trees, \emph{Electronic Journal of Combinatorics} 11(2), \#R27, 2006.

\bibitem{BlasiusHager2003}
W.~H.~Hager, Blasius: A life in research and education, \emph{Experiments in Fluids} 34, 566--571, 2003.


\bibitem{Han2008}
G.-N.~Han, Discovering hook length formulas by an expansion technique,
\emph{Electronic Journal of Combinatorics} 15, \#R133, 2008.

\bibitem{JanKubPan2011}
S.~Janson, M.~Kuba and A.~Panholzer, Generalized Stirling permutations, families of increasing trees and urn models,
\emph{Journal of Combinatorial Theory, Series~A} 118, 94--114, 2011.

\bibitem{KoecherKrieg}
M.~Koecher and A.~Krieg, \emph{Elliptische Funktionen und Modulformen}, Springer-Verlag, Berlin, 2007.

\bibitem{KubPan2007}
M.~Kuba and A.~Panholzer, On the degree distribution of the nodes in increasing trees,
\emph{Journal of Combinatorial Theory, Series~A} 114, 597--618, 2007.

\bibitem{KubPan2012}
M.~Kuba and A.~Panholzer, Bilabelled increasing trees and hook-length formulas,
\emph{European Journal of Combinatorics} 33, 248--258, 2012.

\bibitem{KubPan2013}
M.~Kuba and A.~Panholzer, A unifying approach for proving hook-length formulas for weighted
tree families, \emph{Graphs and Combinatorics} 29, 1839--1865, 2013.

\bibitem{KuzPakPos1994}
A.~G.~Kuznetsov, I.~M.~Pak and A.~E.~Postnikov, Increasing trees and alternating permutations,
\emph{Russian Mathematical Surveys} 49(6), 79--114, 1994.

\bibitem{MahSmy1995}
H.~Mahmoud and R.~Smythe, A survey of recursive trees,
\emph{Theoretical Probability and Mathematical Statistics} 51, 1--37, 1995.

\bibitem{PanPro1998}
A.~Panholzer and H.~Prodinger, An analytic approach for the analysis of rotations in fringe-balanced binary search trees, \emph{Annals of Combinatorics} 2, 173--184, 1998.

\bibitem{Par1994}
S.~K.~Park, The $r$-multipermutations, \emph{Journal of Combinatorial Theory, Series A} 67, 44--71, 1994. 

\bibitem{Pos2009}
A.~Postnikov, Permutohedra, associahedra, and beyond.
\emph{International Mathematics Research Notices, IMRN}, no.~6, 1016--1106, 2009.

\bibitem{Pou1989}
C.~Poupard, Deux propri\'{e}t\'{e}s des arbres binaires ordonn\'{e}s stricts.
\emph{European Journal of Combinatorics} 10, 369--374, 1989.

\bibitem{ProUrb1983}
H.~Prodinger and F.~J.~Urbanek, On monotone functions of tree structures, \emph{Discrete Applied Mathematics} 5, 223--239, 1983.

\bibitem{Sloane}
N.~J.~A.~Sloane, \emph{The On-Line En\-cyclopedia of Integer Sequences (OEIS)}, Online availaible at\\ \texttt{www.research.att.com/~njas/sequences/}, 2009.

\bibitem{Sta1986}
R.~Stanley, \emph{Enumerative Combinatorics}, Vol.~I, Wadsworth \& Brooks/Cole, 1986.

\bibitem{Viennot1980}
G.~Viennot, Une interpr\'etation combinatoire des coefficients des d\'eveloppements en s\'erie enti\`ere des fonctions elliptiques de Jacobi, \emph{Journal of Combinatorial Theory, Series A} 29, 121--133, 1980.

\end{thebibliography}
\end{document}